\def\ps@headings{%
\def\@oddhead{\mbox{}\scriptsize\rightmark \hfil \thepage}%
\def\@evenhead{\scriptsize\thepage \hfil \leftmark\mbox{}}%
\def\@oddfoot{}%
\def\@evenfoot{}}
\theoremstyle{plain}\newtheorem{definition}{Definition}
\theoremstyle{plain}\newtheorem{theorem}{Theorem}
\theoremstyle{plain}\newtheorem{lemma}{Lemma}
\theoremstyle{plain}\newtheorem{remark}{Remark}
\theoremstyle{plain}
\theoremstyle{plain}\newtheorem{corollary}{Corollary}
\begin{document}

\title{Tensor Matched Subspace Detection}

\author{Cuiping Li, Xiao-Yang Liu, and Yue Sun,}

\maketitle

\begin{abstract}

The problem of testing whether a signal lies within a given subspace, also named matched subspace detection, has been well studied when the signal is represented as a vector. However, the matched subspace detection methods based on vectors can not be applied to the situations that signals are naturally represented as multi-dimensional data arrays or tensors.
Considering that tensor subspaces and orthogonal projections onto theses subspaces are well defined in recently proposed transform-based tensor model, which motivates us to investigate the problem of matched subspace detection in high dimensional case. In this paper, we propose an approach for tensor matched subspace detection based on the transform-based tensor model with tubal-sampling and elementwise-sampling, respectively. First, we construct estimators based on tubal-sampling and elementwise-sampling to estimate the energy of a signal outside a given subspace of a third-order tensor and then give the probability bounds of our estimators, which show that our estimators work effectively when the sample size is greater than a constant. Secondly, the detectors both for noiseless data and noisy data are given, and the corresponding detection performance analyses are also provided. Finally, based on discrete Fourier transform (DFT) and discrete cosine transform (DCT), the performance of our estimators and detectors are evaluated by several simulations, and simulation results verify the effectiveness of our approach.

\end{abstract}

\begin{keywords}
 Tensor subspace detection, transform-based tensor model, tubal-sampling, elementwise-sampling.
\end{keywords}

\section{Introduction}

In signal processing and big data analysis, testing whether a signal lies in a subspace is an important problem, which arises in a variety of applications, such as learning the column subspace of a matrix from incomplete data \cite{AKAS2013}, subspace clustering or identification with missing data \cite{DBLP2011,Pimentel2015}, shape detection and reconstruction from raw light detection and ranging (LiDAR) data \cite{SD2017}, image subspace representation \cite{SubRe2014}, low-complexity MIMO detection \cite{MIMO2016, MIMO2017}, tensor subspace modeling under adaptive sampling \cite{rf2016,XYL2016ICASSP}, and so on.

The problem of matched subspace detection is challenging due to three factors: 1) in cases such as Internet of Things (IoT) system \cite{8068230}, we can only obtain a data with high loss rate; 2) there is measurement noise in an observed signal; 3) existing representations of signals have limitations. Missing data will increase the difficulty of tensor matched subspace detection, and the presence of measurement noise may lead to erroneous decision. Moreover, existing mathematical models used to model the signal, such as vectors, may lead to the loss of the information of the signal, since the original structure of the signal is destroyed during modeling the signal as a mathematical model. Works for matched subspace detection in \cite{LB1994,Paredes2009,Mcwhorter2001Matched,IOPORT01742117,Balzano2010,Balzano2017,kernel2014,Martin2012} modeled a signal as a vector. However, with the developing of big data, signals can be naturally represented as multi-dimensional data arrays or tensors. When a multi-dimensional data array is represented as a vector, some information, such as the structure information between entries, will loose. Therefore, it is urgent to propose a new method for the problem of matched subspace detection based on multi-dimensional data arrays or tensors.

Tensors, as multi-dimensional modeling tools, have wide applications in signal processing \cite{Baraniuk2009,Cichocki2015,Zhang2017}, and representing a signal as a tensor can reserve more information of the original signal than representing it as a vector, for a second-order or higher-order tensor has more dimensions to describe the signal than a vector. based on the recently proposed transform-based tensor model \cite{XYL2017TSP,TTPRO2015}, a third-order tensor can be viewed as a matrix with tubes as its entries, and be treated as linear operators over the set of second-order tensors. Moreover, we have similar definitions of tensor subspace and the respective orthogonal projection in the transform-based tensor model.  Hence, the methods in \cite{Balzano2010,Balzano2017,kernel2014,Martin2012} can be extended to tensor subspaces.

In this paper, we propose a method for the problem matched subspace detection based on transform-based tensor model, called tensor matched subspace detection, and we can utilize more information of the signal than conventional methods.
First, we construct the estimators with tubal-sampling and elementwise-sampling respectively, aiming at estimating the energy of a signal outside a subspace (also called \textit{residual energy} in statistics) based on the sample. When a signal lies in the subspace, the energy of this signal outside the subspace is zero, then the energy estimated by the estimator based on the sample is also zero, but not vice versa. Secondly, bounds of our estimators are given, which show our estimators can work efficiently when the sample size is slightly than $r$ for tubal-sampling and $r\times n_3$ for elementwise-sampling, where $r$ is the dimension of the subspace. Then, the problem of tensor matched subspace detection is modeled as a binary hypothesis test with the hypotheses, $\mathcal{H}_0:$ the signal lies in the subspace, while $\mathcal{H}_1:$ the signal not lies in the subspace. With the residual energy as test statistics, the detection is given directly in the noiseless case, and for the noisy case, the constant false alarm rate (CFAR) test is made. Finally, based on discrete Fourier transform (DFT) and discrete cosine transform (DCT), our estimators and methods for tensor matched subspace detection are evaluated by corresponding experiments.

The remainder of this paper is organized as follows.  In Section \ref{sec:algebraic}, the transform-based tensor model and the problem statement are given. Then, we construct the estimators and present two theorems which give quantitative bounds on our estimators in Section \ref{sec:theorem}. The detections both with nose and without noise are given in Section \ref{sec:test}. Section \ref{sec:experiment} presents numerical experiments. Finally, Section \ref{sec:conclusion} concludes the paper.

\section{Notations and Problem Statement}
\label{sec:algebraic}

We first introduce the notations and the transform-based tensor model. Then, we formulate the problem of tensor matched subspace detection.

\subsection{Notations}
\label{ssec:natation}

Scalars are denoted by lowercase letters, e.g., $a\in\mathbb{R}$; vectors are denoted by boldface lowercase letters, e.g., $\bm{a}\in\mathbb{R}^n$; matrices are denoted by boldface capital letters, e.g., $\bm{A}\in\mathbb{R}^{m\times n}$; and third-order tensors are denoted by calligraphic letters, i.e., $\mathcal{A}\in \mathbb{R}^{n_1\times n_2\times n_3}$. The transpose of a vector or a matrix is denoted with a superscript $^H$, and the transpose of a third-order tensor is denoted with a superscript $^\dag $. We use $[n]$ to denote the index set $\{1,2,\ldots,n\}$, $[n_1]\times[n_2]$ to denote the set $\{(1,1),(1,2),\ldots,(1,n_2),(2,1),\ldots,(n_1,n_2)\}$, and $[n+m]-[n]$ to denote the set $\{n+1,\ldots,n+m\} $.

The $i$-th element of a vector $\bm{a} $ is $\bm{a}_i$, the $(i,j)$-th element of a matrix $\bm{A} $ is $\bm{A}_{i,j}$ or $\bm{A}(i,j) $, and similarly for third-order tensors $\mathcal{A}$, the $(i,j,k)$-th element is $\mathcal{A}_{i,j,k} $ or $\mathcal{A}(i,j,k)$ .  For a third-order tensor \(\mathcal{A}\in \mathbb{R}^{n_1 \times n_2 \times n_3}\), a tube of $\mathcal{A} $ is defined by fixing all indices but one, while a slice of $\mathcal{A} $ defined by fixing all but two indices. We use $\mathcal{A}(:,j,k) $, $\mathcal{A}(i,:,k) $, $\mathcal{A}(i,j,:) $ to denote mode-1, mode-2, mode-3 tubes of $\mathcal{A} $, and $\mathcal{A}(:,:,k) $, $\mathcal{A}(:,j,:) $, $\mathcal{A}(i,:,:) $ to denote  the frontal, lateral, and horizontal slices of $\mathcal{A} $. $\mathcal{A}(i,:,:) $ and $\mathcal{A}(:,j,:) $ are also called tensor row and tensor column. For easy representation, we use $\mathcal{A}^{(k)} $ to denote $\mathcal{A}(:,:,k) $, and $\mathcal{A}_j $ to denote $\mathcal{A}(:,j,:) $.

For a vector $\bm{a}\in\mathbb{R}^n $, the $\ell_2 $-norm is $\|\bm{a}\|_2= \sqrt{\sum_{i\in[n]}|\bm{a}_i|^2} $, while for a matrix $\bm{A}\in\mathbb{R}^{n_1\times n_2} $, the Frobenius norm is $\|\bm{A}\|_F=\sqrt{\sum_{i=1}^{n_1}\sum_{j=1}^{n_2}\bm{A}(i,j)^2} $, and the spectral-norm $\|\bm{A}\| $ is the largest singular value of $\bm{A} $. For a tensor $\mathcal{A}\in\mathbb{R}^{n_1\times n_2\times n_3}$, the Frobenius norm is $\|\mathcal{A}\|_F=\sqrt{\sum_{i=1}^{n_1}\sum_{j=1}^{n_2}\sum_{k=1}^{n_3}\mathcal{A}(i,j,k)^2} $. For a tensor column $\mathcal{X}\in\mathbb{R}^{n_1\times 1\times n_3} $, we define $\ell_{\infty^*} $-norm as $\|\mathcal{X}\|_{\infty^*}= \max\limits_i\|\mathcal{X}(i,1,:)\|_2 $, and $\ell_{\infty} $-norm as $\|\mathcal{X}\|_\infty=\max\limits_{i,~k} |\mathcal{X}(i,1,k)|$.

For a tube $\bm{a}\in\mathbb{R}^{1\times 1\times n_3} $ and a given linear transform $\mathcal{L}$,
\begin{equation}\label{def:trans}
  \mathcal{L}(\bm{a})(1,1,i)=(\bm{M}\text{vec}(\bm{a}))_i,
\end{equation}
where $\text{vec}(\cdot) $ is the vector representation of a tube, and  $\bm{M}$ is the matrix decided by the transform $\mathcal{L}$.
For a tube $\bm{a}\in\mathbb{R}^{1\times 1\times n_3} $, we have $\|\mathcal{L}(\bm{a})\|_F= c\|\bm{a}\|_F $, where $c$ is a constant and $0<c\leq\|\bm{M}\|$.

\subsection{Transform-based Tensor Model}
\label{ssec:framework}

In order to introduce the definition of $\mathcal{L}$-product, we first introduce the tube multiplication. Given an invertible discrete transform $\mathcal{L}:~\mathbb{R}^{1\times 1\times n_3} \rightarrow \mathbb{R}^{1\times 1\times n_3} $, the elementwise multiplication $\circ $, and $\bm{a},~\bm{b}\in\mathbb{R}^{1\times 1\times n_3} $, the tube multiplication of $\bm{a}$ and $\bm{b}$ is defined as
\begin{equation*}
  \bm{a}\bullet\bm{b}=\mathcal{L}^{-1}(\mathcal{L}(\bm{a})\circ \mathcal{L}(\bm{b})),
\end{equation*}
where $\mathcal{L}^{-1} $ is the inverse of $\mathcal{L} $ \cite{XYL2017TSP}.

\begin{definition}[Tensor product: $\mathcal{L}$-product~\cite{XYL2017TSP}]
The $\mathcal{L} $-product $\mathcal{C}=\mathcal{A}\bullet \mathcal{B}$ of $\mathcal{A}\in\mathbb{R}^{n_1\times n_2\times n_3}$ and $\mathcal{B}\in\mathbb{R}^{n_2\times n_4\times n_3}$ is a tensor of size $n_1\times n_4\times n_3$, with $\mathcal{C}(i,j,:)=\sum\limits_{s=1}^{n_2}\mathcal{A}(i,s,:)\bullet\mathcal{B}(s,j,:)$, for $i\in[n_1]$ and $j\in[n_4]$.
\end{definition}

\textbf{Transform domain representation~\cite{XYL2017TSP}:}
For an invertible discrete transform $\mathcal{L}:~\mathbb{R}^{1\times 1\times n_3} \rightarrow \mathbb{R}^{1\times 1\times n_3} $, let $\mathcal{L}(\mathcal{A})\in \mathbb{R}^{n_1\times n_2\times n_3}$ denote the tensor obtained by taking the transform $\mathcal{L}$ of all the tubes along the third dimension of $\mathcal{A}\in \mathbb{R}^{n_1\times n_2\times n_3} $, i.e., for $i\in [n_1]$ and $j\in [n_2] $, $\mathcal{L}(\mathcal{A})(i,j,:)=\mathcal{L}(\mathcal{A}(i,j,:))$. Furthermore, we use $\mathcal{\overline{A}} $ to denote the block diagonal matrix of the tensor $\mathcal{L}(\mathcal{A}) $ in the transform domain, i.e.,
 \begin{equation*}
   \mathcal{\overline{A}}=\left[
                            \begin{array}{cccc}
                              \mathcal{L}(\mathcal{A})^{(1)} &  &  &  \\
                               & \mathcal{L}(\mathcal{A})^{(2)} &  &  \\
                               &  &\ddots &  \\
                               &  &  & \mathcal{L}(\mathcal{A})^{(n_3)} \\
                            \end{array}
                          \right].
 \end{equation*}

Under the transform-based tensor model, an $n_1\times n_2\times n_3 $ tensor can be viewed as an $n_1\times n_2 $ matrix of tubes that are in the third-dimension, therefore the $ \mathcal{L}$-product of two tensors can be regarded as multiplication of two matrices, expect that the multiplication of two numbers is replaced by the multiplication of two tubes. Owing to the definition of $\mathcal{L}$-product based on the discrete transform, we have the following remark that is used throughout the paper.

\begin{remark}[\cite{XYL2017TSP}]
The $\mathcal{L}$-product $\mathcal{C}=\mathcal{A}\bullet\mathcal{B} $ can be calculated in the following way:
\begin{equation*}
  \mathcal{C}=\mathcal{L}^{-1}(\mathcal{\overline{A}}~\mathcal{\overline{B}}).
\end{equation*}
\end{remark}

Motivated by the definition of t-product in \cite{Kilmer2013} and the cosine transform based product in \cite{TTPRO2015}, we introduce the $\mathcal{L} $-product based on block matrix tools. For tensor $\mathcal{A}\in\mathbb{R}^{n_1\times n_2\times n_3} $, we use $\text{lmat}(\mathcal{A})$ to denote a special structured block matrix determined by the frontal slices of $\mathcal{A} $, such that the $\mathcal{L} $-product $\mathcal{Z}=\mathcal{A}\bullet\mathcal{C} $, where $\mathcal{C}\in \mathbb{R}^{n_2\times 1\times n_3}$ and $\mathcal{Z}\in \mathbb{R}^{n_1\times 1\times n_3} $, can be represented as $\text{unfold}(\mathcal{Z})=\text{lmat}(\mathcal{A})\cdot\text{unfold}(\mathcal{C}) $, Where
\begin{equation*}
  \text{unfold}(\mathcal{A})=\left[\begin{array}{cccc}
                                     \mathcal{A}^{(1)H} & \mathcal{A}^{(2)H} & \cdots & \mathcal{A}^{(n_3)H}
                                   \end{array}
  \right]^H.
\end{equation*}
The form of the block matrix $\text{lmat}(\mathcal{A}) $ varies with the discrete transformation \cite{Kilmer2013,DCT1995,TK2005}. When the transform $\mathcal{L}$ is discrete Fourier transform, $\text{lmat}(\mathcal{A})=\text{bcirc}(\mathcal{A}) $ \cite{Kilmer2013}, where $\text{bcirc}(\cdot) $ is the operation that converts a third order tensor into a block circular matrix, i.e.,
\begin{equation}\label{equal:bcirc}
 \text{bcirc}(\mathcal{A})=\left[
                       \begin{array}{cccc}
                        \mathcal{A}^{(1)} & \mathcal{A}^{(k)} & \cdots & \mathcal{A}^{(2)} \\
                         \mathcal{A}^{(2)} & \mathcal{A}^{(1)} & \cdots & \mathcal{A}^{(3)} \\
                         \vdots & \vdots & \ddots & \vdots \\
                         \mathcal{A}^{(k)} & \mathcal{A}^{(k-1)} & \cdots & \mathcal{A}^{(1)} \\
                       \end{array}
                     \right].
\end{equation}
When the transform $\mathcal{L}$ is discrete cosine transform, $\text{lmat}(\mathcal{A})= ((\bm{I}_{n_3}+\bm{Z}_{n_3})\otimes \bm{I}_{n_1})^{-1}(\bm{T}+\bm{H})((\bm{I}_{n_3}+ \bm{Z}_{n_3})\otimes \bm{I}_{n_2}) $, where $\otimes$ is the Kronecker product \cite{Baraniuk2009,TTPRO2015}, $\bm{I}_{n_i}$ denotes $n_i\times n_i $, $i\in[3] $, identity matrix, $\bm{Z}_{n_3} $ is the $n_3\times n_3 $ circular upshift matrix  as follows
 \begin{equation}\label{equal:Z}
  \bm{Z}_{n_3}=\left[
           \begin{array}{ccccc}
             0 & 1 & 0 & \cdots & 0 \\
             0 & 0 & 1 & \ddots & \vdots \\
             \vdots & \vdots & \ddots & \ddots & 0 \\
             0 & 0 & \cdots & 0 & 1 \\
             0 & 0 & \cdots & 0 & 0 \\
           \end{array}
         \right],
\end{equation}
and $\bm{T}+\bm{H}$ is the following $n_1n_3\times n_2n_3 $ block Toeplitz-plus-Hankel matrix \cite{DCT1995,TK2005,TTPRO2015}
 \begin{eqnarray}\label{equa:thmatrix}
   \bm{T}+\bm{H}&\!\!\!\!\!=\!\!\!\!\!&\left[
                       \begin{array}{cccc}
                        \mathcal{A}^{(1)} & \mathcal{A}^{(2)} & \cdots & \mathcal{A}^{(k)} \\
                         \mathcal{A}^{(2)} & \mathcal{A}^{(1)} & \cdots & \mathcal{A}^{(k-1)} \\
                         \vdots & \vdots & \ddots & \vdots \\
                         \mathcal{A}^{(k)} & \mathcal{A}^{(k-1)} & \cdots & \mathcal{A}^{(1)} \\
                       \end{array}
                     \right]+
     \left[
                       \begin{array}{cccc}
                        \mathcal{A}^{(2)} & \cdots & \mathcal{A}^{(k)}&\bm{0} \\
                         \vdots & \vdots &   & \mathcal{A}^{(k)} \\
                         \mathcal{A}^{(k)} & \bm{0} & \cdots & \vdots \\
                         \bm{0} & \mathcal{A}^{(k)} & \cdots & \mathcal{A}^{(2)} \\
                       \end{array}
                     \right].
 \end{eqnarray}

\begin{definition}[Tensor transpose~\cite{XYL2017TSP,TTPRO2015}]
Let $\mathcal{A}\in\mathbb{R}^{n_1\times n_2\times n_3 }$, then the transpose $\mathcal{A}^{\dag}\in\mathbb{R}^{n_2\times n_1\times n_3} $ is such that $\mathcal{L}(\mathcal{A}^{\dag})^{(i)}=(\mathcal{L}(\mathcal{A})^{(i)})^H$, $i\in[n_3] $.
\end{definition}

The transpose of $\mathcal{A}$ can be obtained by taking the inverse transform of the tensor whose $i$-th frontal slice is $(\mathcal{L}(\mathcal{A})^{(i)})^H$, $i\in[n_3]$, and the multiplication reversal property of the transpose holds \cite{XYL2017TSP,TTPRO2015}, i.e. $(\mathcal{A}\bullet\mathcal{B})^{\dag}=\mathcal{B}^{\dag}\bullet\mathcal{A}^{\dag}$.

\begin{definition}[$\mathcal{L}$-diagonal tensor \cite{rf2016}]
A tensor is called $\mathcal{L}$-diagonal tensor if each frontal slice of the tensor is a diagonal matrix.
\end{definition}

Let $\bm{e}=\mathcal{L}^{-1}(\bm{1})$, where $\bm{1}\in\mathbb{R}^{1\times 1\times n_3} $ denotes a tube of length $n_3$ with all entries equal to $\bm{1}$, and $\bm{e}$ is the multiplicative unity for the tube multiplication \cite{XYL2017TSP}. The multiplicative unity $\bm{e}$ plays a similar role in tensor space as $1$ in vector space.

\begin{definition}[Identity tensor~\cite{XYL2017TSP}]
The identity $\mathcal{I}\in\mathbb{R}^{m\times m\times n_3} $ is an $\mathcal{L}$-diagonal square tensor with $\bm{e}$'s on the main diagonal and zeros elsewhere, i.e., $\mathcal{I}(i,i,:)=\bm{e} $ for $i\in[n_3] $, where all other tubes $0$'s.
\end{definition}

A square tensor $\mathcal{A}\in\mathbb{R}^{m\times m\times n_3}$ is invertible if there exists a tensor $\mathcal{A}^{-1}\in\mathbb{R}^{m\times m\times n_3}$ such that $\mathcal{A}\bullet\mathcal{A}^{-1}=\mathcal{A}^{-1}\bullet\mathcal{A} =\mathcal{I}$ \cite{XYL2017TSP}. Moreover, $\mathcal{A}$ is $\mathcal{L}$-orthogonal, if $\mathcal{A}^{\dag}\bullet\mathcal{A}= \mathcal{A}\bullet\mathcal{A}^{\dag}=\mathcal{I} $ \cite{XYL2017TSP}.

\begin{definition}[$\mathcal{L} $-SVD \cite{XYL2017TSP}]
The $\mathcal{L} $-SVD of $ \mathcal{A}\in\mathbb{R}^{n_1\times n_2\times n_3}$ is given by $\mathcal{A}=\mathcal{U}\bullet\Sigma\bullet\mathcal{V}^{\dagger} $, where $\mathcal{U}$ and $\mathcal{V}$ are $\mathcal{L}$-orthogonal tensors of size $n_1\times n_1\times n_3$ and $n_2\times n_2\times n_3$ respectively, and $\Sigma$ is a $\mathcal{L}$-diagonal tensor of size $n_1\times n_2\times n_3$.
\end{definition}

The $\mathcal{L}$-SVD of $\mathcal{A}$ can be derived from individual matrix SVD in transform space. That is, $\mathcal{L}(\mathcal{A})^{(i)} =\mathcal{L}(\mathcal{U})^{(i)}\mathcal{L}(\Sigma)^{(i)} (\mathcal{L}(\mathcal{V})^{(i)})^H $. Then the number of non-zero tubes of $\Sigma$ is called the $\mathcal{L}$-rank of $\mathcal{A} $.

\begin{definition}[Tensor-column subspace~\cite{XYL2017TSP}]
 Let $\mathcal{A} $ be an $n_1 \times n_2 \times n_3 $ tensor with  $\mathcal{L}$-rank of $r $ $(0<r\leq\min\{n_1, n_2\})$, then the $r$-dimensional tensor-column subspace $\mathcal{S}$ spanned by the columns of \(\mathcal{A}\) is defined as
\begin{equation*}
  \mathcal{S}=\{\mathcal{X}|\mathcal{X}={\mathcal {A}_1\bullet\bm{c}_1+\mathcal {A}_2\bullet\bm{c}_2+\cdots+\mathcal {A}_{n_2}\bullet\bm{c}_{n_2}}\}
\end{equation*}
where \(\bm{c}_j\), \(j\in [n_2]\), are arbitrary tubes of length \(n_3\).
\end{definition}

\begin{remark}
Let $\mathcal{S}$ be spanned by the columns of $\mathcal{A}\in\mathbb{R}^{n_1\times n_2\times n_3} $, then $\mathcal{P}\triangleq \mathcal{A}\bullet(\mathcal{A}^{\dagger} \bullet \mathcal{A})^{-1}\bullet\mathcal{A}^{\dagger} $ is an orthogonal projection onto $\mathcal{S}$ if $\mathcal{A}^{\dagger} \bullet \mathcal{A}$ is invertible.
\end{remark}

\begin{definition}
Let $\mathcal{P}$ be the orthogonal projection onto an r-dimensional subspace $\mathcal{S}\subset\mathbb{R}^{n_1\times 1\times n_3} $, then the coherence of $\mathcal{S}$ is defined as
 \begin{equation*}
   \mu(\mathcal{S})\triangleq \frac{n_1}{r}\max_j\left\|\mathcal{P}\bullet \mathcal{E}_j\right\|_F^2,
 \end{equation*}
 where $\mathcal{E}_j $ is the $n_1\times 1\times n_3 $ tensor basis with $\mathcal{E}(j,1,:)=\bm{e} $ and zeros elsewhere.
\end{definition}

Assume the subspace $ \mathcal{S}$ is spanned by the columns of $\mathcal{A}\in\mathbb{R}^{n_1\times n_2\times n_3} $. Note that $1\leq\mu(\mathcal{S})\leq\frac{n_1}{r} $, then with low $\mu(\mathcal{S}) $, each tube of $\mathcal{A} $ carries approximately same amount of information \cite{Zhang2017}.

\subsection{Problem Formulation}
\label{ssec:problem}

Let $\mathcal{S}$ be a given $r$-dimensional subspace in $\mathbb{R}^{n_1 \times 1\times n_3}$ $(r\ll n_1 )$ spanned by the columns of a third a third-order tensor $\mathcal{U}\in\mathbb{R}^{n_1\times n_2\times n_3} $, and $\mathcal{T}\in \mathbb{R}^{n_1 \times 1\times n_3} $ denotes a signal with its entries are sampled with replacement. The problem of tensor matched subspace detection can be modeled as a binary hypothesis test with hypotheses:
\begin{equation}\label{equal:hypo}
  \left\{\begin{array}{ccc}
    \mathcal{H}_{0}  & : & \mathcal{T}\in \mathcal{S}; \\
   \mathcal{H}_{1} & : & \mathcal{T}\notin \mathcal{S}.
  \end{array}
  \right.
\end{equation}

Here, we consider two types of sampling: tubal-sampling and elementwise-sampling, as showed in Fig. \ref{fig:sampling}. We use $\Omega $ to denote the set of the index of samples, $|\Omega|$ to denote the cardinality of $\Omega $, and $\mathcal{T}_{\Omega} $ to denote the corresponding sampling signal of $\mathcal{T} $. Then the definitions of tubal-sampling and elementwise-sampling are:

\textbf{Tubal-sampling:} $\Omega \subset [n_1] $, and $\mathcal{T}_{\Omega} $ is a tensor of $|\Omega|\times 1\times n_3 $ with its tubes $\mathcal{T}_\Omega(i,1,:)=\mathcal{T}(\Omega(i),1,:) $.

\textbf{Elementwise-sampling:} $\Omega \subset [n_1]\times[n_3] $, and $\mathcal{T}_{\Omega} $ is a tensor of $n_1\times 1\times n_3 $ with its entries $\mathcal{T}_\Omega(i,1,j)=\mathcal{T}(i,1,j) $ if $(i,j)\in\Omega $ and zero if $(i,j)\notin\Omega $.

\begin{figure}[t]

\begin{minipage}[b]{1\linewidth}
  \centering
  \centerline{\includegraphics[width=5cm]{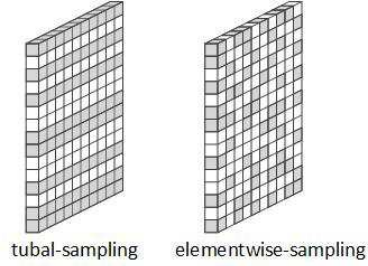}}
\end{minipage}
\caption{An illustration of tubal-sampling and elementwise-sampling patterns.}
\label{fig:sampling}
\end{figure}

Let $\mathcal{P} $ be the orthogonal projection onto $\mathcal{S} $, and $\mathcal{P}= \mathcal{U}\bullet(\mathcal{U}^{\dagger} \bullet \mathcal{U})^{-1}\bullet\mathcal{U}^{\dagger}  $. We use $\text{t}(\cdot) $ to denote the energy of a signal outside a given subspace. Then when the entries of $\mathcal{T} $ are fully observed, the test statistic can be constructed as
\begin{equation}
  \text{t}(\mathcal{T})=\|\mathcal{T}-\mathcal{P}\bullet\mathcal{T}\|_F^2 \overset{\mathcal{H}_{1}}{\underset{\mathcal{H}_{0}}{ \gtrless}} \eta.
\end{equation}
When $\mathcal{T}\in\mathcal{S} $, we have $\text{t}(\mathcal{T})\leq \eta $, and $\text{t}(\mathcal{T})>\eta $ when $\mathcal{T}\notin\mathcal{S} $. In the noiseless case, $\eta=0 $.

In practice, for high-dimensional applications, it is prohibitive or impossible to measure $\mathcal{T} $ completely, and we can only obtain a sampling signal $\mathcal{T}_\Omega $, so we can not calculate the energy of $\mathcal{T} $ outside the subspace $\mathcal{S} $ directly. Therefore, we should construct a new estimator $\|\mathcal{T}_\Omega-\mathcal{P}_\Omega\bullet\mathcal{T}_\Omega\|_F^2$ to estimate the energy of $\mathcal{T} $ outside the subspace $\mathcal{S} $ based on $\mathcal{T}_\Omega $ and the corresponding projection $\mathcal{P}_\Omega $. A good estimator should satisfy the following conditions (noiseless case):
\begin{itemize}
  \item  When $\mathcal{T}\in \mathcal{S} $, $\|\mathcal{T}-\mathcal{P}\bullet\mathcal{T}\|_F^2=0$, then $\|\mathcal{T}_\Omega-\mathcal{P}_\Omega\bullet\mathcal{T}_\Omega\|_F^2=0$ for arbitrary sample size $|\Omega| $.
  \item When $\mathcal{T}\notin \mathcal{S} $, $\|\mathcal{T}-\mathcal{P}\bullet\mathcal{T}\|_F^2>0$, then, as long as the sample size $|\Omega| $ is greater than a constant but much smaller than the size of $\mathcal{T} $, $\|\mathcal{T}_\Omega-\mathcal{P}_\Omega\bullet\mathcal{T}_\Omega\|_F^2>0$.
\end{itemize}

\section{Energy Estimation and Main Theorems}
\label{sec:theorem}

In this section, based on the tubal-sampling and elementwise-sampling, the estimators are constructed respectively. Then, two theorems are given to bound the estimators, which show that our estimators can work effectively when the sample size $|\Omega| $ is $O(r\log (rn_3)) $ for tubal sampling and $O(rn_3\log (rn_3)) $ for elementwise-sampling. Without loss of generality, we assume $\mathcal{U}\in\mathbb{R}^{n_1\times n_2\times n_3} $, whose columns span the subspace $\mathcal{S} $, is orthogonal, that means the dimension of $ \mathcal{S}$ is $ n_2$. For convenience the following representation, we set $m=|\Omega|$ to be the sample size.

\subsection{Energy Estimation}
\label{ssec:estim}

For tubal-sampling, the estimator can be constructed as follows. Note that $\mathcal{U} $ be an $n_1\times n_2\times n_3 $ tensor whose columns span the $n_2 $-dimensional subspace $\mathcal{S} $. We let $\mathcal{U}_{\Omega} $ be the $m\times n_2\times n_3 $ tensor organized by the horizontal slices of $\mathcal{U} $ indicated by $\Omega $, that means $\mathcal{U}_{\Omega}(i,:,:)=\mathcal{U}(\Omega(i),:,:) $. Then we define the projection $\mathcal{P}_{\Omega}= \mathcal{U}_{\Omega}\bullet(\mathcal{U}_{\Omega}^{\dagger} \bullet\mathcal{U}_{\Omega})^{-1}\bullet\mathcal{U}_{\Omega}^{\dagger} $. It follows immediately that if $\mathcal{T}\in \mathcal{S} $, $\|\mathcal{T}-\mathcal{P}\bullet\mathcal{T}\|_F^2=0$ and $\|\mathcal{T}_\Omega-\mathcal{P}_\Omega\bullet\mathcal{T}_\Omega\|_F^2=0$. However, it is possible that $\|\mathcal{T}_\Omega-\mathcal{P}_\Omega\bullet\mathcal{T}_\Omega\|_F^2=0$ even if $\|\mathcal{T}-\mathcal{P}\bullet\mathcal{T}\|_F^2>0$ when the sample size $m<n_2$. One of our main theorems show that if $m $ is just slightly grater than $ n_2 $, then with high probability $\|\mathcal{T}_\Omega-\mathcal{P}_\Omega\bullet\mathcal{T}_\Omega\|_F^2$ is very close to $\frac{m}{n_1}\|\mathcal{T}-\mathcal{P}\bullet\mathcal{T}\|_F^2 $.

For elementwise-sampling, the subspace $\mathcal{S}\subset{\mathbb{R}^{n_1\times 1\times n_3}} $ should be mapped into a vector subspace $\bm{S}\subset\mathbb{R}^{n_1n_3} $, and $\text{unfold}(\mathcal{T})\in \bm{S} $ for all $\mathcal{T}\in\mathcal{S} $. Let the vector subspace $\bm{S}$ be spanned by the columns of $\text{lmat}(\mathcal{U}) $, then for all $\mathcal{T}\in\mathcal{S} $, $\text{unfold}(\mathcal{T})\in \bm{S} $. However, when $\mathcal{T}\in\mathcal{S}^\bot $, $\text{unfold}(\mathcal{T})\notin \bm{S}^\bot $, where $\mathcal{S}^\bot$ is the orthogonal subspace of $\mathcal{S} $ and $\bm{S}^\bot$ is the orthogonal subspace of $\bm{S} $. Let $\mathcal{T}=\mathcal{X}+\mathcal{Y}$, where $\mathcal{X}\in\mathcal{S} $ and $\mathcal{Y}\in\mathcal{S}^\bot $. Then we use $\theta $ to denote the principle angle between $\text{unfold}(\mathcal{Y}) $ and $\bm{S}^\bot $, which is defined as follows
\begin{equation}\label{equa:angle}
 \cos(\theta)=\frac{|\langle \text{unfold}(\mathcal{Y}), \bm{P}_{\bm{S}^\bot}\text{unfold}(\mathcal{Y})\rangle|}{\|\text{unfold}(\mathcal{Y})\|_2\|\bm{P}_{\bm{S}^\bot}\text{unfold}(\mathcal{Y})\|_2}
=\frac{\|\bm{P}_{\bm{S}^\bot}\text{unfold}(\mathcal{Y})\|_2}{\|\text{unfold}(\mathcal{Y})\|_2},
\end{equation}
where $\bm{P}_{\bm{S}^\bot} $ is the orthogonal projection onto $\bm{S}^\bot $, $\langle~ ,~\rangle $ denotes the inner product of two vectors, and $|\cdot|$ denotes the absolute value.

For elementwise-sampling, the estimator can be constructed as follows. As defined in Section \ref{ssec:problem}, the sampling signal $\mathcal{T}_{\Omega } $ satisfies
\begin{equation}
  \mathcal{T}_{\Omega } (i,1,j)=\left\{
                               \begin{array}{ll}
                                 \mathcal{T}(i,1,j), & \hbox{$(i,j)\in \Omega $,} \\
                                 0, & \hbox{otherwise.}
                               \end{array}
                             \right.
\end{equation}
Let \(\bm{t}_{\Omega }=\text{unfold}(\mathcal{T}_{\Omega })\), $\bm{U}=\text{lmat}(\mathcal{U}) $, and  \(\bm{P}_{\Omega }=\bm{U}_{\Omega }(\bm{U}_{\Omega }^{H} \bm{U}_{\Omega })^{-1}\bm{U}_{\Omega }^{H}\) be the projection,  where $\bm{U}_{\Omega }\in\mathbb{R}^{n_1\times n_2\times n_3}$ satisfies
\begin{equation}
  \bm{U}_{\Omega }((j-1)n_1+i,:)=\left\{
                                        \begin{array}{ll}
                                          \bm{U}((j-1)n_1+i,:), & \hbox{$(i,j)\in \Omega $,} \\
                                          \bm{0}, & \hbox{otherwise.}
                                        \end{array}
                                      \right.
\end{equation}
Then if $\mathcal{T}\in \mathcal{S} $, $\|\mathcal{T}-\mathcal{P}\bullet\mathcal{T}\|_F^2=0$ and $\|\bm{t}_\Omega-\bm{P}_\Omega\bm{t}_\Omega\|_2^2=0$. However, it is possible that $\|\bm{t}_\Omega-\bm{P}_\Omega\bm{t}_\Omega\|_2^2=0$ even if $\|\mathcal{T}-\mathcal{P}\bullet\mathcal{T}\|_F^2>0$ when the sample size $m<n_2n_3$. One of our main theorems show that if $m $ is just slightly grater than $ n_2n_3 $, then with high probability $\|\bm{t}_\Omega-\bm{P}_\Omega\bm{t}_\Omega\|_2^2$ is very close to $\frac{m}{n_1n_3}\cos^2(\theta)\|\mathcal{T}-\mathcal{P}\bullet\mathcal{T}\|_F^2 $.

\subsection{Main Theorem with Tubal-sampling}
\label{ssec:tubal-sampling}

Rewrite $\mathcal{T}=\mathcal{X}+\mathcal{Y}$, where $\mathcal{X}\in\mathcal{S}$ and $\mathcal{Y}\in\mathcal{S}^\bot$. Hence $\|\mathcal{T}-\mathcal{P}\bullet\mathcal{T}\|_F^2 =\|\mathcal{Y}-\mathcal{P}\bullet\mathcal{Y}\|_F^2 $ and $\|\mathcal{T}_{\Omega}-\mathcal{P}_{\Omega}\bullet\mathcal{T}_{\Omega}\|_F^2 =\|\mathcal{Y}_{\Omega}-\mathcal{P}_{\Omega}\bullet\mathcal{Y}_{\Omega}\|_F^2 $ under tubal-sampling, then we have the following theorem.

\begin{theorem}
\label{theo:tubal}

 Let $\delta>0$ and $m\geq \frac{8}{3}n_2\mu(\mathcal{S})\log(\frac{2n_2n_3}{\delta})$. Then with probability at least $1-4\delta$,
    \begin{equation}
      \frac{m(1-\alpha)-c^2n_2\mu(\mathcal{S})\frac{\beta}{(1-\gamma)}}{n_1}\left\|\mathcal{T}-\mathcal{P}\bullet\mathcal{T}\right\|_F^2\leq
    \left\|\mathcal{T}_{\Omega}-\mathcal{P}_{\Omega}\bullet\mathcal{T}_{\Omega}\right\|_F^2\leq(1+\alpha)\frac{m}{n_1}\left\|\mathcal{T}-\mathcal{P}\bullet\mathcal{T}\right\|_F^2
    \end{equation} holds, where $\alpha=\sqrt{\frac{2(n_1\|\mathcal{Y}\|_{\infty^*}^2-\|\mathcal{Y}\|_F^2)}{m\|\mathcal{Y}\|_F^2}\log(\frac{1}{\delta})}+ \frac{2(n_1\|\mathcal{Y}\|_{\infty^*}^2-\|\mathcal{Y}\|_F^2)}{3m\|\mathcal{Y}\|_F^2}\log(\frac{1}{\delta})$, $\beta=\left(1+2\sqrt{\log(\frac{1}{\delta})}\right)^2$, and $\gamma=\sqrt{\frac{8c^2n_2\mu(\mathcal{S})}{3m}\log(\frac{2n_2n_3}{\delta})}$.
\end{theorem}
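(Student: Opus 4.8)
The plan is to follow the template of the matrix matched-subspace-detection result, lifting each step into the transform domain, where the $\mathcal{L}$-product diagonalizes into $n_3$ independent matrix products on the frontal slices. First I would exploit the decomposition already recorded before the theorem: since $\mathcal{Y}\in\mathcal{S}^\bot$ we have $\mathcal{P}\bullet\mathcal{Y}=0$, so $\|\mathcal{T}-\mathcal{P}\bullet\mathcal{T}\|_F^2=\|\mathcal{Y}\|_F^2$, and because $\mathcal{P}_\Omega$ is an orthogonal projection, a Pythagorean identity gives
\begin{equation*}
\|\mathcal{T}_\Omega-\mathcal{P}_\Omega\bullet\mathcal{T}_\Omega\|_F^2=\|\mathcal{Y}_\Omega\|_F^2-\|\mathcal{P}_\Omega\bullet\mathcal{Y}_\Omega\|_F^2.
\end{equation*}
The theorem then reduces to (i) two-sided control of $\|\mathcal{Y}_\Omega\|_F^2$ and (ii) an upper bound on the nonnegative term $\|\mathcal{P}_\Omega\bullet\mathcal{Y}_\Omega\|_F^2$. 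A union bound over the resulting concentration events (the two tails in (i), the Gram-matrix lower bound and the cross-term bound in (ii)) yields the stated probability $1-4\delta$.

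For step (i), I would observe that $\|\mathcal{Y}_\Omega\|_F^2=\sum_{i=1}^m\|\mathcal{Y}(\omega_i,1,:)\|_2^2$ is a sum of $m$ i.i.d.\ (sampling with replacement) nonnegative variables with mean $\frac{1}{n_1}\|\mathcal{Y}\|_F^2$, each bounded by $\|\mathcal{Y}\|_{\infty^*}^2$. A scalar Bernstein inequality produces exactly the deviation parameter $\alpha$ and gives $(1-\alpha)\frac{m}{n_1}\|\mathcal{Y}\|_F^2\le\|\mathcal{Y}_\Omega\|_F^2\le(1+\alpha)\frac{m}{n_1}\|\mathcal{Y}\|_F^2$. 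Discarding the nonnegative projection term in the displayed identity immediately delivers the upper bound of the theorem.

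For step (ii), I would start from $\|\mathcal{P}_\Omega\bullet\mathcal{Y}_\Omega\|_F^2\le\|(\mathcal{U}_\Omega^\dag\bullet\mathcal{U}_\Omega)^{-1}\|\,\|\mathcal{U}_\Omega^\dag\bullet\mathcal{Y}_\Omega\|_F^2$, where the operator norm is read slice-wise in the transform domain, and bound the two factors separately. For the inverse Gram factor, note that the $k$-th transform-domain slice $\mathcal{L}(\mathcal{U}_\Omega^\dag\bullet\mathcal{U}_\Omega)^{(k)}$ is a sum of $m$ independent rank-one matrices built from the sampled rows of $\mathcal{L}(\mathcal{U})^{(k)}$, with expectation $\frac{m}{n_1}\bm{I}_{n_2}$ by the orthogonality of $\mathcal{U}$. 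A matrix Chernoff bound applied to each of the $n_3$ slices, whose maximal summand norm is governed by $\mu(\mathcal{S})$ (rescaled by $c^2$ through the transform), shows $\lambda_{\min}\ge(1-\gamma)\frac{m}{n_1}$ uniformly; the union over $k\in[n_3]$ is the source of the $2n_2n_3/\delta$ inside $\gamma$ and of the hypothesis $m\ge\frac{8}{3}n_2\mu(\mathcal{S})\log(2n_2n_3/\delta)$ guaranteeing invertibility, so $\|(\mathcal{U}_\Omega^\dag\bullet\mathcal{U}_\Omega)^{-1}\|\le\frac{n_1}{m(1-\gamma)}$. For the cross term, since $\mathcal{U}^\dag\bullet\mathcal{Y}=0$ the sampled quantity $\mathcal{U}_\Omega^\dag\bullet\mathcal{Y}_\Omega$ is a sum of mean-zero vectors; a vector Bernstein inequality, again using coherence to control the summands, yields $\|\mathcal{U}_\Omega^\dag\bullet\mathcal{Y}_\Omega\|_F^2\le\beta\,\frac{c^2 m\, n_2\mu(\mathcal{S})}{n_1^2}\|\mathcal{Y}\|_F^2$ with $\beta=(1+2\sqrt{\log(1/\delta)})^2$. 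Multiplying the two factors gives $\|\mathcal{P}_\Omega\bullet\mathcal{Y}_\Omega\|_F^2\le\frac{c^2 n_2\mu(\mathcal{S})\beta}{n_1(1-\gamma)}\|\mathcal{Y}\|_F^2$, and subtracting this from the lower bound of (i) reproduces exactly the lower bound of the theorem.

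I expect the main obstacle to be the Gram-matrix step, specifically executing the matrix Chernoff argument simultaneously across all $n_3$ transform-domain slices while keeping the dependence on $\mu(\mathcal{S})$ sharp: one must verify slice-by-slice that sampling horizontal slices of $\mathcal{U}$ induces a sum of independent rank-one matrices with the right (identity-proportional) expectation and per-term operator norm bounded by $c^2 n_2\mu(\mathcal{S})/n_1$, then propagate a single failure probability through the union bound without degrading the clean form of $\gamma$. The secondary hazard is bookkeeping the transform constant $c$ consistently through the norm conversions $\|\mathcal{L}(\bm a)\|_F=c\|\bm a\|_F$, since it is precisely this factor that distinguishes the tensor statement from its matrix prototype.
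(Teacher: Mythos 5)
Your proposal is correct and follows essentially the same route as the paper: the same split of the residual into $\|\mathcal{Y}_\Omega\|_F^2$ minus a projection term bounded by $\|(\overline{\mathcal{U}_\Omega}^H\overline{\mathcal{U}_\Omega})^{-1}\|\,\|\mathcal{U}_\Omega^\dag\bullet\mathcal{Y}_\Omega\|_F^2$, controlled respectively by scalar, matrix, and vector Bernstein-type inequalities (the paper's Lemmas 1--3), combined with the same $4\delta$ union bound. Your slice-wise phrasing of the Gram-matrix step and the Pythagorean formulation of the trace identity are only cosmetic variations on the paper's block-diagonal transform-domain argument.
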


In order to prove Theorem \ref{theo:tubal}, the following three Lemmas, whose proofs are provided in Appendix, are needed for the proof of Theorem \ref{theo:tubal}.

\begin{lemma}\label{tubal:lemma1}
With the same $\alpha$, $\beta$, $\gamma$ given in Theorem \ref{theo:tubal}, then
\begin{equation}
  (1-\alpha)\frac{m}{n_1}\left\|\mathcal{Y}\right\|_F^2 \leq\left\|\mathcal{Y}_{\Omega}\right\|_F^2 \leq(1+\alpha)\frac{m}{n_1}\left\|\mathcal{Y}\right\|_F^2
\end{equation}
holds with probability at least $1-2\delta $.
\end{lemma}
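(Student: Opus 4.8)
The plan is to recognize that, under tubal-sampling with replacement, the quantity $\|\mathcal{Y}_{\Omega}\|_F^2$ is a sum of $m$ independent, identically distributed, bounded random variables, and to control its deviation from the mean by Bernstein's inequality. Concretely, since $\Omega(1),\dots,\Omega(m)$ are drawn uniformly from $[n_1]$ and $\mathcal{Y}_{\Omega}(i,1,:)=\mathcal{Y}(\Omega(i),1,:)$, I would set $W_i=\|\mathcal{Y}(\Omega(i),1,:)\|_2^2$ so that $\|\mathcal{Y}_{\Omega}\|_F^2=\sum_{i=1}^m W_i$. Each $W_i$ takes the value $\|\mathcal{Y}(j,1,:)\|_2^2$ with probability $1/n_1$, so
$$\mathbb{E}[W_i]=\frac{1}{n_1}\sum_{j=1}^{n_1}\|\mathcal{Y}(j,1,:)\|_2^2=\frac{1}{n_1}\|\mathcal{Y}\|_F^2,$$
and hence $\mathbb{E}\big[\|\mathcal{Y}_{\Omega}\|_F^2\big]=\frac{m}{n_1}\|\mathcal{Y}\|_F^2$, which is exactly the center of the claimed two-sided interval.

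Next I would assemble the two ingredients Bernstein needs: a range bound and a variance bound. For the range, $0\le W_i\le\max_j\|\mathcal{Y}(j,1,:)\|_2^2=\|\mathcal{Y}\|_{\infty^*}^2$, so the centered upper deviation obeys $W_i-\mathbb{E}[W_i]\le \|\mathcal{Y}\|_{\infty^*}^2-\frac{1}{n_1}\|\mathcal{Y}\|_F^2$. For the variance, I would bound $\mathbb{E}[W_i^2]=\frac{1}{n_1}\sum_j\|\mathcal{Y}(j,1,:)\|_2^4\le\frac{1}{n_1}\|\mathcal{Y}\|_{\infty^*}^2\|\mathcal{Y}\|_F^2$, which gives
$$\mathrm{Var}(W_i)\le\frac{\|\mathcal{Y}\|_F^2}{n_1^2}\big(n_1\|\mathcal{Y}\|_{\infty^*}^2-\|\mathcal{Y}\|_F^2\big),$$
so that the variance of the full sum is $\sigma^2\le \frac{m\|\mathcal{Y}\|_F^2}{n_1^2}\big(n_1\|\mathcal{Y}\|_{\infty^*}^2-\|\mathcal{Y}\|_F^2\big)$. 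The nonnegativity of $V:=n_1\|\mathcal{Y}\|_{\infty^*}^2-\|\mathcal{Y}\|_F^2$ is automatic, since the maximum tube energy dominates the average, i.e. $\|\mathcal{Y}\|_{\infty^*}^2\ge\frac{1}{n_1}\|\mathcal{Y}\|_F^2$.

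Finally I would apply the one-sided Bernstein inequality to $\sum_i W_i$ and to $-\sum_i W_i$, each at confidence $\delta$, and combine by a union bound to reach probability at least $1-2\delta$. Writing $L=\log(1/\delta)$ and solving the Bernstein bound $\exp\!\big(-\tfrac{t^2/2}{\sigma^2+bt/3}\big)=\delta$ for the deviation $t$, I would use $\sqrt{x+y}\le\sqrt{x}+\sqrt{y}$ to replace the exact quadratic root by the sufficient threshold $t=\sqrt{2L\sigma^2}+\frac{2Lb}{3}$ with $b=\|\mathcal{Y}\|_{\infty^*}^2-\frac{1}{n_1}\|\mathcal{Y}\|_F^2=V/n_1$; substituting $\sigma^2$ and $b$ and dividing through by $\frac{m}{n_1}\|\mathcal{Y}\|_F^2$ reproduces precisely the two summands of the stated $\alpha$. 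The main obstacle is the bookkeeping in this last step: checking that the variance and range bounds, once inserted into Bernstein and renormalized, collapse exactly to $\alpha$, and handling the lower tail, whose centered deviation is bounded by $\frac{1}{n_1}\|\mathcal{Y}\|_F^2$ rather than by $b$. I would dispatch this either by noting that this quantity is dominated by $b$ in the regime of interest, or by using the uniform range bound $\|\mathcal{Y}\|_{\infty^*}^2$ for both tails, which only loosens constants and leaves the form of $\alpha$ intact.
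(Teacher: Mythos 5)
Your proposal is correct and follows essentially the same route as the paper's own proof: write $\|\mathcal{Y}_{\Omega}\|_F^2$ as a sum of i.i.d.\ bounded tube-energy variables, compute the mean $\tfrac{m}{n_1}\|\mathcal{Y}\|_F^2$, bound the variance by $\tfrac{1}{n_1}\|\mathcal{Y}\|_{\infty^*}^2\|\mathcal{Y}\|_F^2-\tfrac{1}{n_1^2}\|\mathcal{Y}\|_F^4$ and the range by $\|\mathcal{Y}\|_{\infty^*}^2-\tfrac{1}{n_1}\|\mathcal{Y}\|_F^2$, then apply the scalar Bernstein inequality two-sidedly with $\tau=\alpha\tfrac{m}{n_1}\|\mathcal{Y}\|_F^2$ to get the $1-2\delta$ guarantee. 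In fact you are slightly more careful than the paper, which silently assumes $|x_i|\leq\|\mathcal{Y}\|_{\infty^*}^2-\tfrac{1}{n_1}\|\mathcal{Y}\|_F^2$ for both tails, whereas you explicitly flag that the lower-tail deviation is instead bounded by $\tfrac{1}{n_1}\|\mathcal{Y}\|_F^2$ and indicate how to handle it.
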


\begin{lemma}\label{tubal:lemma2}
With the same $\alpha$, $\beta$, $\gamma$ given in Theorem \ref{theo:tubal}, then
\begin{equation}
  \left\|\mathcal{U}_{\Omega}^{\dagger} \bullet\mathcal{Y}_{\Omega}\right\|_F^2 \leq \beta \frac{mn_2\mu(\mathcal{S})}{n_1^2}\left\|\mathcal{Y}\right\|_F^2
\end{equation}
holds with probability at least $1-\delta $.
\end{lemma}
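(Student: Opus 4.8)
The plan is to write $\mathcal{U}_\Omega^\dagger\bullet\mathcal{Y}_\Omega$ as a sum of independent, mean-zero random tensor columns and then apply a vector Bernstein inequality whose variance proxy is governed by the coherence $\mu(\mathcal{S})$. Put $\omega_i=\Omega(i)$ and, for $j\in[n_1]$, let $\mathcal{W}_j:=\mathcal{U}(j,:,:)^\dagger\in\mathbb{R}^{n_2\times 1\times n_3}$ be the transposed $j$-th horizontal slice and $\bm{y}_j:=\mathcal{Y}(j,1,:)$ the $j$-th tube of $\mathcal{Y}$. Expanding the $\mathcal{L}$-product column-by-column gives
\[
\mathcal{U}_\Omega^\dagger\bullet\mathcal{Y}_\Omega=\sum_{i=1}^m \mathcal{W}_{\omega_i}\bullet\bm{y}_{\omega_i}=:\sum_{i=1}^m\mathcal{Z}_i .
\]
Because tubal sampling draws the $\omega_i$ i.i.d.\ and uniformly from $[n_1]$, the $\mathcal{Z}_i$ are i.i.d., and
\[
\mathbb{E}[\mathcal{Z}_i]=\frac{1}{n_1}\sum_{j=1}^{n_1}\mathcal{W}_j\bullet\bm{y}_j=\frac{1}{n_1}\,\mathcal{U}^\dagger\bullet\mathcal{Y}=0 ,
\]
where the last equality is precisely the hypothesis $\mathcal{Y}\in\mathcal{S}^\bot$ (equivalently $\mathcal{U}^\dagger\bullet\mathcal{Y}=0$, using $\mathcal{U}^\dagger\bullet\mathcal{U}=\mathcal{I}$). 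Identifying each column $\mathcal{Z}_i$ with the vector $\text{unfold}(\mathcal{Z}_i)\in\mathbb{R}^{n_2n_3}$, for which $\|\mathcal{Z}_i\|_F$ is the $\ell_2$-norm, reduces the claim to controlling the norm of a sum of i.i.d.\ mean-zero random vectors.

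Next I would estimate the variance proxy $V:=\sum_{i=1}^m\mathbb{E}\|\mathcal{Z}_i\|_F^2=\frac{m}{n_1}\sum_{j=1}^{n_1}\|\mathcal{W}_j\bullet\bm{y}_j\|_F^2$. Passing to the transform domain, tube multiplication acts frequency-by-frequency as scalar multiplication, so that $\|\mathcal{W}_j\bullet\bm{y}_j\|_F^2=\big(\max_k\|\mathcal{L}(\mathcal{W}_j)^{(k)}\|_2^2\big)\,\|\bm{y}_j\|_F^2$ up to the transform normalization constant $c$ (with $c=1$ for unitary $\mathcal{L}$ such as the normalized DFT or DCT). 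The coherence enters here: since $\mathcal{U}^\dagger\bullet\mathcal{E}_j=\mathcal{W}_j$ and $\mathcal{U}$ is $\mathcal{L}$-orthogonal, $\|\mathcal{W}_j\|_F^2=\|\mathcal{P}\bullet\mathcal{E}_j\|_F^2\le \frac{n_2\mu(\mathcal{S})}{n_1}$, and bounding the per-frequency maximum by the total transform-domain energy gives $\max_k\|\mathcal{L}(\mathcal{W}_j)^{(k)}\|_2^2\le \frac{n_2\mu(\mathcal{S})}{n_1}$. Summing over $j$ with $\sum_j\|\bm{y}_j\|_F^2=\|\mathcal{Y}\|_F^2$ yields
\[
V\le \frac{m}{n_1}\cdot\frac{n_2\mu(\mathcal{S})}{n_1}\,\|\mathcal{Y}\|_F^2=\frac{m\,n_2\,\mu(\mathcal{S})}{n_1^2}\,\|\mathcal{Y}\|_F^2 .
\]

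Finally I would invoke the vector Bernstein inequality: for i.i.d.\ mean-zero $\mathcal{Z}_i$ with $\|\mathcal{Z}_i\|_F\le M$ and $\sum_i\mathbb{E}\|\mathcal{Z}_i\|_F^2\le V$, one has $\Pr\big[\,\|\sum_i\mathcal{Z}_i\|_F\ge \sqrt{V}+t\,\big]\le\exp\!\big(-t^2/(4V)\big)$ whenever $t\le V/M$. Taking $t=2\sqrt{V\log(1/\delta)}$ equates the right-hand side with $\delta$, so with probability at least $1-\delta$ we get $\|\mathcal{U}_\Omega^\dagger\bullet\mathcal{Y}_\Omega\|_F\le\sqrt{V}\big(1+2\sqrt{\log(1/\delta)}\big)$; squaring and inserting the bound on $V$ produces exactly the stated inequality with $\beta=(1+2\sqrt{\log(1/\delta)})^2$. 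The hard part will be the variance estimate: translating tube multiplication into its exact frequency-wise action, tracking the transform constant $c$, and verifying that the Frobenius-based coherence $\mu(\mathcal{S})$ indeed dominates the per-frequency quantity $\max_k\|\mathcal{L}(\mathcal{W}_j)^{(k)}\|_2^2$. A secondary technical point is checking the admissibility condition $t\le V/M$ of the Bernstein bound, which follows from the lower bound on $m$ assumed in Theorem~\ref{theo:tubal} together with the almost-sure estimate $M\le\sqrt{n_2\mu(\mathcal{S})/n_1}\,\|\mathcal{Y}\|_{\infty^*}$.
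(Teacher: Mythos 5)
Your proposal follows essentially the same route as the paper's own proof: the identical decomposition of $\mathcal{U}_{\Omega}^{\dagger}\bullet\mathcal{Y}_{\Omega}$ into i.i.d.\ zero-mean summands $\mathcal{U}^{\dagger}(:,\Omega(i),:)\bullet\mathcal{Y}(\Omega(i),1,:)$, the same coherence-based bound $V\leq \frac{mn_2\mu(\mathcal{S})}{n_1^2}\|\mathcal{Y}\|_F^2$ on the variance proxy, and the same application of the vector Bernstein inequality with $\tau=2\sqrt{V\log(1/\delta)}$, yielding $\beta=\left(1+2\sqrt{\log(1/\delta)}\right)^2$ and the same admissibility check via a lower bound on $m$. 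The only discrepancy is the transform constant: the paper's proof carries a factor $c^2$ through the variance bound and concludes with $\beta\frac{mn_2c^2\mu(\mathcal{S})}{n_1^2}\|\mathcal{Y}\|_F^2$ (consistent with Theorem 1 but not with the lemma as stated), whereas you absorb it by assuming $c=1$; this mirrors an inconsistency already present in the paper rather than a flaw in your argument.
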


\begin{lemma}\label{tubal:lemma3}
With the same $\alpha$, $\beta$, $\gamma$ given in Theorem \ref{theo:tubal}, then
\begin{equation}
 \left\|\left(\overline{\mathcal{U}_{\Omega}}^{H}~\overline{\mathcal{U}_{\Omega}} \right)^{-1}\right\|\leq \frac{n_1}{(1-\gamma)m}
\end{equation}
holds with probability at least $1-\delta $, provided that $\gamma<1$.
\end{lemma}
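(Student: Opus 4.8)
The plan is to pass to the transform domain, where $\overline{\mathcal{U}_{\Omega}}^{H}\,\overline{\mathcal{U}_{\Omega}}$ becomes block diagonal, and then control the smallest eigenvalue of each block by a matrix concentration argument. Writing $\bm{W}^{(k)}=\mathcal{L}(\mathcal{U})^{(k)}$ for the $k$-th frontal slice of the transformed basis tensor, the rows of $\bm{W}^{(k)}$ indexed by $\Omega$ form the $k$-th diagonal block of $\overline{\mathcal{U}_{\Omega}}$, so that $\overline{\mathcal{U}_{\Omega}}^{H}\,\overline{\mathcal{U}_{\Omega}}$ is block diagonal with blocks $\bm{M}^{(k)}=\sum_{i=1}^{m}\bm{w}^{(k)}_{\Omega(i)}(\bm{w}^{(k)}_{\Omega(i)})^{H}$, where $\bm{w}^{(k)}_{j}=(\bm{W}^{(k)}(j,:))^{H}$. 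Since the spectral norm of a block-diagonal inverse is the largest of the blockwise inverses, $\|(\overline{\mathcal{U}_{\Omega}}^{H}\,\overline{\mathcal{U}_{\Omega}})^{-1}\|=1/\min_{k}\lambda_{\min}(\bm{M}^{(k)})$, so it suffices to show $\lambda_{\min}(\bm{M}^{(k)})\ge (1-\gamma)\tfrac{m}{n_1}$ for every $k$ with the stated probability.

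Because $\Omega$ is sampled uniformly with replacement from $[n_1]$, each summand is an i.i.d. rank-one positive semidefinite matrix with $\mathbb{E}[\bm{w}^{(k)}_{\Omega(i)}(\bm{w}^{(k)}_{\Omega(i)})^{H}]=\tfrac{1}{n_1}(\bm{W}^{(k)})^{H}\bm{W}^{(k)}$. The orthogonality of $\mathcal{U}$ gives $\overline{\mathcal{U}}^{H}\overline{\mathcal{U}}=\overline{\mathcal{I}}=\bm{I}_{n_2n_3}$, hence $(\bm{W}^{(k)})^{H}\bm{W}^{(k)}=\bm{I}_{n_2}$ and $\mathbb{E}\bm{M}^{(k)}=\tfrac{m}{n_1}\bm{I}_{n_2}$; in particular the minimum expected eigenvalue is $\mu_{\min}=\tfrac{m}{n_1}$ for every $k$.

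The key technical step is to bound the per-summand spectral norm $R=\max_{j,k}\|\bm{W}^{(k)}(j,:)\|_2^2$ by the coherence. I would compute $\mathcal{P}\bullet\mathcal{E}_j$ in the transform domain: using $\overline{\mathcal{P}}=\overline{\mathcal{U}}\,\overline{\mathcal{U}}^{H}$ (valid since $\mathcal{P}=\mathcal{U}\bullet\mathcal{U}^{\dagger}$ for orthogonal $\mathcal{U}$) and the fact that each block of $\overline{\mathcal{E}_j}$ is the standard basis vector $\bm{e}_j$, each block of $\overline{\mathcal{P}\bullet\mathcal{E}_j}$ equals $\bm{W}^{(k)}(\bm{W}^{(k)})^{H}\bm{e}_j$, a column of the orthogonal projector $\bm{W}^{(k)}(\bm{W}^{(k)})^{H}$; its squared norm is the $j$-th diagonal entry $\|\bm{W}^{(k)}(j,:)\|_2^2$. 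Summing over $k$ and using $\|\overline{\mathcal{A}}\|_F^2=c^2\|\mathcal{A}\|_F^2$ gives $\sum_{k}\|\bm{W}^{(k)}(j,:)\|_2^2=c^2\|\mathcal{P}\bullet\mathcal{E}_j\|_F^2$, so by the definition of $\mu(\mathcal{S})$ with $r=n_2$ one obtains $R\le\max_j\sum_k\|\bm{W}^{(k)}(j,:)\|_2^2=\tfrac{c^2 n_2\mu(\mathcal{S})}{n_1}$.

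Finally I would apply a matrix Chernoff lower-tail bound to each block: for a sum of independent positive semidefinite matrices with summand norm at most $R$ and minimum expected eigenvalue $\mu_{\min}$, the event $\lambda_{\min}(\bm{M}^{(k)})\le(1-\gamma)\mu_{\min}$ has probability at most $n_2\exp(-\tfrac{3\gamma^2\mu_{\min}}{8R})$ for $\gamma\in[0,1)$. Substituting $\mu_{\min}=m/n_1$ and $R\le c^2 n_2\mu(\mathcal{S})/n_1$, taking a union bound over the $n_3$ blocks, and solving the resulting inequality for $\gamma$ reproduces the stated $\gamma=\sqrt{\tfrac{8c^2 n_2\mu(\mathcal{S})}{3m}\log(\tfrac{2n_2n_3}{\delta})}$ and failure probability $\delta$ (the factor $8/3$ and the constant inside the logarithm tracking the precise form of the Chernoff exponent used); the hypothesis $\gamma<1$ is exactly what keeps the lower bound, and hence the inverse norm, finite. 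The main obstacle is the transform-domain coherence computation of the preceding paragraph — getting the constant $c$ and the factor $n_2/n_1$ correct in the bound on $R$ — since everything downstream is a black-box application of matrix concentration and a union bound.
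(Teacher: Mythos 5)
Your proposal is correct, and it rests on the same two pillars as the paper's own proof: the expectation identity $\mathbb{E}\bigl[\overline{\mathcal{U}_{\Omega}}^{H}\overline{\mathcal{U}_{\Omega}}\bigr]=\frac{m}{n_1}\bm{I}_{n_2n_3}$ (from orthogonality of $\mathcal{U}$) and the coherence bound on the sampled rows; your transform-domain computation $\sum_k\|\bm{W}^{(k)}(j,:)\|_2^2=c^2\|\mathcal{P}\bullet\mathcal{E}_j\|_F^2\leq c^2n_2\mu(\mathcal{S})/n_1$ is exactly the step the paper compresses into the identity $\|\mathcal{L}(\mathcal{U})(\Omega(k),:,:)\|_F^2=\|\mathcal{L}(\mathcal{P})(:,\Omega(k),:)\|_F^2\leq c^2n_2\mu(\mathcal{S})/n_1$, so you have correctly isolated the main obstacle and resolved it the same way. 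Where you genuinely differ is the concentration step. The paper centers the full $n_2n_3\times n_2n_3$ block-diagonal Gram matrix, taking $\bm{X}_k=\overline{\mathcal{U}_{\Omega(k)}}^{H}\overline{\mathcal{U}_{\Omega(k)}}-\frac{1}{n_1}\bm{I}_{n_2n_3}$, and applies the two-sided matrix Bernstein inequality (Lemma~\ref{lem:mat}) with the restriction $M\tau\leq m\rho^2$; that restriction is precisely where the factor $8/3$ and the $2n_2n_3$ inside the logarithm defining $\gamma$ come from, so the paper's route produces the lemma's constants verbatim. You instead exploit the block-diagonal structure explicitly, apply a one-sided matrix Chernoff lower-tail bound to each $n_2\times n_2$ block $\bm{M}^{(k)}$, and union-bound over the $n_3$ blocks. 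This is quantitatively interchangeable: you only need a lower tail (a bound on $\lambda_{\min}$, not on the full deviation), your dimension factor is $n_2n_3$ rather than $2n_2n_3$, and the standard Chernoff exponent $-\gamma^2\mu_{\min}/(2R)$ implies your stated $-3\gamma^2\mu_{\min}/(8R)$, so with the paper's $\gamma$ your failure probability comes out at most $\delta/2\leq\delta$. In short, your argument is slightly sharper and more structurally transparent (it makes visible that the problem decouples across frontal slices), while the paper's argument is the one whose constants match the statement exactly.
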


\begin{proof}[Proof of Theorem \ref{theo:tubal}]
Consider  $\|\mathcal{T}_{\Omega}-\mathcal{P}_{\Omega}\bullet\mathcal{T}_{\Omega}\|_F^2 =\|\mathcal{Y}_{\Omega}-\mathcal{P}_{\Omega}\bullet\mathcal{Y}_{\Omega}\|_F^2 $, and we split $\|\mathcal{Y}_{\Omega}-\mathcal{P}_{\Omega}\bullet\mathcal{Y}_{\Omega}\|_F^2 $ into three terms:
\begin{eqnarray}
  \left\|\mathcal{Y}_{\Omega}-\mathcal{P}_{\Omega}\bullet\mathcal{Y}_{\Omega}\right\|_F^2 &=& \frac{1}{c^2}\left\|\overline{\mathcal{Y}_{\Omega}}-\overline{\mathcal{P}_{\Omega}}~\overline{\mathcal{Y}_{\Omega}}\right\|_F^2\nonumber  \\
   &=& \frac{1}{c^2}\text{trace}\left(\left(\overline{\mathcal{Y}_{\Omega}}-\overline{\mathcal{P}_{\Omega}}~\overline{\mathcal{Y}_{\Omega}}\right)^H \left(\overline{\mathcal{Y}_{\Omega}}-\overline{\mathcal{P}_{\Omega}}~\overline{\mathcal{Y}_{\Omega}}\right)\right)\nonumber \\
   &=& \frac{1}{c^2}\text{trace}\left(\overline{\mathcal{Y}_{\Omega}}^H\overline{\mathcal{Y}_{\Omega}}-\overline{\mathcal{Y}_{\Omega}}^H\overline{\mathcal{P}_{\Omega}}~\overline{\mathcal{Y}_{\Omega}}\right)\nonumber \\
   &=& \|\mathcal{Y}_{\Omega}\|_F^2 - \frac{1}{c^2}\text{trace}\left(\overline{\mathcal{Y}_{\Omega}}^H \overline{\mathcal{U}_{\Omega}}(\overline{\mathcal{U}_{\Omega}}^H\overline{\mathcal{U}_{\Omega}})^{-1}\overline{\mathcal{U}_{\Omega}}^H~\overline{\mathcal{Y}_{\Omega}}\right)\nonumber \\
   &\geq & \left\|\mathcal{Y}_{\Omega}\right\|_F^2-\left\|\left(\overline{\mathcal{U}_{\Omega}}^{H}~\overline{\mathcal{U}_{\Omega}} \right)^{-1}\right\| \left\|\mathcal{U}_{\Omega}^{\dagger}\bullet\mathcal{Y}_{\Omega}\right\|_F^2,
\end{eqnarray}
where $\text{trace}(\cdot)$ denotes the trace of a matrix. Taking the union bounds of Lemma \ref{tubal:lemma1}, Lemma \ref{tubal:lemma2} and Lemma \ref{tubal:lemma3}, we have
\begin{equation}
 \frac{m(1-\alpha)-c^2n_2\mu(\mathcal{S})\frac{\beta}{(1-\gamma)}}{n_1}\left\|\mathcal{Y}\right\|_F^2\leq\left\|\mathcal{Y}_{\Omega}-\mathcal{P}_{\Omega}\bullet\mathcal{Y}_{\Omega}\right\|_F^2\leq(1+\alpha)\frac{m}{n_1}\left\|\mathcal{Y}\right\|_F^2
\end{equation}
with probability at least $1-4\delta$.
\end{proof}

\subsection{Main Theorem with Elementwise-sampling}

As described in Section \ref{ssec:estim}, the subspace $\mathcal{S} $ is mapped into the vector subspace $\bm{S} $ for elementwise-sampling. Then the coherence of $\bm{S} $ is needed. The coherence of $\bm{S} $ is defined as
\begin{equation*}
   \mu(\bm{S})\triangleq \frac{n_1}{n_2}\max_j\left\|\bm{P}_{\bm{S}} \bm{e}_j\right\|_2^2,
 \end{equation*}
where $\bm{e}_j $ is a standard basis of $\mathbb{R}^{n_1n_3} $ and $n_2n_3 $ is the dimension of $\bm{S} $.
Recall $\mathcal{T}=\mathcal{X}+\mathcal{Y} $ where $\mathcal{X}\in \mathcal{S} $ and $\mathcal{Y}\in \mathcal{S}^\bot $. Let $\bm{t}=\text{unfold}(\mathcal{T})$, and we rewrite $\bm{t}=\bm{x}+\bm{y} $, where $\bm{x}\in \bm{S} $, but $\bm{y}\in \bm{S}^\bot $. Furthermore, $\|\bm{y}\|_2=\cos(\theta)\|\mathcal{Y}\|_F $. Let $\bm{y}_{\Omega }$ be the sample of $\bm{y} $ and $\|\bm{t}-\bm{P}\bm{t}\|_2^2 =\|\bm{y}-\bm{P}\bm{y}\|_2^2 $ and $\|\bm{t}_{\Omega }-\bm{P}_{\Omega }\bm{t}_{\Omega }\|_2^2 =\|\bm{y}_{\Omega }-\bm{P}_{\Omega }\bm{y}_{\Omega }\|_2^2$ for elementwise-sampling. Then we have the following theorem.

\begin{theorem} \label{theo:element}
    Let $\delta>0$, $m\geq \frac{8}{3}n_2n_3\mu(\bm{S})\log(\frac{2n_2n_3}{\delta})$, then with probability at least $1-4\delta$
    \begin{equation}
      \frac{m(1-\alpha)-n_2n_3\mu(\bm{S})\frac{\beta}{(1-\gamma)}}{n_1n_3}\cos^2(\theta)\left\|\mathcal{T}-\mathcal{P}\bullet\mathcal{T}\right\|_F^2\leq
    \left\|\bm{t}_{\Omega }-\bm{P}_{\Omega }\bm{t}_{\Omega }\right\|_2^2 \leq(1+\alpha)\frac{m}{n_1n_3}\cos^2(\theta)\left\|\mathcal{T}-\mathcal{P}\bullet\mathcal{T}\right\|_F^2
    \end{equation} holds, where $\alpha=\sqrt{\frac{2(n_1n_3\|\bm{y}\|_\infty^2-\|\bm{y}\|_2^2)}{m\|\bm{y}\|_2^2}\log(\frac{1}{\delta})}+ \frac{2(n_1n_3\|\bm{y}\|_\infty^2-\|\bm{y}\|_2^2)}{3m\|\bm{y}\|_2^2}\log(\frac{1}{\delta})$, $\beta=\left(1+2\sqrt{\log(\frac{1}{\delta})}\right)^2$,  $\gamma=\sqrt{\frac{8n_2n_3\mu(\bm{S})}{3m}\log(\frac{2n_2n_3}{\delta})}$.
\end{theorem}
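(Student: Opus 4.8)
The plan is to reduce Theorem \ref{theo:element} to a matched-subspace-detection statement lying entirely in the Euclidean space $\mathbb{R}^{n_1n_3}$, so that the whole argument runs parallel to the proof of Theorem \ref{theo:tubal} under the dictionary $n_1\mapsto n_1n_3$, $n_2\mapsto n_2n_3$, $\mu(\mathcal{S})\mapsto\mu(\bm{S})$, tensor columns $\mapsto$ vectors, and $\|\cdot\|_{\infty^*}\mapsto\|\cdot\|_\infty$. First I would record the reductions already prepared in Section \ref{ssec:estim}: since $\mathcal{X}\in\mathcal{S}$ unfolds to $\bm{x}\in\bm{S}$ and $\bm{y}=\bm{P}_{\bm{S}^\bot}\bm{t}\in\bm{S}^\bot$, we have $\|\bm{t}_\Omega-\bm{P}_\Omega\bm{t}_\Omega\|_2^2=\|\bm{y}_\Omega-\bm{P}_\Omega\bm{y}_\Omega\|_2^2$ and, crucially, $\bm{U}^H\bm{y}=\bm{0}$ because $\bm{y}$ is orthogonal to $\bm{S}=\text{range}(\bm{U})$. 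The target energy is recovered only at the very end through $\|\bm{y}\|_2^2=\cos^2(\theta)\|\mathcal{Y}\|_F^2=\cos^2(\theta)\|\mathcal{T}-\mathcal{P}\bullet\mathcal{T}\|_F^2$, which is exactly where the $\cos^2(\theta)$ factor in the statement comes from; here I would use that $\text{unfold}(\cdot)$ is an isometry to pass between $\|\bm{y}\|_2$ and $\|\mathcal{Y}\|_F$.

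Next I would reproduce the splitting identity of Theorem \ref{theo:tubal} in the vector setting, using that $\bm{P}_\Omega$ is an orthogonal projection:
\begin{equation*}
  \|\bm{y}_\Omega-\bm{P}_\Omega\bm{y}_\Omega\|_2^2
  =\|\bm{y}_\Omega\|_2^2-\bm{y}_\Omega^H\bm{U}_\Omega(\bm{U}_\Omega^H\bm{U}_\Omega)^{-1}\bm{U}_\Omega^H\bm{y}_\Omega
  \ge \|\bm{y}_\Omega\|_2^2-\big\|(\bm{U}_\Omega^H\bm{U}_\Omega)^{-1}\big\|\,\|\bm{U}_\Omega^H\bm{y}_\Omega\|_2^2 ,
\end{equation*}
together with the trivial upper bound $\|\bm{y}_\Omega-\bm{P}_\Omega\bm{y}_\Omega\|_2^2\le\|\bm{y}_\Omega\|_2^2$. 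Controlling the three factors on the right is the heart of the proof, and it requires three vector analogues of Lemmas \ref{tubal:lemma1}--\ref{tubal:lemma3}, to be merged by a union bound. I would establish: (i) a Bernstein concentration $(1-\alpha)\tfrac{m}{n_1n_3}\|\bm{y}\|_2^2\le\|\bm{y}_\Omega\|_2^2\le(1+\alpha)\tfrac{m}{n_1n_3}\|\bm{y}\|_2^2$ for the sampled energy $\|\bm{y}_\Omega\|_2^2=\sum_{e\in\Omega}\bm{y}(e)^2$, where the variance proxy $n_1n_3\|\bm{y}\|_\infty^2-\|\bm{y}\|_2^2$ produces exactly the stated $\alpha$; (ii) a cross-term bound $\|\bm{U}_\Omega^H\bm{y}_\Omega\|_2^2\le\beta\,\tfrac{m\,n_2n_3\,\mu(\bm{S})}{(n_1n_3)^2}\|\bm{y}\|_2^2$, which relies on $\bm{U}^H\bm{y}=\bm{0}$ so that $\bm{U}_\Omega^H\bm{y}_\Omega=\sum_{e\in\Omega}\bm{U}(e)^H\bm{y}(e)$ is a sum of zero-mean terms, the coherence $\mu(\bm{S})$ bounding each summand to yield $\beta$; and (iii) the spectral bound $\|(\bm{U}_\Omega^H\bm{U}_\Omega)^{-1}\|\le\tfrac{n_1n_3}{(1-\gamma)m}$, obtained from a matrix Chernoff/Bernstein estimate on $\lambda_{\min}\big(\sum_{e\in\Omega}\bm{U}(e)^H\bm{U}(e)\big)$ around its mean $\tfrac{m}{n_1n_3}\bm{U}^H\bm{U}$.

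Substituting (i)--(iii) into the split identity yields the lower bound $\tfrac{m(1-\alpha)-n_2n_3\mu(\bm{S})\beta/(1-\gamma)}{n_1n_3}\|\bm{y}\|_2^2$, while (i) alone gives the upper bound $(1+\alpha)\tfrac{m}{n_1n_3}\|\bm{y}\|_2^2$; the three failure probabilities $2\delta$, $\delta$, $\delta$ add to the claimed $1-4\delta$, and converting $\|\bm{y}\|_2^2$ back via $\cos^2(\theta)\|\mathcal{T}-\mathcal{P}\bullet\mathcal{T}\|_F^2$ finishes the argument. I expect the main obstacle to be lemma (iii): the hypothesis $m\ge\tfrac{8}{3}n_2n_3\mu(\bm{S})\log(\tfrac{2n_2n_3}{\delta})$ is precisely what is needed to force $\gamma<1$ with high probability, and the matrix-Chernoff deviation must be driven by $\max_e\|\bm{U}(e)\|_2^2$, so one must verify that the coherence $\mu(\bm{S})$, defined through $\bm{P}_{\bm{S}}\bm{e}_j$, genuinely controls the row norms of $\bm{U}=\text{lmat}(\mathcal{U})$ --- an identity that is clean when $\bm{U}$ has orthonormal columns (as for the discrete Fourier transform, where $\text{lmat}=\text{bcirc}$) and must be argued with care otherwise. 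A secondary point, and the reason no constant $c$ appears here in contrast to the $c^2$ of Theorem \ref{theo:tubal}, is that elementwise sampling acts directly on the physical-domain coordinates of $\text{unfold}(\mathcal{T})$ rather than in the transform domain, so no $\mathcal{L}$-scaling is incurred.
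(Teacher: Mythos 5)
Your proposal follows the paper's own proof essentially step for step: the same reduction to $\bm{y}=\bm{P}_{\bm{S}^\bot}\bm{t}$ with $\|\bm{t}_\Omega-\bm{P}_\Omega\bm{t}_\Omega\|_2^2=\|\bm{y}_\Omega-\bm{P}_\Omega\bm{y}_\Omega\|_2^2$, the same splitting $\|\bm{y}_\Omega-\bm{P}_\Omega\bm{y}_\Omega\|_2^2\ge\|\bm{y}_\Omega\|_2^2-\bigl\|(\bm{U}_\Omega^H\bm{U}_\Omega)^{-1}\bigr\|\,\|\bm{U}_\Omega^H\bm{y}_\Omega\|_2^2$, the same three concentration lemmas (the paper cites your (i) and (iii) from prior work and proves your (ii) in its appendix via the vector Bernstein inequality, using $\bm{U}^H\bm{y}=\bm{0}$ exactly as you do), the same $2\delta+\delta+\delta$ union bound, and the same final conversion $\|\bm{y}\|_2^2=\cos^2(\theta)\|\mathcal{T}-\mathcal{P}\bullet\mathcal{T}\|_F^2$. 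The concerns you flag (coherence controlling the row norms of $\mathrm{lmat}(\mathcal{U})$, absence of the constant $c$) are points the paper resolves by assumption or citation rather than argument, so they do not separate your route from the paper's.
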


We need the following three Lemmas to prove Theorem \ref{theo:element}, and the proof of lemma \ref{element:lemma2} is provided in Appendix.

\begin{lemma}[\cite{AKAS2013}]\label{element:lemma1}
With the same $\alpha$, $\beta$, $\gamma$ given in Theorem \ref{theo:element}, then
\begin{equation}
  (1-\alpha)\frac{m}{n_1n_3}\|\bm{y}\|_2^2\leq\|\bm{y}_{\Omega }\|_2^2 \leq(1+\alpha)\frac{m}{n_1n_3}\|\bm{y}\|_2^2
\end{equation}
holds with probability at least $1-2\delta $.
\end{lemma}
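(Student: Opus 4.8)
The plan is to recognize the sampled energy $\|\bm{y}_{\Omega}\|_2^2$ as a sum of $m$ independent, identically distributed bounded random variables and then apply a Bernstein-type concentration inequality. Since the entries are sampled with replacement, I would write $\Omega=\{s_1,\ldots,s_m\}$ with each $s_\ell$ drawn uniformly from the $n_1n_3$ index positions of $\bm{y}=\text{unfold}(\mathcal{Y})$, and set $X_\ell=|\bm{y}_{s_\ell}|^2$ so that $\|\bm{y}_{\Omega}\|_2^2=\sum_{\ell=1}^m X_\ell$. Each $X_\ell$ is supported on $[0,\|\bm{y}\|_\infty^2]$ with mean $\mathbb{E}[X_\ell]=\frac{1}{n_1n_3}\sum_k|\bm{y}_k|^2=\frac{1}{n_1n_3}\|\bm{y}\|_2^2$, hence $\mathbb{E}\|\bm{y}_{\Omega}\|_2^2=\frac{m}{n_1n_3}\|\bm{y}\|_2^2$, which is exactly the quantity sitting at the center of the claimed two-sided bound. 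The lemma is therefore equivalent to the concentration statement $\bigl|\sum_\ell(X_\ell-\mathbb{E}X_\ell)\bigr|\le\alpha\,\frac{m}{n_1n_3}\|\bm{y}\|_2^2$ with probability at least $1-2\delta$.

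Next I would produce the two ingredients Bernstein's inequality requires, a uniform bound on the centered summands and a bound on the total variance, both expressed through the combination $n_1n_3\|\bm{y}\|_\infty^2-\|\bm{y}\|_2^2$ appearing in $\alpha$. For the range, since $0\le X_\ell\le\|\bm{y}\|_\infty^2$ and $\mathbb{E}X_\ell=\frac{\|\bm{y}\|_2^2}{n_1n_3}$, the largest upward deviation is $b=\|\bm{y}\|_\infty^2-\frac{\|\bm{y}\|_2^2}{n_1n_3}=\frac{n_1n_3\|\bm{y}\|_\infty^2-\|\bm{y}\|_2^2}{n_1n_3}$. For the variance, the estimate $\mathbb{E}[X_\ell^2]=\frac{1}{n_1n_3}\sum_k|\bm{y}_k|^4\le\frac{\|\bm{y}\|_\infty^2}{n_1n_3}\|\bm{y}\|_2^2$ gives $\mathrm{Var}(X_\ell)\le\frac{\|\bm{y}\|_\infty^2\|\bm{y}\|_2^2}{n_1n_3}-\frac{\|\bm{y}\|_2^4}{(n_1n_3)^2}=\frac{\|\bm{y}\|_2^2(n_1n_3\|\bm{y}\|_\infty^2-\|\bm{y}\|_2^2)}{(n_1n_3)^2}$, so the total variance is $v=m\,\mathrm{Var}(X_\ell)\le\frac{m\|\bm{y}\|_2^2(n_1n_3\|\bm{y}\|_\infty^2-\|\bm{y}\|_2^2)}{(n_1n_3)^2}$.

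Finally I would invoke the two-sided Bernstein inequality $\Pr\!\bigl(|\sum_\ell(X_\ell-\mathbb{E}X_\ell)|>t\bigr)\le 2\exp\!\bigl(-\frac{t^2/2}{v+bt/3}\bigr)$ and invert it: the choice $t=\sqrt{2v\log(\frac{1}{\delta})}+\frac{2b}{3}\log(\frac{1}{\delta})$ forces the exponent below $-\log(\frac{1}{\delta})$ and hence the right-hand side below $2\delta$. Dividing this $t$ by $\mathbb{E}\|\bm{y}_{\Omega}\|_2^2=\frac{m}{n_1n_3}\|\bm{y}\|_2^2$ and substituting the range and variance bounds from the previous step reproduces $\alpha$ term by term, the square-root piece collapsing to $\sqrt{\frac{2(n_1n_3\|\bm{y}\|_\infty^2-\|\bm{y}\|_2^2)}{m\|\bm{y}\|_2^2}\log(\frac{1}{\delta})}$ and the linear piece to $\frac{2(n_1n_3\|\bm{y}\|_\infty^2-\|\bm{y}\|_2^2)}{3m\|\bm{y}\|_2^2}\log(\frac{1}{\delta})$. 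I expect the main technical care to lie in the inversion of Bernstein and the bookkeeping of constants, so that $b$ and $v$ cancel against $\mathbb{E}\|\bm{y}_{\Omega}\|_2^2$ to yield precisely the combination $n_1n_3\|\bm{y}\|_\infty^2-\|\bm{y}\|_2^2$; one mild subtlety is that this $b$ is the sharp upper-tail range while the lower tail only needs the smaller range $\frac{\|\bm{y}\|_2^2}{n_1n_3}$, so using the upper-tail $b$ for both tails is conservative but still delivers the symmetric bound as stated.
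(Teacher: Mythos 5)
Your proof is correct and is essentially the paper's own argument: although the paper imports this lemma from its cited reference rather than proving it in the appendix, its appendix proof of the tubal-sampling analogue (Lemma \ref{tubal:lemma1}) is exactly your computation --- center the sampled squared entries, bound the per-sample variance and range by the combination $n_1n_3\|\bm{y}\|_\infty^2-\|\bm{y}\|_2^2$, and invert the scalar Bernstein inequality with the deviation $\tau=\alpha\frac{m}{n_1n_3}\|\bm{y}\|_2^2$, which is precisely your choice $t=\sqrt{2v\log(1/\delta)}+\tfrac{2b}{3}\log(1/\delta)$. Even the lower-tail range subtlety you flag is handled the same way (silently, using the upper-tail constant for both tails) in the paper's proof of the tubal version, so your attempt matches the intended proof in both substance and constants.
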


\begin{lemma}\label{element:lemma2}
With the same $\alpha$, $\beta$, $\gamma$ given in Theorem \ref{theo:element}, then
\begin{equation}
   \left\|\bm{U}_{\Omega }^{H} \bm{y}_{\Omega }\right\|_2^2\leq \beta \frac{mn_2}{n_1^2n_3}\mu(\bm{S})\left\|\bm{y}\right\|_2^2
\end{equation}
holds with probability at least $1-\delta $.
\end{lemma}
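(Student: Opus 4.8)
The plan is to realize $\bm{U}_{\Omega}^{H}\bm{y}_{\Omega}$ as a sum of $m$ independent, mean-zero random vectors and to bound its Euclidean norm by a vector-valued Bernstein inequality, exactly paralleling the proof of the tubal estimate in Lemma~\ref{tubal:lemma2}. First I would note that, since $\bm{y}\in\bm{S}^{\bot}$ and the columns of $\bm{U}=\text{lmat}(\mathcal{U})$ span $\bm{S}$, we have $\bm{U}^{H}\bm{y}=\bm{0}$. Let $\bm{u}_{i}$ denote the $i$-th row of $\bm{U}$ written as a column vector, and let $\omega_{1},\dots,\omega_{m}$ be the indices of $[n_{1}n_{3}]$ drawn uniformly with replacement. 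Then sampling gives $\bm{U}_{\Omega}^{H}\bm{y}_{\Omega}=\sum_{k=1}^{m}\bm{z}_{k}$ with $\bm{z}_{k}=y_{\omega_{k}}\bm{u}_{\omega_{k}}$, where the $\bm{z}_{k}$ are i.i.d.\ and centered because $\mathbb{E}[\bm{z}_{k}]=\frac{1}{n_{1}n_{3}}\bm{U}^{H}\bm{y}=\bm{0}$.

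Next I would convert the coherence hypothesis into the moment bounds demanded by the inequality. Because $\mathcal{U}$ is $\mathcal{L}$-orthogonal, $\bm{U}$ has orthonormal columns, so $\bm{P}_{\bm{S}}=\bm{U}\bm{U}^{H}$ and $\|\bm{u}_{i}\|_{2}^{2}=\bm{e}_{i}^{H}\bm{P}_{\bm{S}}\bm{e}_{i}=\|\bm{P}_{\bm{S}}\bm{e}_{i}\|_{2}^{2}\le \tfrac{n_{2}}{n_{1}}\mu(\bm{S})$ by the definition of $\mu(\bm{S})$. This yields the uniform bound $\|\bm{z}_{k}\|_{2}\le M:=\|\bm{y}\|_{\infty}\sqrt{\tfrac{n_{2}}{n_{1}}\mu(\bm{S})}$ and the variance estimate $\mathbb{E}\|\bm{z}_{k}\|_{2}^{2}=\frac{1}{n_{1}n_{3}}\sum_{i}y_{i}^{2}\|\bm{u}_{i}\|_{2}^{2}\le \frac{n_{2}}{n_{1}^{2}n_{3}}\mu(\bm{S})\|\bm{y}\|_{2}^{2}$, whence $V^{2}:=\sum_{k}\mathbb{E}\|\bm{z}_{k}\|_{2}^{2}\le \frac{mn_{2}}{n_{1}^{2}n_{3}}\mu(\bm{S})\|\bm{y}\|_{2}^{2}$. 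By independence and Jensen's inequality, $\mathbb{E}\|\sum_{k}\bm{z}_{k}\|_{2}\le V$.

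Finally I would apply the vector Bernstein inequality used for the analogous matrix lemma in \cite{AKAS2013}: with probability at least $1-\delta$, $\|\bm{U}_{\Omega}^{H}\bm{y}_{\Omega}\|_{2}\le(1+2\sqrt{\log(1/\delta)})V$, and squaring together with the bound on $V^{2}$ produces $\|\bm{U}_{\Omega}^{H}\bm{y}_{\Omega}\|_{2}^{2}\le\beta\frac{mn_{2}}{n_{1}^{2}n_{3}}\mu(\bm{S})\|\bm{y}\|_{2}^{2}$, which is the claim. I expect the main obstacle to be the concentration step itself: I must select the tail bound whose constants yield exactly $\beta=(1+2\sqrt{\log(1/\delta)})^{2}$, and I must verify that the chosen deviation level $t=(1+2\sqrt{\log(1/\delta)})V$ lies in the admissible range $t\le V^{2}/M$ of that inequality. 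This range condition reduces to a lower bound on $m$ of the same order as the standing hypothesis $m\ge\frac{8}{3}n_{2}n_{3}\mu(\bm{S})\log(\frac{2n_{2}n_{3}}{\delta})$, so the sampling assumption of Theorem~\ref{theo:element} is precisely what makes the bound valid; the remaining ingredients—the i.i.d.\ decomposition and the coherence-based moment computations—are routine and mirror the tubal case.
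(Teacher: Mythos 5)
Your proposal follows essentially the same argument as the paper's proof: write $\bm{U}_{\Omega}^{H}\bm{y}_{\Omega}$ as a sum of independent zero-mean random vectors (zero mean because $\bm{U}^{H}\bm{y}=\bm{0}$ for $\bm{y}\in\bm{S}^{\bot}$ under sampling with replacement), bound the total variance by $\frac{mn_2\mu(\bm{S})}{n_1^2n_3}\|\bm{y}\|_2^2$ via the coherence of $\bm{S}$, and apply the vector Bernstein inequality (Lemma~\ref{lem:vec}) with deviation $\tau=2\sqrt{M\log(1/\delta)}$, which yields exactly $\beta=\left(1+2\sqrt{\log(1/\delta)}\right)^2$. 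The only minor difference is bookkeeping on the admissible-deviation check: the paper states it as the explicit sufficient condition $m\geq 4\frac{n_1n_3\|\bm{y}\|_{\infty}^2}{\|\bm{y}\|_2^2}\log(\frac{1}{\delta})$ (depending on the spikiness of $\bm{y}$) rather than deriving it from the sampling hypothesis of Theorem~\ref{theo:element}, but this does not change the substance of the argument.
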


\begin{lemma}[\cite{Balzano2010}]\label{element:lemma3}
With the same $\alpha$, $\beta$, $\gamma$ given in Theorem \ref{theo:element}, then
\begin{equation}
   \left\|\left(\bm{U}_{\Omega }^H \bm{U}_{\Omega }\right)^{-1}\right\|\leq \frac{n_1n_3}{(1-\gamma)m}
\end{equation}
holds with probability at least $1-\delta $, provided that $\gamma<1$.
\end{lemma}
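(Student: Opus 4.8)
The plan is to reduce the operator-norm bound on the inverse to a lower bound on the smallest eigenvalue of $\bm{U}_{\Omega}^H\bm{U}_{\Omega}$, and then to control that eigenvalue with a matrix Chernoff inequality. Since $\|(\bm{U}_{\Omega}^H\bm{U}_{\Omega})^{-1}\|=\lambda_{\min}(\bm{U}_{\Omega}^H\bm{U}_{\Omega})^{-1}$ whenever $\bm{U}_{\Omega}^H\bm{U}_{\Omega}$ is invertible, it suffices to show that, with probability at least $1-\delta$,
\[
\lambda_{\min}(\bm{U}_{\Omega}^H\bm{U}_{\Omega})\ge(1-\gamma)\frac{m}{n_1 n_3}.
\]
As in the tubal case I would use the standing normalization that the columns of $\bm{U}=\text{lmat}(\mathcal{U})$ are orthonormal, so that $\bm{U}^H\bm{U}=\bm{I}_{n_2 n_3}$ and $\bm{P}_{\bm{S}}=\bm{U}\bm{U}^H$.

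The first step is to write $\bm{U}_{\Omega}^H\bm{U}_{\Omega}$ as a sum of independent rank-one terms. Because the entries are sampled with replacement, the index set $\Omega\subset[n_1]\times[n_3]$ corresponds to $m$ i.i.d. indices $s_1,\dots,s_m$ drawn uniformly from the $n_1 n_3$ rows of $\bm{U}$, and, by the zero-padded definition of $\bm{U}_{\Omega}$,
\[
\bm{U}_{\Omega}^H\bm{U}_{\Omega}=\sum_{k=1}^{m}\bm{u}_{s_k}\bm{u}_{s_k}^H,
\]
where $\bm{u}_s$ denotes the $s$-th row of $\bm{U}$ regarded as a column vector. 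Each summand is positive semidefinite with mean $\mathbb{E}[\bm{u}_{s_k}\bm{u}_{s_k}^H]=\frac{1}{n_1 n_3}\bm{U}^H\bm{U}=\frac{1}{n_1 n_3}\bm{I}$, so $\lambda_{\min}(\mathbb{E}[\bm{U}_{\Omega}^H\bm{U}_{\Omega}])=\frac{m}{n_1 n_3}=:\mu_{\min}$. For the per-term spectral norm I would invoke the coherence: $\|\bm{u}_{s_k}\bm{u}_{s_k}^H\|=\|\bm{u}_{s_k}\|_2^2=\|\bm{P}_{\bm{S}}\bm{e}_{s_k}\|_2^2\le\frac{n_2}{n_1}\mu(\bm{S})=:R$, where the last inequality is exactly the definition of $\mu(\bm{S})$.

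The second step applies the lower-tail matrix Chernoff bound to the $n_2 n_3$-dimensional positive-semidefinite sum:
\[
\Pr\!\left[\lambda_{\min}(\bm{U}_{\Omega}^H\bm{U}_{\Omega})\le(1-\gamma)\mu_{\min}\right]\le n_2 n_3\,\exp\!\left(-\frac{3}{8}\,\gamma^2\,\frac{\mu_{\min}}{R}\right).
\]
Substituting $\mu_{\min}/R=m/(n_2 n_3\mu(\bm{S}))$ together with the stated $\gamma^2=\frac{8n_2 n_3\mu(\bm{S})}{3m}\log(\frac{2n_2 n_3}{\delta})$ makes the exponent equal to $-\log(\frac{2n_2 n_3}{\delta})$, so the failure probability is at most $n_2 n_3\cdot\frac{\delta}{2n_2 n_3}=\frac{\delta}{2}\le\delta$. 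On the complementary event, $\lambda_{\min}(\bm{U}_{\Omega}^H\bm{U}_{\Omega})\ge(1-\gamma)\frac{m}{n_1 n_3}>0$, so the inverse exists, and inverting gives the claim. The hypothesis $m\ge\frac{8}{3}n_2 n_3\mu(\bm{S})\log(\frac{2n_2 n_3}{\delta})$ is precisely what forces $\gamma\le1$, as the lemma requires.

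I expect the main obstacle to be the bookkeeping around the matrix Chernoff bound: confirming that with-replacement sampling genuinely makes the summands i.i.d. so that $\mathbb{E}[\bm{U}_{\Omega}^H\bm{U}_{\Omega}]=\frac{m}{n_1 n_3}\bm{I}$, and selecting the $3/8$-exponent form of the inequality so that the stated $\gamma$ and the $8/3$ sample-complexity constant fall out cleanly. A secondary point to justify carefully is the identity $\|\bm{u}_s\|_2^2=\|\bm{P}_{\bm{S}}\bm{e}_s\|_2^2$, which relies on $\bm{P}_{\bm{S}}=\bm{U}\bm{U}^H$ and hence on the orthonormality of the columns of $\text{lmat}(\mathcal{U})$; without that normalization one would instead have to carry $\bm{P}_{\bm{S}}=\bm{U}(\bm{U}^H\bm{U})^{-1}\bm{U}^H$ and the transform constant $c$ explicitly through the estimate.
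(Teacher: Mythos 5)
Your proof is correct, and in substance it is the same argument as the source the paper cites for this lemma (\cite{Balzano2010}) and as the paper's own proof of the tubal analogue, Lemma \ref{tubal:lemma3}: reduce the bound on $\left\|(\bm{U}_{\Omega}^H\bm{U}_{\Omega})^{-1}\right\|$ to a minimum-eigenvalue bound, write $\bm{U}_{\Omega}^H\bm{U}_{\Omega}=\sum_{k=1}^m\bm{u}_{s_k}\bm{u}_{s_k}^H$ over i.i.d.\ uniformly drawn rows (matching the with-replacement convention the paper uses in its appendix proofs), control each summand by the coherence via $\|\bm{u}_s\|_2^2=\|\bm{P}_{\bm{S}}\bm{e}_s\|_2^2\leq\frac{n_2}{n_1}\mu(\bm{S})$, and invoke matrix concentration; your bookkeeping checks out ($\mu_{\min}/R=m/(n_2n_3\mu(\bm{S}))$, the exponent collapsing to $-\log(\frac{2n_2n_3}{\delta})$, and the hypothesis $m\geq\frac{8}{3}n_2n_3\mu(\bm{S})\log(\frac{2n_2n_3}{\delta})$ being exactly the condition $\gamma\leq 1$). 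The one genuine difference is packaging: the paper and \cite{Balzano2010} center the summands, $\bm{X}_k=\bm{u}_{s_k}\bm{u}_{s_k}^H-\frac{1}{n_1n_3}\bm{I}$, apply the matrix Bernstein inequality (Lemma \ref{lem:mat}) under the restriction $M\tau\leq m\rho^2$, and then convert $\bigl\|\sum_k\bm{X}_k\bigr\|\leq\gamma\frac{m}{n_1n_3}$ into the singular-value bound, whereas you quote a lower-tail matrix Chernoff inequality directly on the positive-semidefinite sum. A caution here: your ``$3/8$-exponent'' Chernoff form is not the textbook statement (whose lower tail has exponent $-\gamma^2\mu_{\min}/(2R)$, which would also close the argument with room to spare); it is precisely what the Bernstein route yields after bounding $m\rho^2+M\tau/3\leq\frac{4}{3}m\rho^2$, which is legitimate exactly because $\gamma<1$ --- the same restriction the paper imposes in the proof of Lemma \ref{tubal:lemma3} --- so you should either derive it that way from Lemma \ref{lem:mat} or cite the standard Chernoff exponent instead (the Bernstein prefactor is $2n_2n_3$ rather than your $n_2n_3$; either yields failure probability at most $\delta$). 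Finally, your explicit flagging of the normalization $\bm{U}^H\bm{U}=\bm{I}_{n_2n_3}$, needed for $\bm{P}_{\bm{S}}=\bm{U}\bm{U}^H$ and the coherence identity, is a point the paper leaves implicit: it holds for DFT, where $\mathrm{lmat}(\mathcal{U})=\mathrm{bcirc}(\mathcal{U})$ is unitarily similar to $\overline{\mathcal{U}}$, but is not automatic for DCT, where the similarity in $\mathrm{lmat}$ is non-unitary --- which is exactly why the constant $c$ appears in the tubal-sampling bounds --- so that caveat is well taken.
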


\begin{proof}[Proof of Theorem \ref{theo:element}]
Consider $\left\|\bm{t}_{\Omega }-\bm{P}_{\Omega }\bm{t}_{\Omega }\right\|_2^2 =\left\|\bm{y}_{\Omega }-\bm{P}_{\Omega }\bm{y}_{\Omega }\right\|_2^2 $. In order to apply these three Lemmas, we split $\left\|\bm{y}_{\Omega }-\bm{P}_{\Omega }\bm{y}_{\Omega }\right\|_2^2 $ into three terms and bound each with high probability.

Assume $\bm{U}_{\Omega }^{H}\bm{U}_{\Omega } $ is invertible, and according to \cite{Balzano2010}, we have
\begin{eqnarray*}
  \left\|\bm{y}_{\Omega }-\bm{P}_{\Omega }\bm{y}_{\Omega }\right\|_2^2 &=& \left\|\bm{y}_{\Omega }\right\|_2^2-\bm{y}_{\Omega }^{H} \bm{U}_{\Omega }(\bm{U}_{\Omega }^{H} \bm{U}_{\Omega })^{-1}\bm{U}_{\Omega }^{H}\bm{y}_{\Omega } \\
   &\geq& \left\|\bm{y}_{\Omega }\right\|_2^2-\left\|(\bm{U}_{\Omega }^{H} \bm{U}_{\Omega })^{-1}\right\|\left\|\bm{U}_{\Omega }^{H} \bm{y}_{\Omega }\right\|_2^2.
\end{eqnarray*}

Combining Lemma 4, Lemma 5, and Lemma 6, we have
\begin{equation*}
  \frac{m(1-\alpha)-n_2n_3\mu(\bm{S})\frac{\beta}{(1-\gamma)}}{n_1n_3} \left\|\bm{y}\right\|_2^2\leq \left\|\bm{y}_{\Omega }-\bm{P}_{\Omega }\bm{y}_{\Omega }\right\|_2^2\leq (1+\alpha)\frac{m}{n_1n_3}\left\|\bm{y}\right\|_2^2
\end{equation*}
with probability at least $1-4\delta$.
\end{proof}

\subsection{Main Results with DFT and DCT}
\label{ssec:resdft}

When the transform $\mathcal{L} $ is DFT, $\bm{M}=\bm{F}_{n_3} $, where $\bm{F}_{n_3} $ denotes the $n_3\times n_3$ DFT matrix. For $\mathcal{A}\in\mathbb{R}^{n_1\times n_2\times n_3} $, we have $\|\mathcal{L}(\mathcal{A})\|_F=\sqrt{n_3}\|\mathcal{A}\|_F $. Moreover, $\theta=0$, that means $\text{unfold}(\mathcal{T})\in\bm{S}^\bot$ for all $\mathcal{T}\in\mathcal{S}^\bot$. Furthermore, the coherence of $\bm{S}$ and $\mathcal{S}$ are equivalent, that means $\mu(\bm{S})=\mu(\mathcal{S}) $. Thus, for the transform $\mathcal{L}$ with DFT, we have Corollary \ref{coro:tubal} for tubal-sampling and Corollary \ref{coro:element} for elementwise-sampling.

\begin{corollary}
\label{coro:tubal}

 Let $\delta>0$ and $m\geq \frac{8}{3}n_2\mu(\mathcal{S})\log(\frac{2n_2n_3}{\delta})$. Then with probability at least $1-4\delta$,
    \begin{equation}
      \frac{m(1-\alpha)-n_3n_2\mu(\mathcal{S})\frac{\beta}{(1-\gamma)}}{n_1}\left\|\mathcal{T}-\mathcal{P}\bullet\mathcal{T}\right\|_F^2\leq
    \left\|\mathcal{T}_{\Omega}-\mathcal{P}_{\Omega}\bullet\mathcal{T}_{\Omega}\right\|_F^2\leq(1+\alpha)\frac{m}{n_1}\left\|\mathcal{T}-\mathcal{P}\bullet\mathcal{T}\right\|_F^2
    \end{equation} holds, where $\alpha=\sqrt{\frac{2(n_1\|\mathcal{Y}\|_{\infty^*}^2-\|\mathcal{Y}\|_F^2)}{m\|\mathcal{Y}\|_F^2}\log(\frac{1}{\delta})}+ \frac{2(n_1\|\mathcal{Y}\|_{\infty^*}^2-\|\mathcal{Y}\|_F^2)}{3m\|\mathcal{Y}\|_F^2}\log(\frac{1}{\delta})$, $\beta=\left(1+2\sqrt{\log(\frac{1}{\delta})}\right)^2$, and $\gamma=\sqrt{\frac{8n_3n_2\mu(\mathcal{S})}{3m}\log(\frac{2n_2n_3}{\delta})}$.
\end{corollary}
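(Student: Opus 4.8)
The plan is to derive Corollary \ref{coro:tubal} directly from Theorem \ref{theo:tubal} by specializing the transform constant $c$ to the DFT, since this is the only quantity in which the two statements differ. Recall from Section \ref{ssec:natation} that $c$ is the constant satisfying $\|\mathcal{L}(\bm{a})\|_F=c\|\bm{a}\|_F$ for every tube $\bm{a}\in\mathbb{R}^{1\times 1\times n_3}$. Inspecting Theorem \ref{theo:tubal}, $c$ enters in exactly two places: the lower-bound numerator, through the term $c^2 n_2\mu(\mathcal{S})\frac{\beta}{(1-\gamma)}$, and the quantity $\gamma=\sqrt{\frac{8c^2 n_2\mu(\mathcal{S})}{3m}\log(\frac{2n_2n_3}{\delta})}$. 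The upper bound, the sample-size hypothesis, the failure probability $1-4\delta$, and the definitions of $\alpha$ and $\beta$ are all free of $c$ and therefore transfer verbatim.

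First I would evaluate $c$ for the DFT. By \eqref{def:trans} the transform acts on a tube as $\mathcal{L}(\bm{a})=\bm{F}_{n_3}\,\text{vec}(\bm{a})$, so $\|\mathcal{L}(\bm{a})\|_F=\|\bm{F}_{n_3}\,\text{vec}(\bm{a})\|_2$. Using the (unnormalized) DFT identity $\bm{F}_{n_3}^H\bm{F}_{n_3}=n_3\bm{I}_{n_3}$, I obtain $\|\bm{F}_{n_3}\,\text{vec}(\bm{a})\|_2^2=n_3\|\text{vec}(\bm{a})\|_2^2=n_3\|\bm{a}\|_F^2$, which is exactly the single-tube form of the stated relation $\|\mathcal{L}(\mathcal{A})\|_F=\sqrt{n_3}\|\mathcal{A}\|_F$. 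Hence $c=\sqrt{n_3}$ and $c^2=n_3$.

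Substituting $c^2=n_3$ into the conclusion of Theorem \ref{theo:tubal} turns $c^2 n_2\mu(\mathcal{S})\frac{\beta}{(1-\gamma)}$ into $n_3 n_2\mu(\mathcal{S})\frac{\beta}{(1-\gamma)}$ in the lower-bound numerator and turns $\gamma$ into $\sqrt{\frac{8n_3 n_2\mu(\mathcal{S})}{3m}\log(\frac{2n_2n_3}{\delta})}$, which is precisely the statement of Corollary \ref{coro:tubal}. There is no genuine analytic obstacle here; the single point demanding care is the normalization convention for $\bm{F}_{n_3}$, since a \emph{unitary} DFT would instead yield $c=1$ and a different constant. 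I would therefore make the convention $\bm{F}_{n_3}^H\bm{F}_{n_3}=n_3\bm{I}_{n_3}$ explicit at the outset, after which the corollary follows immediately by the substitution above.
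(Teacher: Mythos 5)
Your proof is correct and matches the paper's own derivation: the paper likewise obtains Corollary \ref{coro:tubal} by noting that for the (unnormalized) DFT, $\|\mathcal{L}(\mathcal{A})\|_F=\sqrt{n_3}\|\mathcal{A}\|_F$, i.e.\ $c^2=n_3$, and substituting this into Theorem \ref{theo:tubal}. Your treatment is simply more explicit about the normalization convention and about where $c$ enters the bound, which the paper leaves implicit.
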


\begin{corollary} \label{coro:element}
    Let $\delta>0$, $m\geq \frac{8}{3}n_2n_3\mu(\mathcal{S})\log(\frac{2n_2n_3}{\delta})$, then with probability at least $1-4\delta$
    \begin{equation}
      \frac{m(1-\alpha)-n_2n_3\mu(\mathcal{S})\frac{\beta}{(1-\gamma)}}{n_1n_3}\left\|\mathcal{T}-\mathcal{P}\bullet\mathcal{T}\right\|_F^2\leq
    \left\|\bm{t}_{\Omega }-\bm{P}_{\Omega }\bm{t}_{\Omega }\right\|_2^2 \leq(1+\alpha)\frac{m}{n_1n_3}\left\|\mathcal{T}-\mathcal{P}\bullet\mathcal{T}\right\|_F^2
    \end{equation} holds, where $\alpha=\sqrt{\frac{2(n_1n_3\|\bm{y}\|_\infty^2-\|\bm{y}\|_2^2)}{m\|\bm{y}\|_2^2}\log(\frac{1}{\delta})}+ \frac{2(n_1n_3\|\bm{y}\|_\infty^2-\|\bm{y}\|_2^2)}{3m\|\bm{y}\|_2^2}\log(\frac{1}{\delta})$, $\beta=\left(1+2\sqrt{\log(\frac{1}{\delta})}\right)^2$,  $\gamma=\sqrt{\frac{8n_2n_3\mu(\mathcal{S})}{3m}\log(\frac{2n_2n_3}{\delta})}$.
\end{corollary}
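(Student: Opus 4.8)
The plan is to obtain Corollary~\ref{coro:element} as a direct specialization of Theorem~\ref{theo:element} to the case where $\mathcal{L}$ is the DFT, using the three DFT-specific facts recorded immediately above the corollary: the norm scaling $\|\mathcal{L}(\mathcal{A})\|_F=\sqrt{n_3}\|\mathcal{A}\|_F$, the vanishing of the principal angle $\theta=0$, and the coherence identity $\mu(\bm{S})=\mu(\mathcal{S})$. Granting these, I would set $\cos^2(\theta)=1$ and replace $\mu(\bm{S})$ by $\mu(\mathcal{S})$ throughout the statement of Theorem~\ref{theo:element}: in the sample-size hypothesis, in the constant $\gamma$, and in both the lower and upper bounds. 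The resulting expressions coincide verbatim with the claimed inequalities, while the confidence level $1-4\delta$ and the formula for $\alpha$ carry over unchanged. Observe that, unlike the tubal case of Theorem~\ref{theo:tubal}, the constant $c$ never appears in the elementwise bound, since the entire elementwise argument already takes place in the unfolded vector space $\mathbb{R}^{n_1n_3}$; hence the only quantities that actually change are $\cos^2(\theta)$ and the coherence symbol, and no new probabilistic estimate is required.

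Thus the substance lies entirely in the two DFT identities, and I would establish both through the block-circulant representation $\text{lmat}(\mathcal{U})=\text{bcirc}(\mathcal{U})$. For $\theta=0$, I would use that $\text{bcirc}(\cdot)$ is a ring homomorphism for the DFT~\cite{Kilmer2013}, respecting $\bullet$ and sending $(\cdot)^{\dagger}$ to $(\cdot)^H$, so that the tensor projection unfolds exactly to the vector projection, $\text{bcirc}(\mathcal{P})=\bm{P}_{\bm{S}}$. Since $\text{unfold}$ is an isometry carrying the Frobenius inner product to the Euclidean one, it maps $\mathcal{S}$ onto $\bm{S}$ and hence $\mathcal{S}^\bot$ onto $\bm{S}^\bot$; therefore for $\mathcal{Y}\in\mathcal{S}^\bot$ one has $\bm{P}_{\bm{S}^\bot}\text{unfold}(\mathcal{Y})=\text{unfold}(\mathcal{Y})$, which by \eqref{equa:angle} forces $\cos(\theta)=1$. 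This step is exactly where the DFT is special: for a general transform the homomorphism fails, $\text{unfold}(\mathcal{S}^\bot)\neq\bm{S}^\bot$, and $\theta$ is genuinely nonzero.

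For the coherence identity I would exploit that $\bm{P}_{\bm{S}}=\text{bcirc}(\mathcal{P})$ is block circulant. The column of a block-circulant matrix indexed by frontal slice $b$ and within-block row $j$ has squared norm $\sum_{s=1}^{n_3}\|\mathcal{P}^{(s)}(:,j)\|_2^2$, which is independent of $b$ because a cyclic shift of the block rows merely permutes the summands. On the other hand, since for the DFT the unity tube $\bm{e}=\mathcal{L}^{-1}(\bm{1})$ is the impulse, $\text{unfold}(\mathcal{E}_j)$ is precisely the single Euclidean basis vector sitting in the first slice at row $j$, so $\|\mathcal{P}\bullet\mathcal{E}_j\|_F^2=\sum_{s=1}^{n_3}\|\mathcal{P}^{(s)}(:,j)\|_2^2$. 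Combining the two, the slice-independence promotes the tensor maximum over the $n_1$ basis tensors $\mathcal{E}_j$ to the vector maximum over all $n_1n_3$ standard basis vectors defining $\mu(\bm{S})$, and since the common prefactor $n_1/n_2$ already matches, I conclude $\mu(\mathcal{S})=\mu(\bm{S})$.

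The main obstacle is not the substitution, which is mechanical, but verifying this last coherence identity cleanly, namely confirming that the block index drops out of the block-circulant column norm and that $\text{unfold}(\mathcal{E}_j)$ collapses to a single basis vector under the DFT normalization. Once the three DFT identities are in place, Corollary~\ref{coro:element} follows from Theorem~\ref{theo:element} by inspection.
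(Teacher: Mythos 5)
Your proposal is correct and takes essentially the same route as the paper: the paper obtains Corollary~\ref{coro:element} by substituting the DFT-specific facts $\cos^2(\theta)=1$ and $\mu(\bm{S})=\mu(\mathcal{S})$ directly into Theorem~\ref{theo:element}, exactly as you do. The only difference is that the paper merely asserts these two identities without proof, whereas you additionally justify them via the block-circulant representation $\text{lmat}(\mathcal{U})=\text{bcirc}(\mathcal{U})$; those justifications are sound and fill in a step the paper leaves implicit.
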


When the transformation $\mathcal{L} $ is DCT, $\bm{M}=\bm{W}^{-1}\bm{C}_{n_3}(\bm{I}_{n_3}+\bm{Z}_{n_3}) $, where $\bm{C}_{n_3}$ denotes the $n_3\times n_3$ DCT matrix, $\text{diag}(\bm{W}=\bm{C}_{n_3}(:,1)) $ is the diagonal matrix made of the first column of $\bm{C}_{n_3}$, $\bm{I}_{n_3}$ is the $n_3\times n_3$ identity matrix, and $\bm{Z}_{n_3}$ is the $n_3\times n_3$ circular upshift matrix \cite{TTPRO2015}. Then $\|\bm{M}\|=\|\bm{W}^{-1}\bm{C}_{n_3}(\bm{I}_{n_3}+\bm{Z}_{n_3})\|\leq\|\bm{W}^{-1}\|\|\bm{I}_{n_3}+\bm{Z}_{n_3}\|\leq \sqrt{n_3} $, and $0<c\leq\sqrt{n_3} $. Hence, for tubal-sampling, we have the following Corollary.

\begin{corollary}
With the same $\delta $, $m $, $\alpha $, $\beta $, $\gamma $ given in Theorem \ref{theo:tubal}, we have
\begin{equation}
      \frac{m(1-\alpha)-n_3n_2\mu(\mathcal{S})\frac{\beta}{(1-\gamma)}}{n_1}\left\|\mathcal{T}-\mathcal{P}\bullet\mathcal{T}\right\|_F^2\leq
    \left\|\mathcal{T}_{\Omega}-\mathcal{P}_{\Omega}\bullet\mathcal{T}_{\Omega}\right\|_F^2\leq(1+\alpha)\frac{m}{n_1}\left\|\mathcal{T}-\mathcal{P}\bullet\mathcal{T}\right\|_F^2
\end{equation}
with the probability at least $1-4\delta $.
\end{corollary}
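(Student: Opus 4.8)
The plan is to derive this corollary as an immediate specialization of Theorem \ref{theo:tubal} to the DCT case, exploiting the fact that the only transform-dependent ingredient in that theorem is the constant $c$ (defined via $\|\mathcal{L}(\bm{a})\|_F = c\|\bm{a}\|_F$ with $0 < c \leq \|\bm{M}\|$). First I would recall from the paragraph preceding the statement that for the DCT the associated matrix obeys $\|\bm{M}\| = \|\bm{W}^{-1}\bm{C}_{n_3}(\bm{I}_{n_3}+\bm{Z}_{n_3})\| \leq \sqrt{n_3}$, so that $0 < c \leq \sqrt{n_3}$ and hence $c^2 \leq n_3$. This single inequality is the entire engine of the proof.

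Next I would apply Theorem \ref{theo:tubal} verbatim, using the DCT's constant $c$: with probability at least $1-4\delta$ the estimator $\|\mathcal{T}_{\Omega}-\mathcal{P}_{\Omega}\bullet\mathcal{T}_{\Omega}\|_F^2$ lies between $\frac{m(1-\alpha)-c^2 n_2\mu(\mathcal{S})\frac{\beta}{1-\gamma}}{n_1}\|\mathcal{T}-\mathcal{P}\bullet\mathcal{T}\|_F^2$ and $(1+\alpha)\frac{m}{n_1}\|\mathcal{T}-\mathcal{P}\bullet\mathcal{T}\|_F^2$. The upper bound contains no $c$, so it carries over unchanged. For the lower bound I would use $c^2 \leq n_3$ together with $\frac{\beta}{1-\gamma} > 0$ (which holds because $\beta > 0$ and $\gamma < 1$ under the theorem's hypotheses, the latter inherited from Lemma \ref{tubal:lemma3}) to obtain $c^2 n_2\mu(\mathcal{S})\frac{\beta}{1-\gamma} \leq n_3 n_2\mu(\mathcal{S})\frac{\beta}{1-\gamma}$. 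Subtracting the larger quantity from $m(1-\alpha)$ only shrinks the coefficient, giving $\frac{m(1-\alpha)-n_3 n_2\mu(\mathcal{S})\frac{\beta}{1-\gamma}}{n_1} \leq \frac{m(1-\alpha)-c^2 n_2\mu(\mathcal{S})\frac{\beta}{1-\gamma}}{n_1}$; since $\|\mathcal{T}-\mathcal{P}\bullet\mathcal{T}\|_F^2 \geq 0$, the weaker (DCT) coefficient in the statement remains a valid lower bound on $\|\mathcal{T}_{\Omega}-\mathcal{P}_{\Omega}\bullet\mathcal{T}_{\Omega}\|_F^2$.

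Combining the two bounds on the same $1-4\delta$ probability event then yields the corollary, with no new probabilistic estimate required — the argument is purely a deterministic relaxation of the theorem's lower bound through $c^2 \leq n_3$. The one point I would handle with care is the sign condition $1-\gamma > 0$: here $\gamma$ keeps its theorem form with the DCT's $c$ rather than with $n_3$, so the monotonicity of $\frac{1}{1-\gamma}$ is legitimate precisely when the theorem's sample-size hypothesis $m \geq \frac{8}{3}n_2\mu(\mathcal{S})\log(\frac{2n_2n_3}{\delta})$ secures $\gamma < 1$. I would flag this explicitly as inherited from Theorem \ref{theo:tubal}, since it is the only step where the DCT specialization touches the regime of validity rather than just the constants.
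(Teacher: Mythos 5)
Your proof is correct and follows essentially the same route as the paper, which likewise derives $\|\bm{M}\|\leq\sqrt{n_3}$ for the DCT, concludes $c^2\leq n_3$, and obtains the corollary by a purely deterministic relaxation of the lower bound in Theorem \ref{theo:tubal} (the upper bound being $c$-free). Your explicit flagging of the sign condition $1-\gamma>0$, needed for the monotone replacement of $c^2$ by $n_3$, is a point the paper leaves implicit (it enters only through the hypothesis of Lemma \ref{tubal:lemma3}), but it does not alter the argument.
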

Moreover, we have the same bounds of $\left\|\bm{t}_{\Omega }-\bm{P}_{\Omega }\bm{t}_{\Omega }\right\|_2^2 $ as Theorem \ref{theo:element} for elementwise-sampling.

\section{Matched Subspace Detection}
\label{sec:test}

The detection will be given in this section based on the former residual energy estimation both for noiseless data and noisy data.

\subsection{Detection with Noiseless Data}
\label{ssub:noiseless}

Recall that the hypotheses are:
\begin{equation}\label{h}
  \left\{\begin{array}{ccc}
    \mathcal{H}_{0}  & : & \mathcal{T}\in \mathcal{S}; \\
   \mathcal{H}_{1} & : & \mathcal{T}\notin \mathcal{S}.
  \end{array}
  \right.
\end{equation}

Under tubal-sampling, the test statistic is
\begin{equation}\label{de3}
  \text{t}(\mathcal{T}_{\Omega})=\|\mathcal{T}_{\Omega}- \mathcal{P}_{\Omega}\bullet\mathcal{T}_{\Omega}\|_F^2 \overset{\mathcal{H}_{1}}{\underset{\mathcal{H}_{0}}{ \gtrless}} \eta,
\end{equation}
while under elementwise-sampling
\begin{equation}\label{de1}
  \text{t}(\mathcal{T}_{\Omega})=\|\bm{t}_{\Omega}- \bm{P}_{\Omega}\bm{t}_{\Omega}\|_2^2\overset{\mathcal{H}_{1}}{\underset{\mathcal{H}_{0}}{ \gtrless}} \eta.
\end{equation}

In the noiseless case, the detection threshold $\eta=0$. For tubal-sampling, Theorem \ref{theo:tubal} shows that for $\delta>0 $, the detection probability is $ P_D=\mathbb{P} [\text{t}(\mathcal{T}_{\Omega})>0|\mathcal{H}_{1}]\geq 1-4\delta$  provided $m\geq\frac{8}{3}n_2\mu(\mathcal{S})\log(\frac{2n_2}{\delta})$. For elementwise-sampling, Theorem \ref{theo:element} the probability of detection is $ P_D=\mathbb{P} [\text{t}(\mathcal{T}_{\Omega})>0|\mathcal{H}_{1}]\geq 1-4\delta$ as long as $m\geq\frac{8}{3}n_2n_3\mu(\bm{S})\log(\frac{2n_2n_3}{\delta})$. When $\mathcal{T}\in\mathcal{S} $, $\|\mathcal{T}_{\Omega}-\mathcal{P}_{\Omega}\bullet\mathcal{T}_{\Omega}\|_F^2=0 $ and $\|\bm{t}_{\Omega}-\bm{p}_{\Omega}\bm{t}_{\Omega}\|_2^2=0$, so the false alarm probability is zero, i.e., $P_{FA}=\mathbb{P}[\text{t}(\mathcal{T}_{\Omega})>0|\mathcal{H}_{0}]=0 $ for both tubal-sampling and elementwise-sampling.

\subsection{Detection with Noisy Data}
\label{ssub:noisy}

For $\mathcal{T}\in\mathbb{R}^{n_1\times 1 \times n_3}$, assume there is Gaussian white noise $\mathcal{N}\in\mathbb{R}^{n_1\times 1 \times n_3}$ with entries $\mathcal{N}(i,1,k)\sim N(0,1),~ i\in[n_1], k\in[n_3]$ being independent. Then the test statistic can be calculated on $\mathcal{W}=\mathcal{T}_{\Omega}+\mathcal{N}_{\Omega} $, where $\mathcal{N}_{\Omega} $ is the sampling noise which is obtained by the same way as $\mathcal{T}_{\Omega} $. 

\subsubsection{Detection under Tubal-sampling}

For tubal-sampling, the test statistic is represented as
\begin{equation}\label{de4}
  \text{t}(\mathcal{W})=\|\mathcal{W}- \mathcal{P}_{\Omega}\bullet\mathcal{W}\|_F^2 \overset{\mathcal{H}_{1}}{\underset{\mathcal{H}_{0}}{ \gtrless}} \eta.
\end{equation}

Considering that
\begin{eqnarray*}
  \|\mathcal{W}- \mathcal{P}_{\Omega}\bullet \mathcal{W}\|_F^2 &=& \|\text{unfold}(\mathcal{W})- \text{lmat}(\mathcal{P}_{\Omega})\cdot \text{unfold}(\mathcal{W})\|_2^2 \\
   &=& \|(\bm{I}_{mn_3}- \text{lmat}(\mathcal{P}_{\Omega})) \cdot \text{unfold}(\mathcal{W})\|_2^2 \\
   &=&\text{unfold}(\mathcal{W})^H\cdot(\bm{I}_{mn_3}- \text{lmat}(\mathcal{P}_{\Omega}))^H \cdot(\bm{I}_{mn_3}- \text{lmat}(\mathcal{P}_{\Omega}))\cdot \text{unfold}(\mathcal{W}).
\end{eqnarray*}
Let $\bm{B}$ be an $mn_3\times mn_3 $ orthogonal matrix which converts $(\bm{I}- \text{lmat}(\mathcal{P}_{\Omega}))^H \cdot(\bm{I}_{mn_3}- \text{lmat}(\mathcal{P}_{\Omega})) $ to the diagonal form $\bm{\Lambda} =\text{diag}(\sigma_1^2,\cdots,\sigma_{mn_3}^2)= \bm{B}^H(\bm{I}_{mn_3}- \text{lmat}(\mathcal{P}_{\Omega}))^H\cdot (\bm{I}_{mn_3}- \text{lmat}(\mathcal{P}_{\Omega}))\bm{B}$, where $\sigma_i^2$, $i\in[mn_3]$ denote the singular values of $\bm{I}_{mn_3}- \text{lmat}(\mathcal{P}_{\Omega}) $. We assume the rank of $\bm{I}_{mn_3}- \text{lmat}(\mathcal{P}_{\Omega})$ is $(m-n_2)n_3 $, that means $\sigma_i^2>0 $ if $ i\in[(m-n_2)n_3]$ and zero if $i\in[mn_3]-[(m-n_2)n_3] $.
Then, $\text{t}(\mathcal{W})$ can be formulated as
\begin{equation}\label{equa:sumlam}
 \text{t}(\mathcal{W})=\sum\limits_{i=1}^{(m-n_2)n_3}\sigma_i^2\chi_1^2(\lambda_i^2),
\end{equation}
where $\chi_1^2(\lambda_i^2)$, $i\in[(m-n_2)n_3] $ are one freedom noncentral chi-squared random variables with noncentral parameter $\lambda_i^2= |\bm{B}\cdot\text{unfold}(\mathcal{T}_\Omega)(i,1)|^2 $. Moreover, $\lambda_i^2=0 $, for hypothesis $\mathcal{H}_0$.

Under tubal-sampling, the threshold $\eta=\eta_p $ is chosen to achieve a constant false alarm rate (CFAR) $P_{FA}= p $, that is
\begin{eqnarray}\label{equa:thre}
 P_{FA}&=&\mathbb{P}[\text{t}(\mathcal{W})>\eta_p|\mathcal{H}_{0}] \nonumber\\
 &=&  \mathbb{P}\left[\sum\limits_{i=1}^{(m-n_2)n_3}\sigma_i^2\chi_1^2(0)>\eta_p\right]= p.
\end{eqnarray}
Therefore, the detection probability is
\begin{eqnarray}\label{equa:pd}
 P_{D}&=&\mathbb{P}[\text{t}(\mathcal{W})>\eta_p|\mathcal{H}_{1}]\nonumber \\
 &=&  \mathbb{P}\left[\sum\limits_{i=1}^{(m-n_2)n_3}\sigma_i^2\chi_1^2(\lambda_i^2)>\eta_p\right].
\end{eqnarray}

Using the chi-square approximation derived in \cite{LIU2009853}, the approximations of Equation (\ref{equa:thre}) and (\ref{equa:pd}) is given as follows.

Let $c_k=\sum_{i=1}^{(m-n_2)n_3}\sigma_i^{2k}+\sum_{i=1}^{(m-n_2)n_3}\sigma_i^{2k}\lambda_i^{2k} $, $\mu_Q=c_1 $, $\sigma_Q= \sqrt{2c_2} $, $s_1=c_3/c_2^{\frac{3}{2}} $, and $s_2=c_4/c_2^2 $. We set $a=1/\left(s_1-\sqrt{s_1^2-s_2}\right) $ for $s_1^2>s_2$, while $a=1/s_1$ for $s_1^2\leq s_2$. Moreover, we set $\lambda_\chi^2=s_1a^3-a^2 $, and $l=a^2-2\lambda_\chi^2 $. Then we have the following.
\begin{eqnarray}\label{equa:approx}
 \mathbb{P}[\text{t}(\mathcal{W})>\eta_p] &=&\mathbb{P}\left[\sum\limits_{i=1}^{(m-n_2)n_3}\sigma_i^2\chi_1^2(\lambda_i^2)>\eta_p\right]  \nonumber\\
   &\approx&
   \mathbb{P}\left[\chi_l^2(\lambda_\chi^2)>\eta_p^*\sigma_{\chi}+\mu_{\chi}\right],
\end{eqnarray}
where $\eta_p^*=(\eta_p-\mu_Q)/\sigma_Q$, $\mu_{\chi}=1+\lambda_\chi^2 $, and $\sigma_{\chi}=\sqrt{2}a $. Therefore, we can obtain the threshold by the following
\begin{eqnarray*}
  P_{FA} &=& \mathbb{P}[\text{t}(\mathcal{W})>\eta_p|\mathcal{H}_0] \\
   &\approx& \mathbb{P}\left[\chi_l^2(\lambda_\chi^2)>\eta_p^*\sigma_{\chi}+\mu_{\chi}|\lambda_i^2=0,i\in[(m-n_2)n_3]\right]= p,
\end{eqnarray*}
and the detection probability is
\begin{eqnarray*}
  P_{D} &=& \mathbb{P}[\text{t}(\mathcal{W})>\eta_p|\mathcal{H}_1] \\
   &\approx& \mathbb{P}\left[\chi_l^2(\lambda_\chi^2)>\eta_p^*\sigma_{\chi}+\mu_{\chi}|\mathcal{H}_1\right].
\end{eqnarray*}

Consider a special case of detection problem (\ref{de4}) that $\bm{I}_{mn_3}- \text{lmat}(\mathcal{P}_{\Omega}) $ is an orthogonal projection onto an $(m-n_2)n_3 $ dimensional subspace of $\mathbb{R}^{mn_3} $, and $\sigma_i^2=1 $ if $i\in[(m-n_2)n_3]$ and zero if $i\in[mn_3]-[(m-n_2)n_3] $. Then Equation (\ref{equa:sumlam}) can be simplified as
\begin{equation*}
  \text{t}(\mathcal{W})=\chi_{(m-n_2)n_3}^2(\lambda^2),
\end{equation*}
where $\lambda^2=\sum_{i=1}^{(m-n_2)n_3}\lambda_i^2=\|(\bm{I}_{mn_3}- \text{lmat}(\mathcal{P}_{\Omega}))\cdot \text{unfold}(\mathcal{T}_\Omega)\|_2^2 $.
Then the threshold can be obtain by the following
\begin{equation*}
  P_{FA}=1- \mathbb{P}\left[\chi_{(m-n_2)n_3}^2(0)\leq\eta_p\right]= p,
\end{equation*}
and the detection probability is
\begin{equation*}
P_{D}=1- \mathbb{P}\left[\chi_{(m-n_2)n_3}^2(\lambda^2)\leq\eta_p\right].
\end{equation*}

\subsubsection{Detection under Elementwise-sampling}

For elementwise-sampling, we set $\bm{w}=\text{unfold}(\mathcal{W})=\bm{t}_\Omega+\text{unfold}(\mathcal{N}_\Omega) $, and then the test statistic can be represented as follows

\begin{equation}
  \text{t}(\mathcal{W})=\|\bm{w}- \bm{P}_{\Omega}\bm{w}\|_2^2 \overset{\mathcal{H}_{1}}{\underset{\mathcal{H}_{0}}{ \gtrless}}\eta.
\end{equation}

Considering $\text{t}(\mathcal{W})=\|\bm{w}- \bm{P}_{\Omega}\bm{w}\|_2^2= \bm{w}^H(\bm{I}_{m}- \bm{P}_{\Omega})^H(\bm{I}_{m}- \bm{P}_{\Omega})\bm{w} $, we can find an $m\times m $ orthogonal matrix $\bm{B} $ which can convert $(\bm{I}_{m}- \bm{P}_{\Omega})^H (\bm{I}_{m}- \bm{P}_{\Omega}) $ to the diagonal form $\bm{\Lambda} =\text{diag}(\sigma_1^2,\cdots,\sigma_{m}^2)= \bm{B}^H(\bm{I}_{m}- \bm{P}_{\Omega})^H (\bm{I}_{m}-\bm{P}_{\Omega})\bm{B}$, where $\sigma_i^2$, $i\in[m]$ denote the singular values of $\bm{I}_{m}- \bm{P}_{\Omega} $.

For $\bm{I}_{m}- \bm{P}_{\Omega} $ being the orthogonal projection onto an $m-n_2n_3 $ dimensional vector subspace of $\mathbb{R}^{m} $, we have $\sigma_i^2=1 $ if $i\in[m-n_2n_3] $ and zero if $i\in[m]-[m-n_2n_3] $. Then, $\text{t}(\mathcal{W})$ is a non-central $\chi^2$-distributed variable with degree of freedom $m-n_2n_3$ and non-centrality parameter $\lambda^2=\|\bm{t}_{\Omega}- \bm{P}_{\Omega}\bm{t}_{\Omega}\|_2^2 $. We can choose threshold $\eta=\eta_p $ to achieve a CFAR $P_{FA}= p $. That is
\begin{eqnarray*}
 P_{FA}&=&\mathbb{P}[\text{t}(\mathcal{W})>\eta_p|\mathcal{H}_{0}]\\
 &=&1- \mathbb{P}[\chi_{m-n_2n_3}^2(0)<\eta_p]= p,
\end{eqnarray*}
and the detection probability is
\begin{eqnarray*}
  P_D &=& \mathbb{P}[\text{t}(\mathcal{W})>\eta_p|\mathcal{H}_{1}]\\
  &=& 1-\mathbb{P} [\chi_{m-n_2n_3}^2(\lambda^2)\leq\eta_p].
\end{eqnarray*}

\section{Performance Evaluation}
\label{sec:experiment}

In this section, the performance of estimators and the corresponding detectors are evaluated by some simulations with synthetic data based on DFT and DCT. The synthetic data, generated according to the transform-based tensor model, serves as well-controlled inputs for testing and understanding our main theorems.

\subsection{Synthetic Data}

We set $n_1=50 $, $r=10 $, $n_3=50 $, and $\mathcal{S}$ is a $10$ dimensional subspace of $\mathbb{R}^{50\times 1\times 50} $. Moreover, $\mathcal{S}$ is spanned by the orthogonal columns of $\mathcal{U}\in\mathbb{R}^{50\times 10\times 50}  $.

First, we generate a tensor $\mathcal{A}$ of size $50\times 10\times 50$, and each entry follows uniformly distribute between 0 and 1. The $\mathcal{L}$-rank of $\mathcal{A}$ is $10$. Secondly, $\mathcal{A}$ is decomposed into three parts, $\mathcal{A}=\mathcal{U}\bullet\Sigma\bullet\mathcal{V}^{\dagger} $, by $\mathcal{L}$-SVD. Thirdly, $\mathcal{U}$ is divided into two parts, $\mathcal{U}=\left[\mathcal{U}_r~\widetilde{\mathcal{U}}_r\right] $, where $\mathcal{U}_r $ consists of the first $r$ columns of $\mathcal{U}$ and the rest of the columns compose $\widetilde{\mathcal{U}}_r $. Fourthly, we generate another two tensors, $\mathcal{C}_1$ of size $10\times 1\times 50 $ and $\mathcal{C}_2$ of size $40\times 1\times 50 $, respectively. All entries of $\mathcal{C}_1 $ and $\mathcal{C}_2 $ follow uniformly distribute between 0 and 1. Then, multiplying $\mathcal{U}_r $ and $\mathcal{C}_1 $ we obtain a signal in $\mathcal{S} $, and we obtain a signal in $\mathcal{S}^\bot $ while $\widetilde{\mathcal{U}}_r $ is multiplied by $\mathcal{C}_2 $. Finally, the signal is normalized, such that $\|\mathcal{T}\|_F^2=1  $.

\subsection{Performance Evaluation for Estimators}
\label{ssec:performEva}

\begin{figure}[t]

\begin{minipage}[t]{0.48\linewidth}
  \center
  \centerline{\includegraphics[width=8cm]{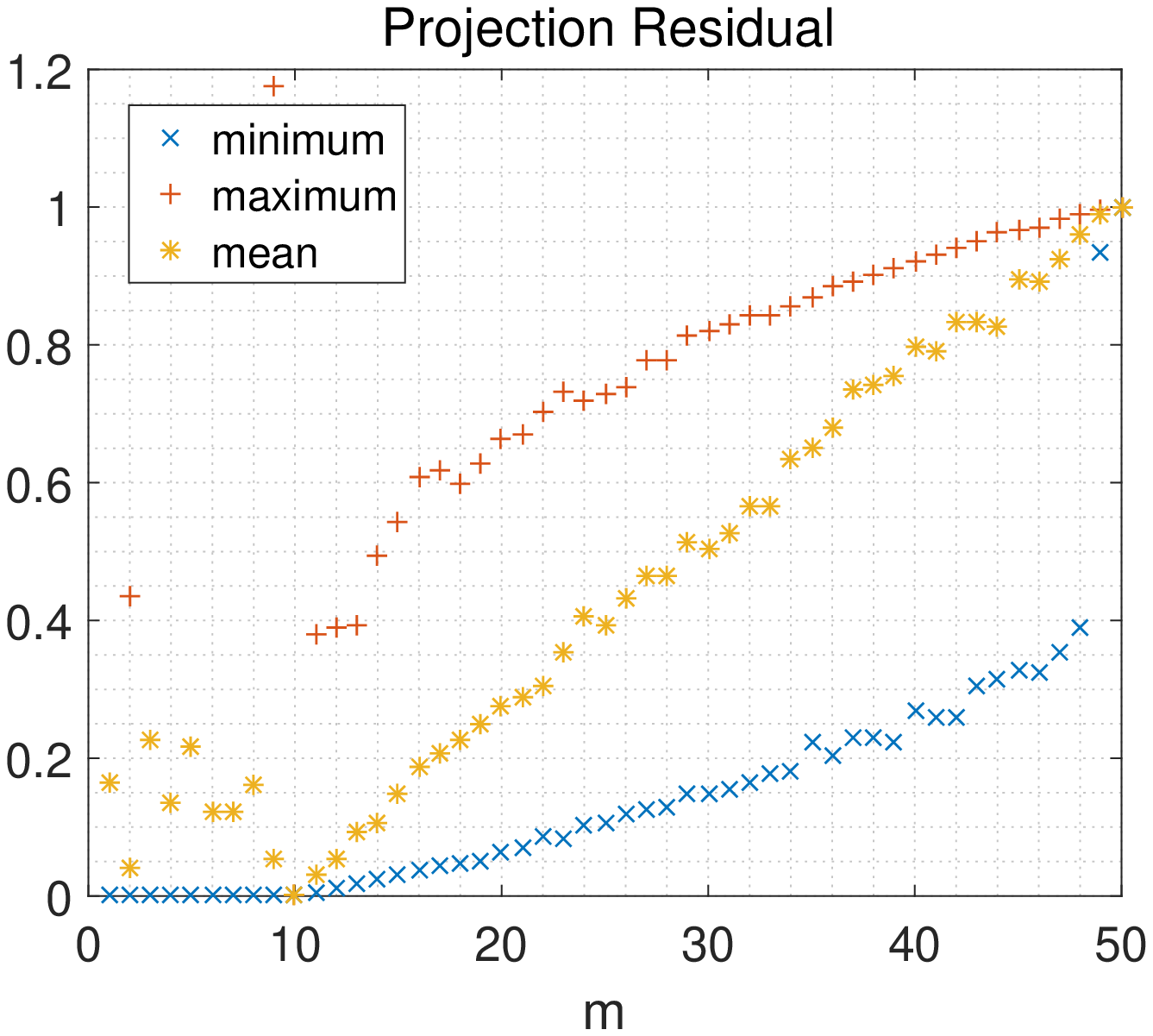}}
  \centerline{(a) $\mathcal{T}\in\mathcal{S}^\perp$}\medskip
\end{minipage}
\hfill
\begin{minipage}[t]{0.48\linewidth}
  \centering
  \centerline{\includegraphics[width=8cm]{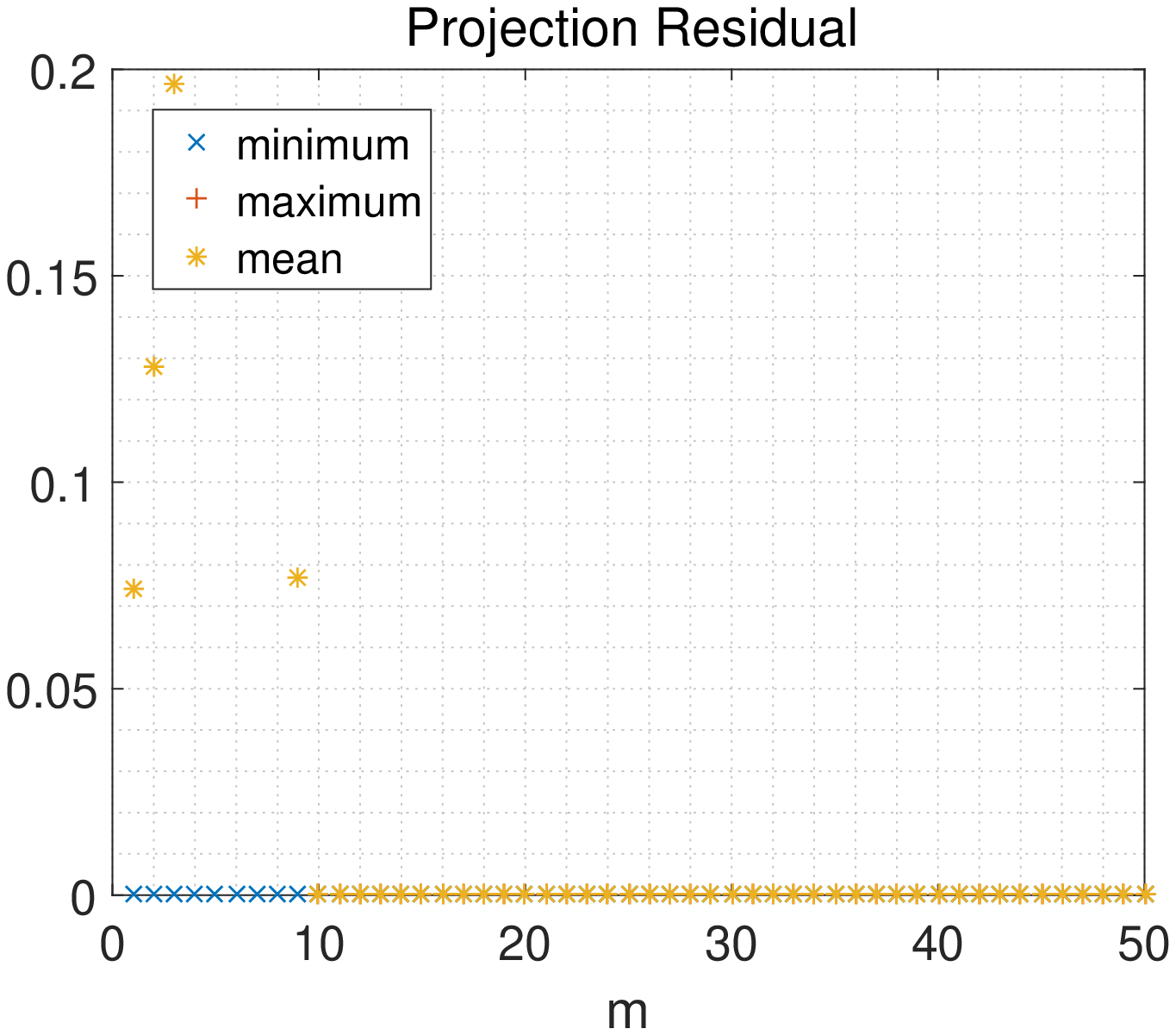}}
  \centerline{(b) $\mathcal{T}\in\mathcal{S}$}\medskip
\end{minipage}
\caption{Simulation results of $\|\mathcal{T}_\Omega-\mathcal{P}_\Omega\bullet\mathcal{T}_\Omega\|_F^2 $ based on DFT for tubal-sampling over 100 runs with $n_1=50 $, $n_2=10 $, $ n_3=50$ and $\mu(\mathcal{S})\approx 1.1 $. (a) $\mathcal{T}\in\mathcal{S} $, and (b) $\mathcal{T}\in\mathcal{S}^\bot $.}
\label{fig:tubal}
\end{figure}

\begin{figure}[t]

\begin{minipage}[t]{0.48\linewidth}
  \centering
  \centerline{\includegraphics[width=8cm]{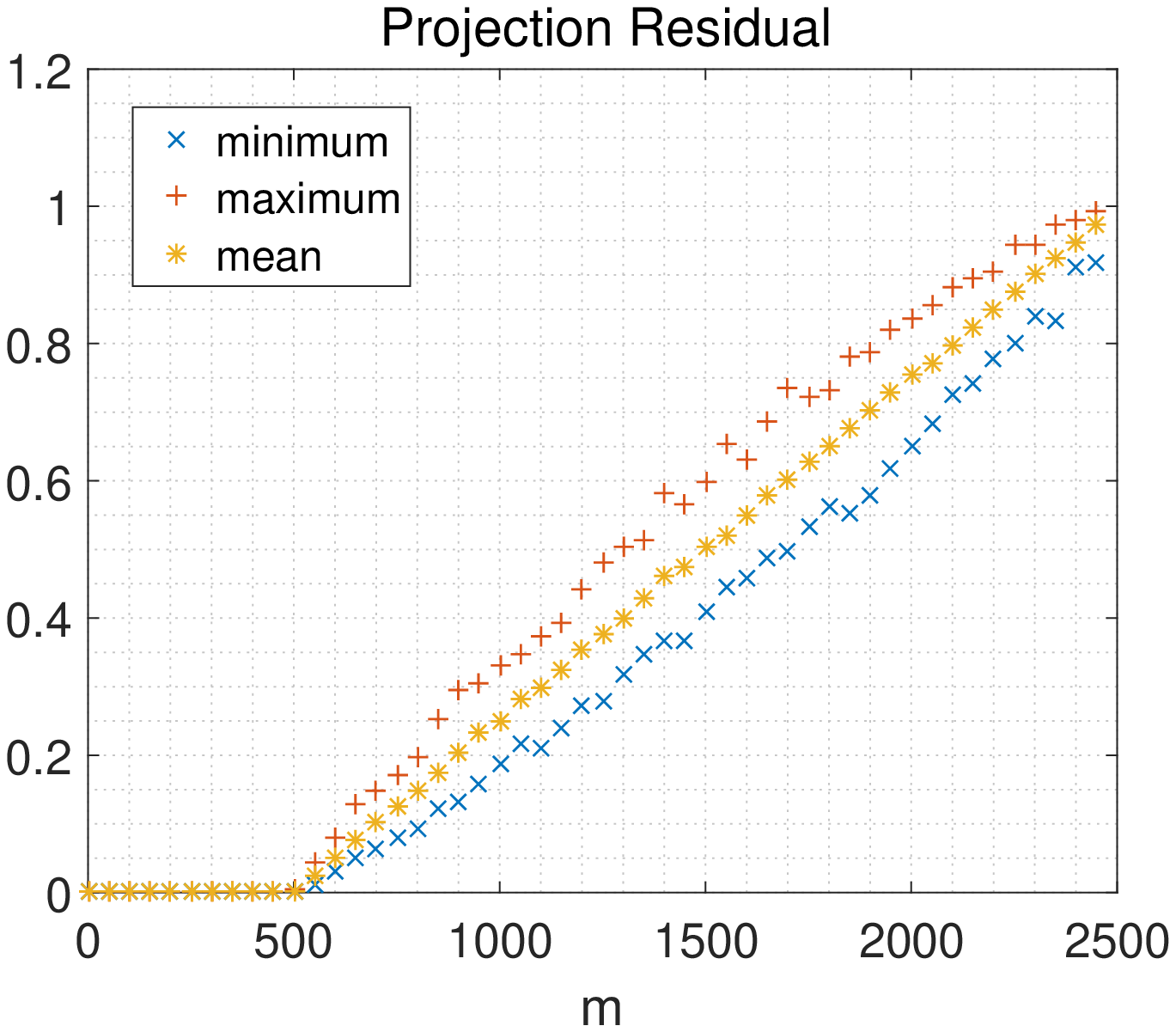}}
  \centerline{(a) $\mathcal{T}\in\mathcal{S}^\perp$}\medskip
\end{minipage}
\hfill
\begin{minipage}[t]{0.48\linewidth}
  \centering
  \centerline{\includegraphics[width=8cm]{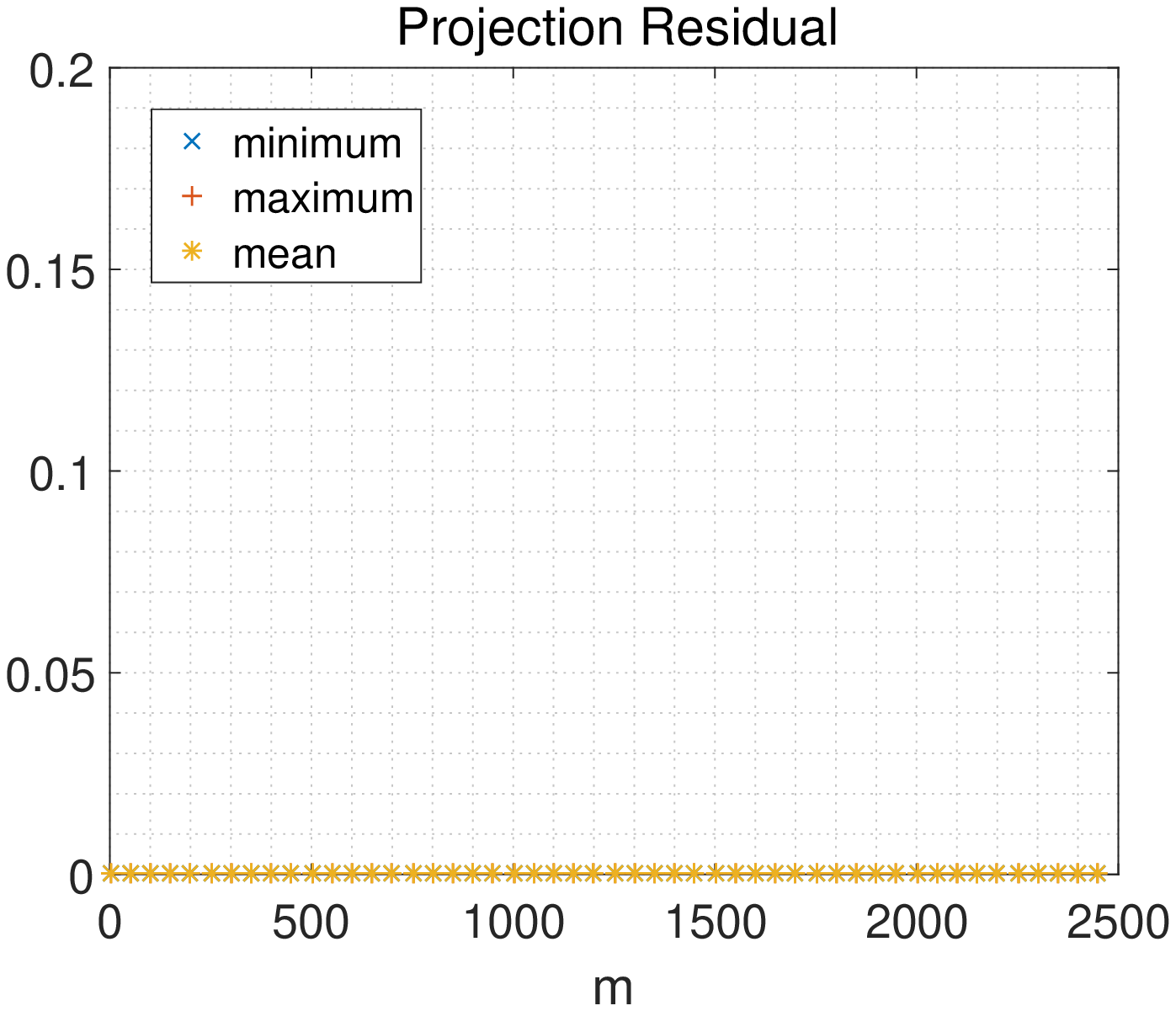}}
  \centerline{(b) $\mathcal{T}\in\mathcal{S}$}\medskip
\end{minipage}
\caption{Simulation results of $\left\|\bm{t}_{\Omega}-\bm{P}_{\Omega} \bm{t}_{\Omega}\right\|_2^2 $ based on DFT for elementwise-sampling over 100 runs with $n_1=50 $, $n_2=10 $, $ n_3=50$ and $\mu(\mathcal{S})\approx 1.1 $. (a) $\mathcal{T}\in\mathcal{S} $, and (b) $\mathcal{T}\in\mathcal{S}^\bot $.}
\label{fig:element}
\end{figure}

\begin{figure}[t]

\begin{minipage}[t]{0.48\linewidth}
  \center
  \centerline{\includegraphics[width=8cm]{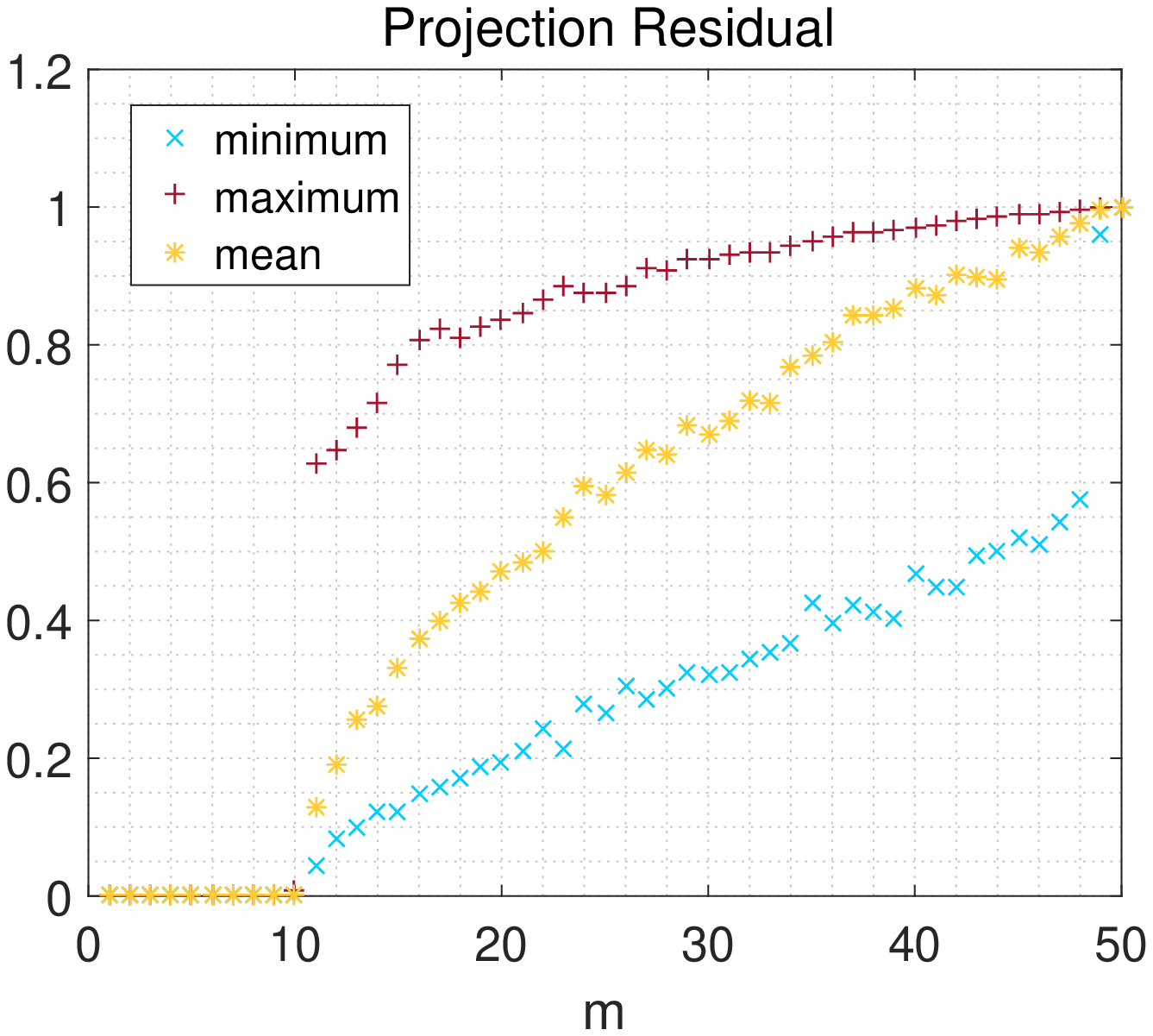}}
  \centerline{(a) $\mathcal{T}\in\mathcal{S}^\perp$}\medskip
\end{minipage}
\hfill
\begin{minipage}[t]{0.48\linewidth}
  \centering
  \centerline{\includegraphics[width=8cm]{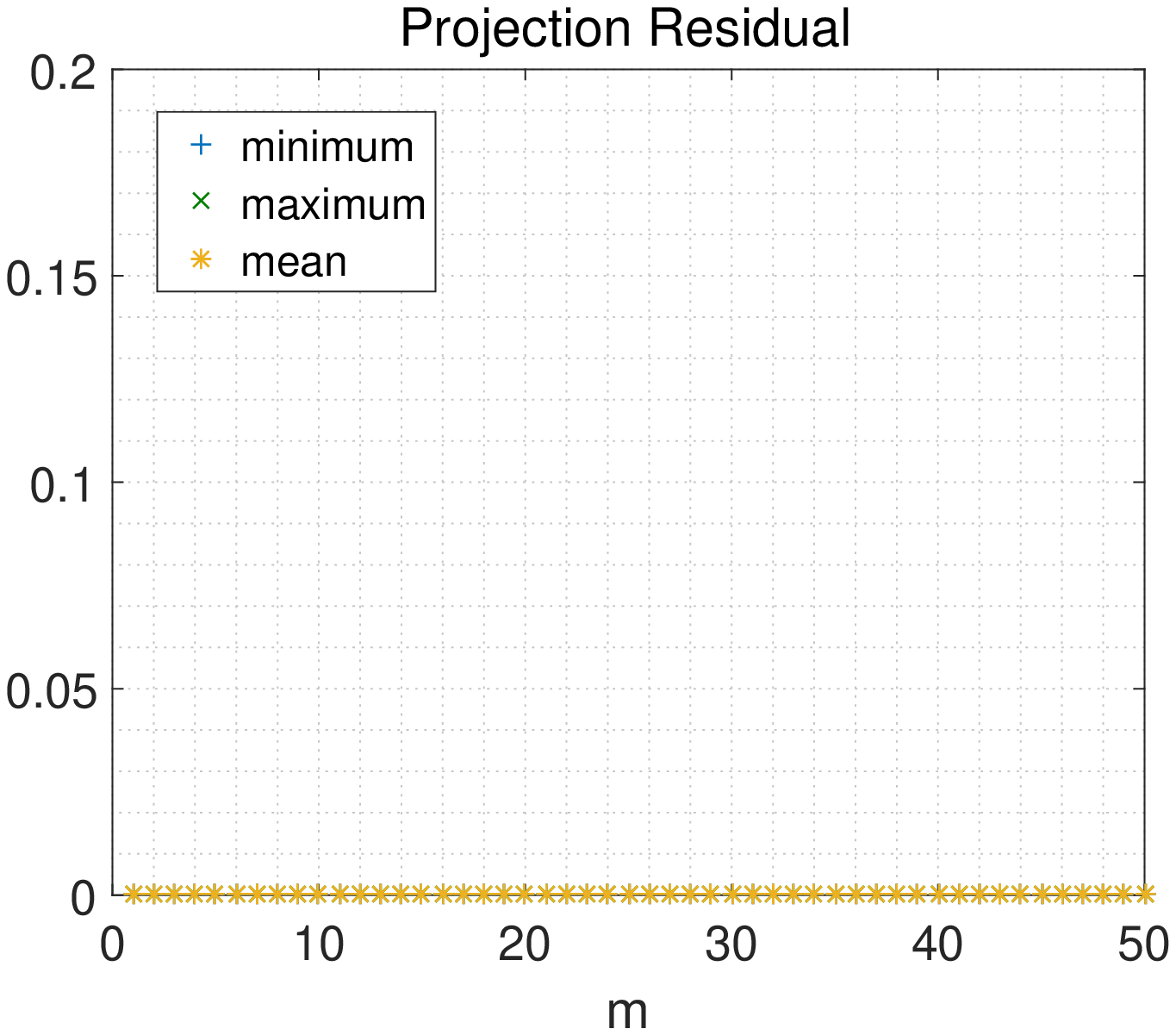}}
  \centerline{(b) $\mathcal{T}\in\mathcal{S}$}\medskip
\end{minipage}
\caption{Simulation results of $\|\mathcal{T}_\Omega-\mathcal{P}_\Omega\bullet\mathcal{T}_\Omega\|_F^2 $ based on DCT for tubal-sampling over 100 runs with $n_1=50 $, $n_2=10 $, $ n_3=50$ and $\mu(\mathcal{S})\approx 1.7 $. (a) $\mathcal{T}\in\mathcal{S} $, and (b) $\mathcal{T}\in\mathcal{S}^\bot $.}
\label{fig:dcttubal}
\end{figure}

\begin{figure}[t]

\begin{minipage}[t]{0.48\linewidth}
  \centering
  \centerline{\includegraphics[width=8cm]{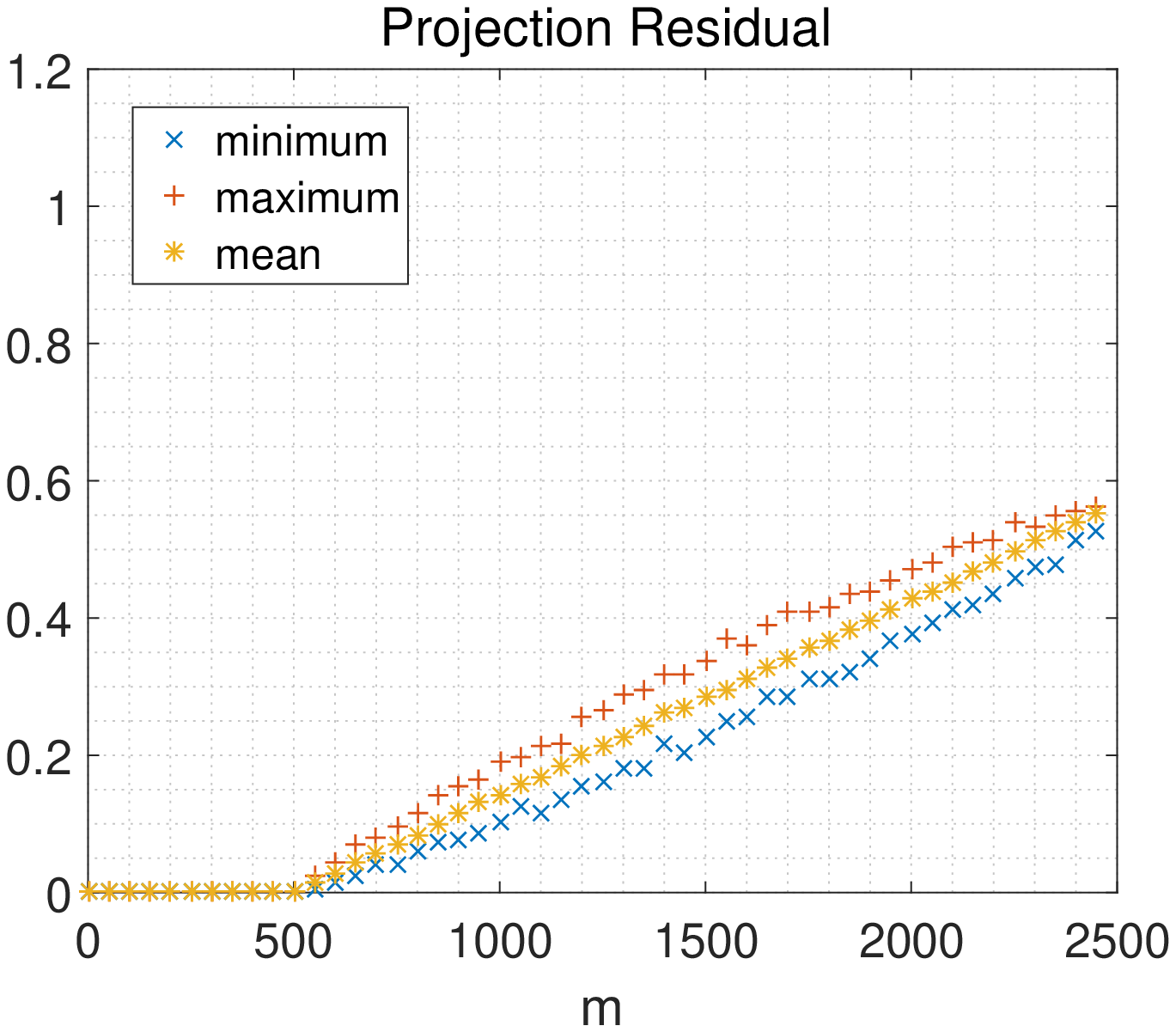}}
  \centerline{(a) $\mathcal{T}\in\mathcal{S}^\perp$}\medskip
\end{minipage}
\hfill
\begin{minipage}[t]{0.48\linewidth}
  \centering
  \centerline{\includegraphics[width=8cm]{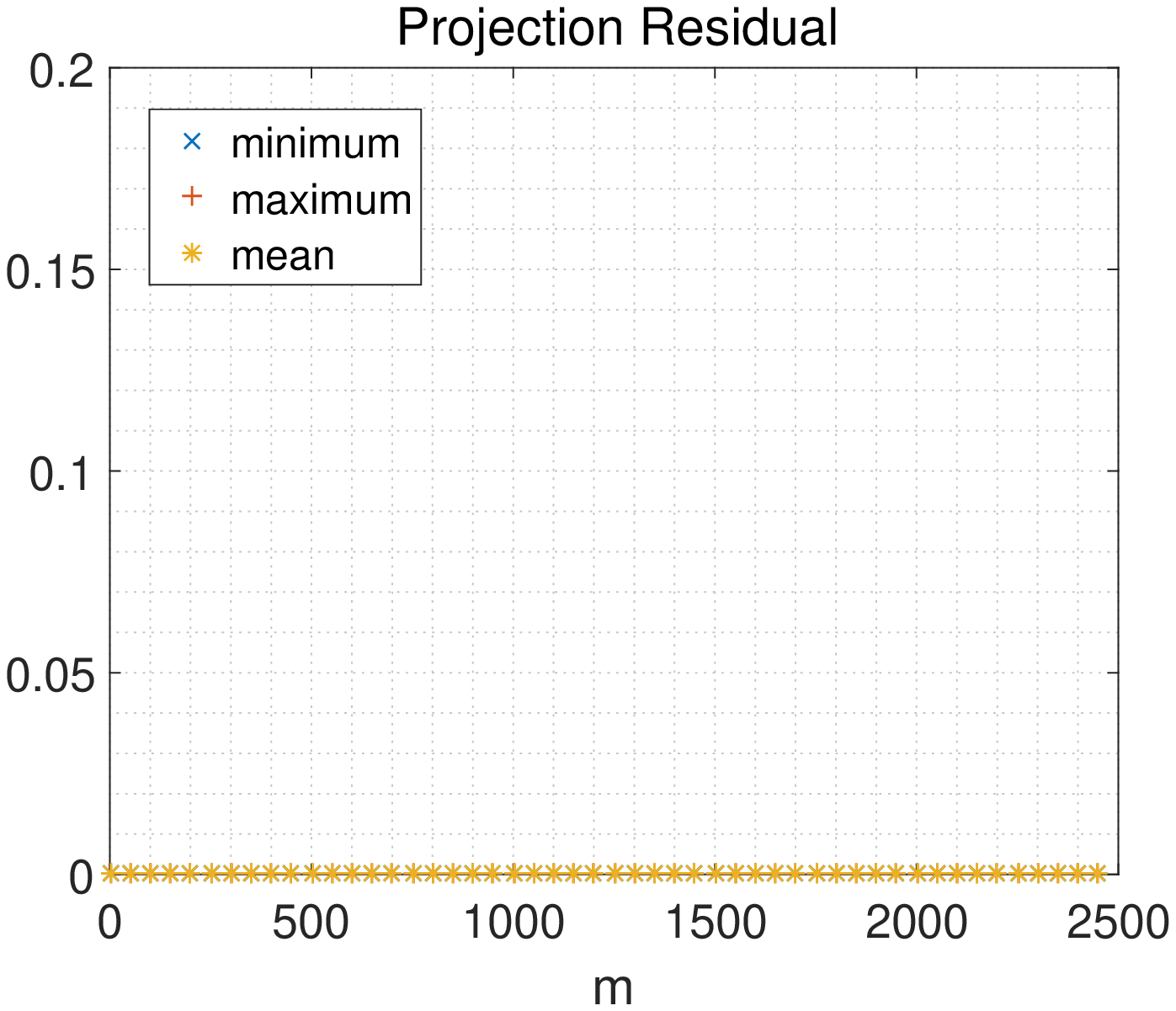}}
  \centerline{(b) $\mathcal{T}\in\mathcal{S}$}\medskip
\end{minipage}
\caption{Simulation results of $\left\|\bm{t}_{\Omega}-\bm{P}_{\Omega} \bm{t}_{\Omega}\right\|_2^2 $ based on DCT for elementwise-sampling over 100 runs with $n_1=50 $, $n_2=10 $, $ n_3=50$ and $\mu(\mathcal{S})\approx 1.7 $. (a) $\mathcal{T}\in\mathcal{S} $, and (b) $\mathcal{T}\in\mathcal{S}^\bot $.}
\label{fig:dctelement}
\end{figure}

Fig. \ref{fig:tubal}, and \ref{fig:element} show the performance of our estimators, and plots the minimum, maximum and mean value of $\left\|\mathcal{T}_{\Omega}-\mathcal{P}_{\Omega} \bullet\mathcal{T}_{\Omega}\right\|_F^2 $ and $\left\|\bm{t}_{\Omega}-\bm{P}_{\Omega} \bm{t}_{\Omega}\right\|_2^2 $ over $100$ simulations. For each value of the sample size $m$, we sample $100$ different $\Omega $ without replacement with fixed $\mathcal{T} $ and fixed $\mathcal{S} $.

Fig. \ref{fig:tubal} and Fig. \ref{fig:element} are based on DFT. Fig. \ref{fig:tubal} plots the projection residual energy $\left\|\mathcal{T}_{\Omega}-\mathcal{P}_{\Omega} \bullet\mathcal{T}_{\Omega}\right\|_F^2 $ with tubal-sampling. Fig. \ref{fig:tubal} (a) shows when $m$ is greater than the dimension of $\mathcal{S} $, $\left\|\mathcal{T}_{\Omega}-\mathcal{P}_{\Omega} \bullet\mathcal{T}_{\Omega}\right\|_F^2 >0$ for $\mathcal{T}\in\mathcal{S}^\bot $, and Fig. \ref{fig:tubal} (b) shows $\left\|\mathcal{T}_{\Omega}-\mathcal{P}_{\Omega} \bullet\mathcal{T}_{\Omega}\right\|_F^2 =0$ when $m>10$. However, when $m<10$, there exists some points that $\|\mathcal{T}_{\Omega}-\mathcal{P}_{\Omega} \bullet\mathcal{T}_{\Omega}\|_F^2>0 $ for $\mathcal{T}\in\mathcal{S} $, this is due to fast Fourier transform involves complex valued computation. Fig. \ref{fig:tubal} shows $\left\|\mathcal{T}_{\Omega}-\mathcal{P}_{\Omega} \bullet\mathcal{T}_{\Omega}\right\|_F^2 $ is approximate to $\frac{m}{n_1}\|\mathcal{T}-\mathcal{P}\bullet\mathcal{T}\|_F^2 $ with tubal-sampling. Fig. \ref{fig:element} plots the projection residual energy $\left\|\bm{t}_{\Omega}-\bm{P}_{\Omega} \bm{t}_{\Omega}\right\|_2^2 $ with elementwise-sampling. Similarly, Fig. \ref{fig:element} (a) and Fig. \ref{fig:element} (b) show $\left\|\bm{t}_{\Omega}-\bm{P}_{\Omega} \bm{t}_{\Omega}\right\|_2^2 $ is approximate to $\frac{m}{n_1n_3}\|\mathcal{T}-\mathcal{P}\bullet\mathcal{T}\|_F^2 $.

Fig. \ref{fig:dcttubal} and Fig. \ref{fig:dctelement} are based on DCT. Fig. \ref{fig:dcttubal} plots the projection residual energy $\left\|\mathcal{T}_{\Omega}-\mathcal{P}_{\Omega} \bullet\mathcal{T}_{\Omega}\right\|_F^2 $ with tubal-sampling, and Fig. \ref{fig:dctelement}  plots the projection residual energy $\left\|\bm{t}_{\Omega}-\bm{P}_{\Omega} \bm{t}_{\Omega}\right\|_2^2 $ with elementwise-sampling. Fig. \ref{fig:dcttubal} (a) and Fig. \ref{fig:dctelement} (a) show, for $\mathcal{T}\in\mathcal{S}^\bot $, the projection residual energy is always positive when $m>10$ with tubal-sampling and $m>10*50$ with elementwise-sampling. Fig. \ref{fig:dcttubal} (b) and Fig. \ref{fig:dctelement} (b) show the projection residual energy is always zero with any sample size for $\mathcal{T}\in\mathcal{S} $. Fig. \ref{fig:dcttubal} shows $\left\|\mathcal{T}_{\Omega}-\mathcal{P}_{\Omega} \bullet\mathcal{T}_{\Omega}\right\|_F^2 $ is approximate to $\frac{m}{n_1}\|\mathcal{T}-\mathcal{P}\bullet\mathcal{T}\|_F^2 $ for tubal-sampling, and Fig. \ref{fig:dctelement} shows $\left\|\bm{t}_{\Omega}-\bm{P}_{\Omega} \bm{t}_{\Omega}\right\|_2^2 $ is approximate to $0.58\frac{m}{n_1n_3}\|\mathcal{T}-\mathcal{P}\bullet\mathcal{T}\|_F^2 $.

\subsection{Performance Evaluation for Detectors}

\begin{figure}[t]

\begin{minipage}[t]{0.48\linewidth}
  \centering
  \centerline{\includegraphics[width=8cm]{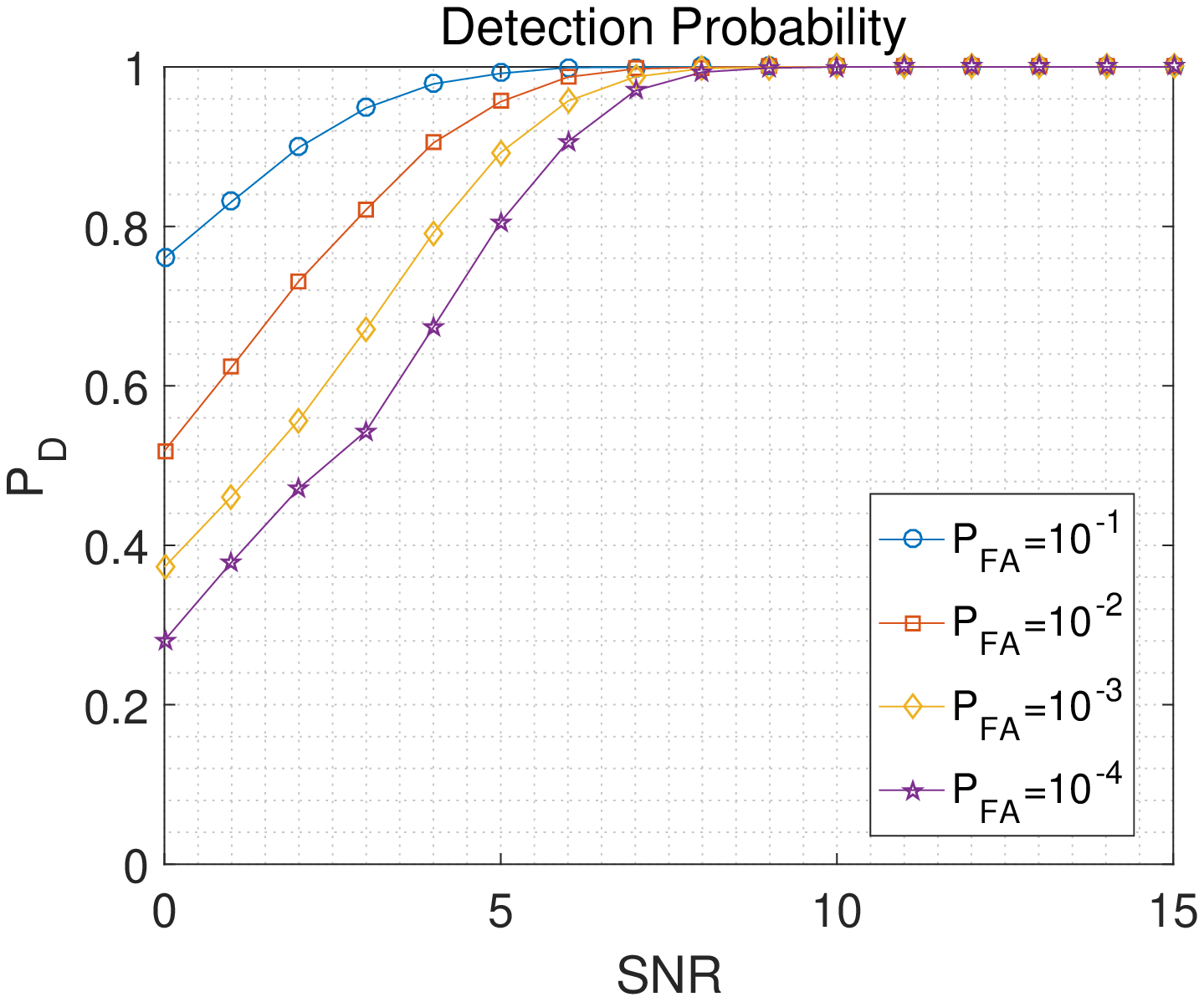}}
  \centerline{(a) tubal-sampling}\medskip
\end{minipage}
\hfill
\begin{minipage}[t]{0.48\linewidth}
  \centering
  \centerline{\includegraphics[width=8.4cm]{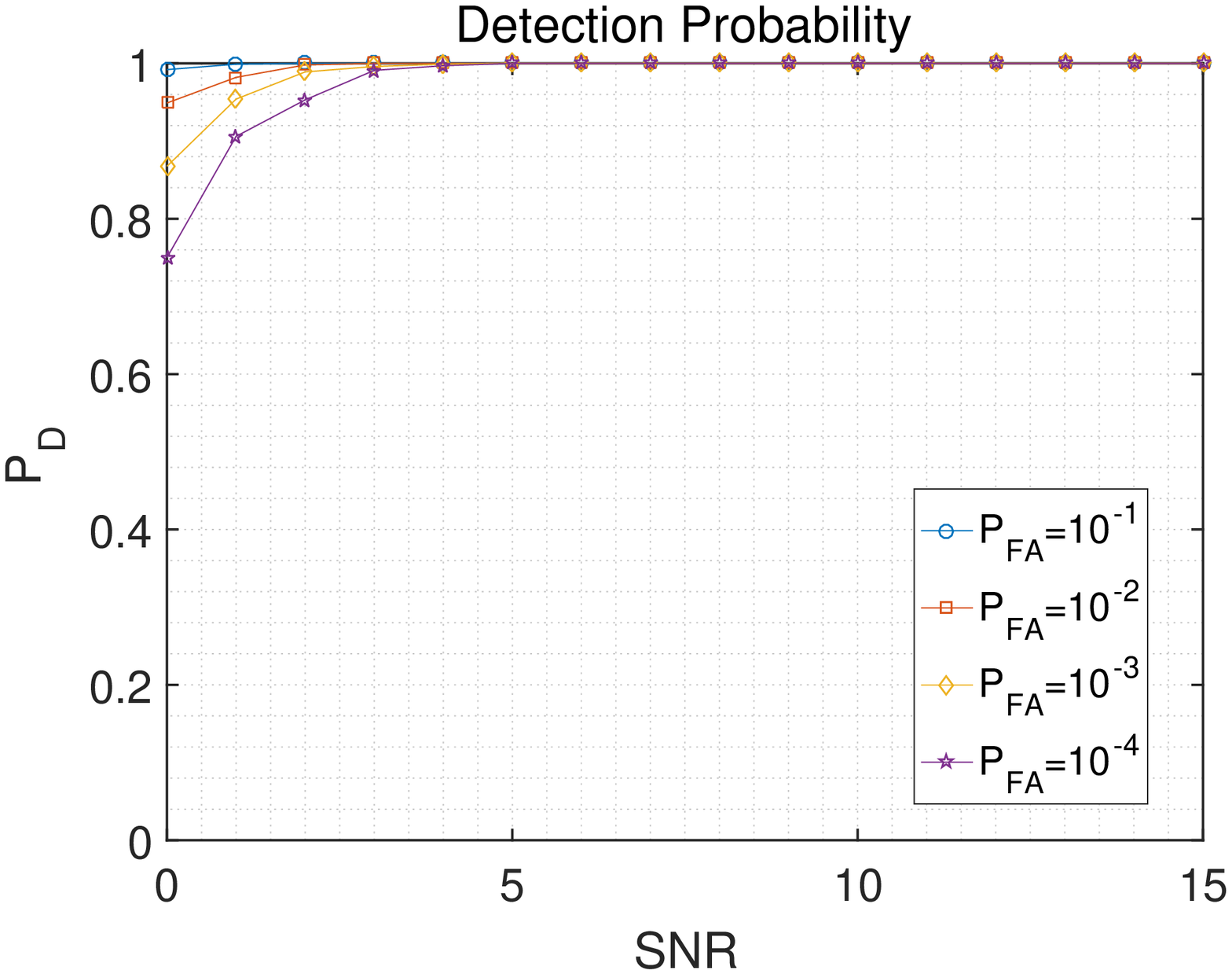}}
  \centerline{(b) elementwise-sampling}\medskip
\end{minipage}
\caption{The probability of detection based on DFT with different probabilities of false alarm. The sampling size $m=11$ for tubal-sampling, and $m=11\times 50$ for elementwise-sampling.}
\label{fig:detfft}
\end{figure}

\begin{figure}[t]

\begin{minipage}[t]{0.48\linewidth}
  \centering
  \centerline{\includegraphics[width=8cm]{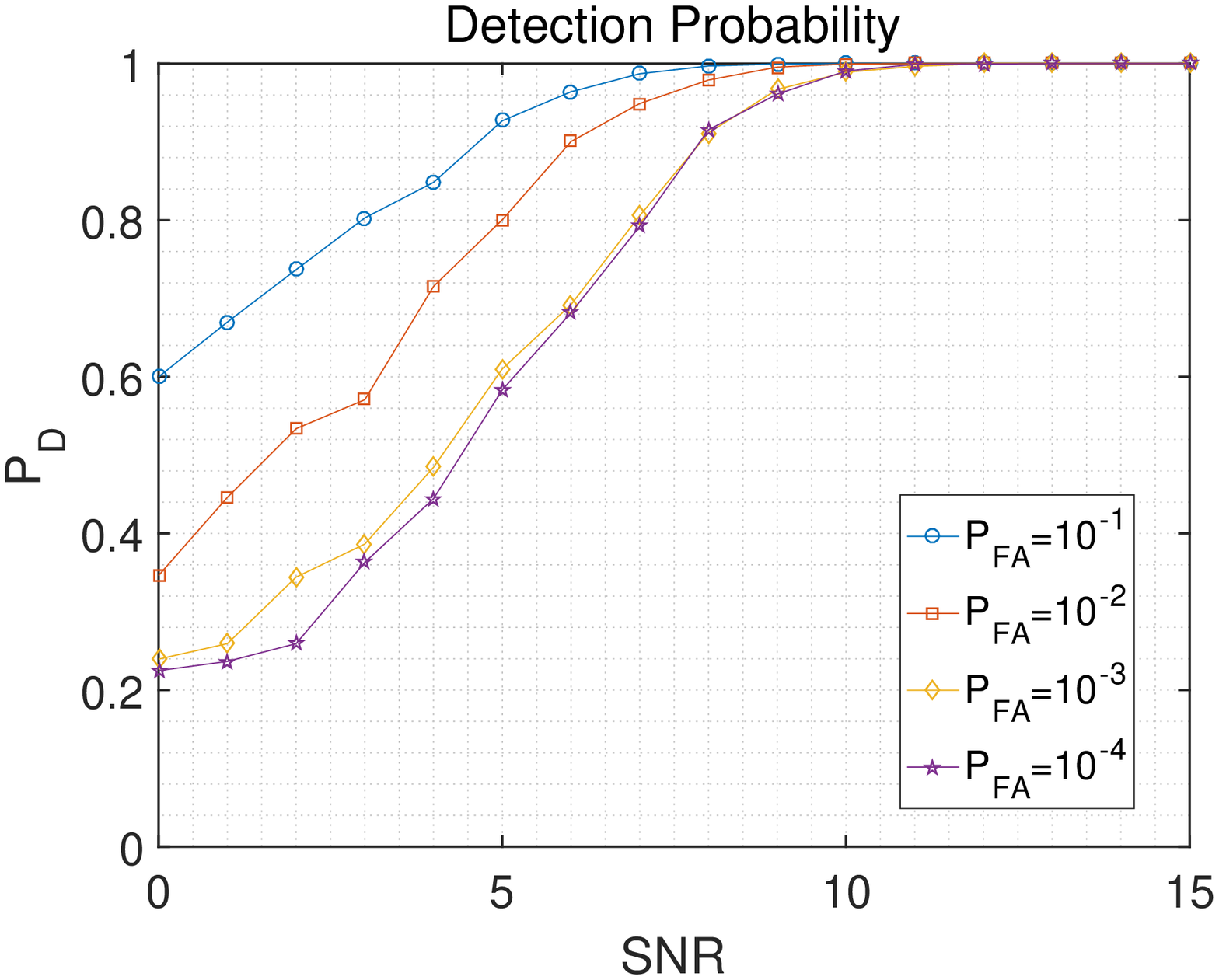}}
  \centerline{(a) tubal-sampling}\medskip
\end{minipage}
\hfill
\begin{minipage}[t]{0.48\linewidth}
  \centering
  \centerline{\includegraphics[width=8cm]{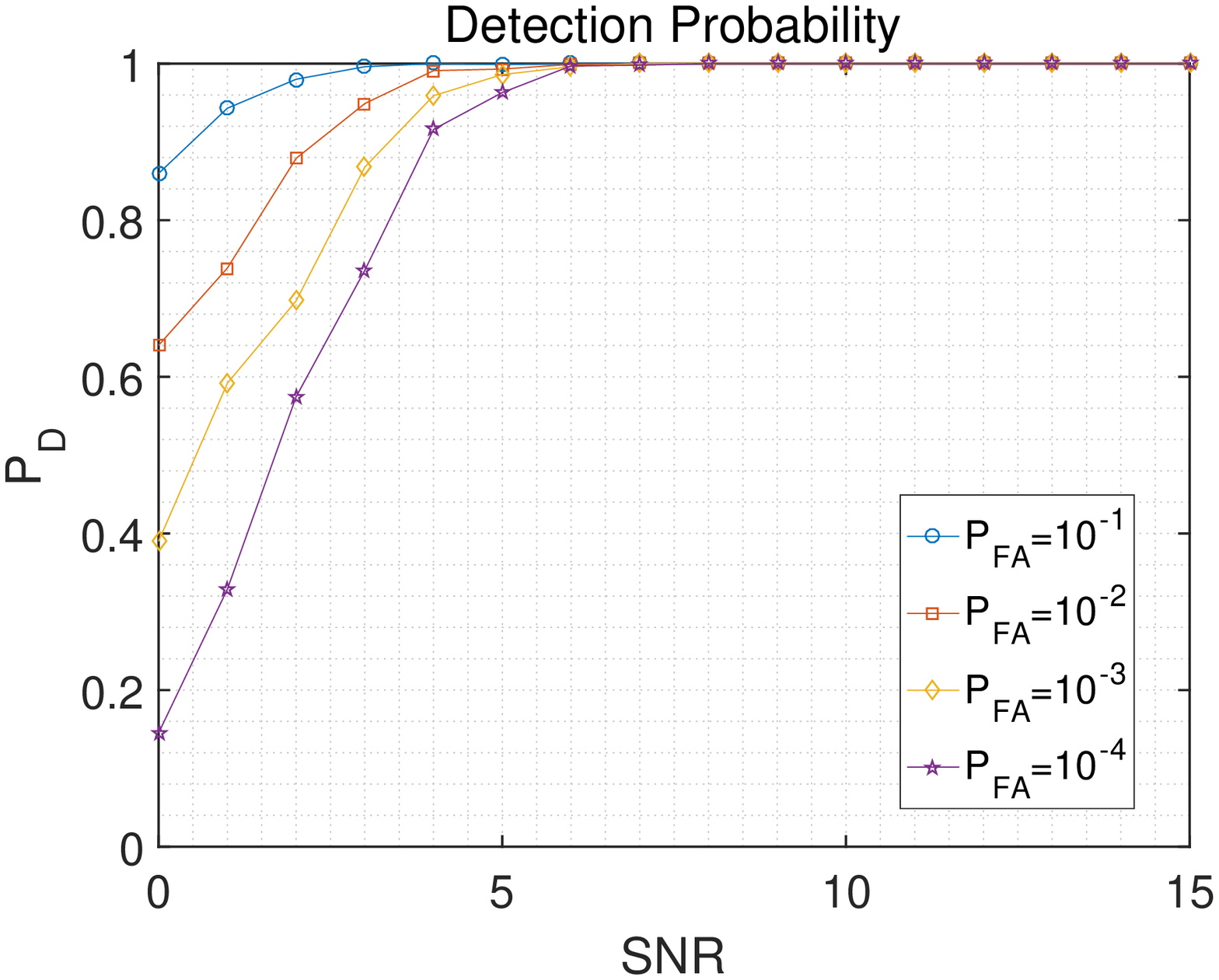}}
  \centerline{(b) elementwise-sampling}\medskip
\end{minipage}
\caption{The probability of detection based on DCT with different probabilities of false alarm. The sampling size $m=11$ for tubal-sampling, and $m=11\times 50$ for elementwise-sampling.}
\label{fig:detdct}
\end{figure}

\begin{figure}[t]

\begin{minipage}[b]{1\linewidth}
  \centering
  \centerline{\includegraphics[width=8cm]{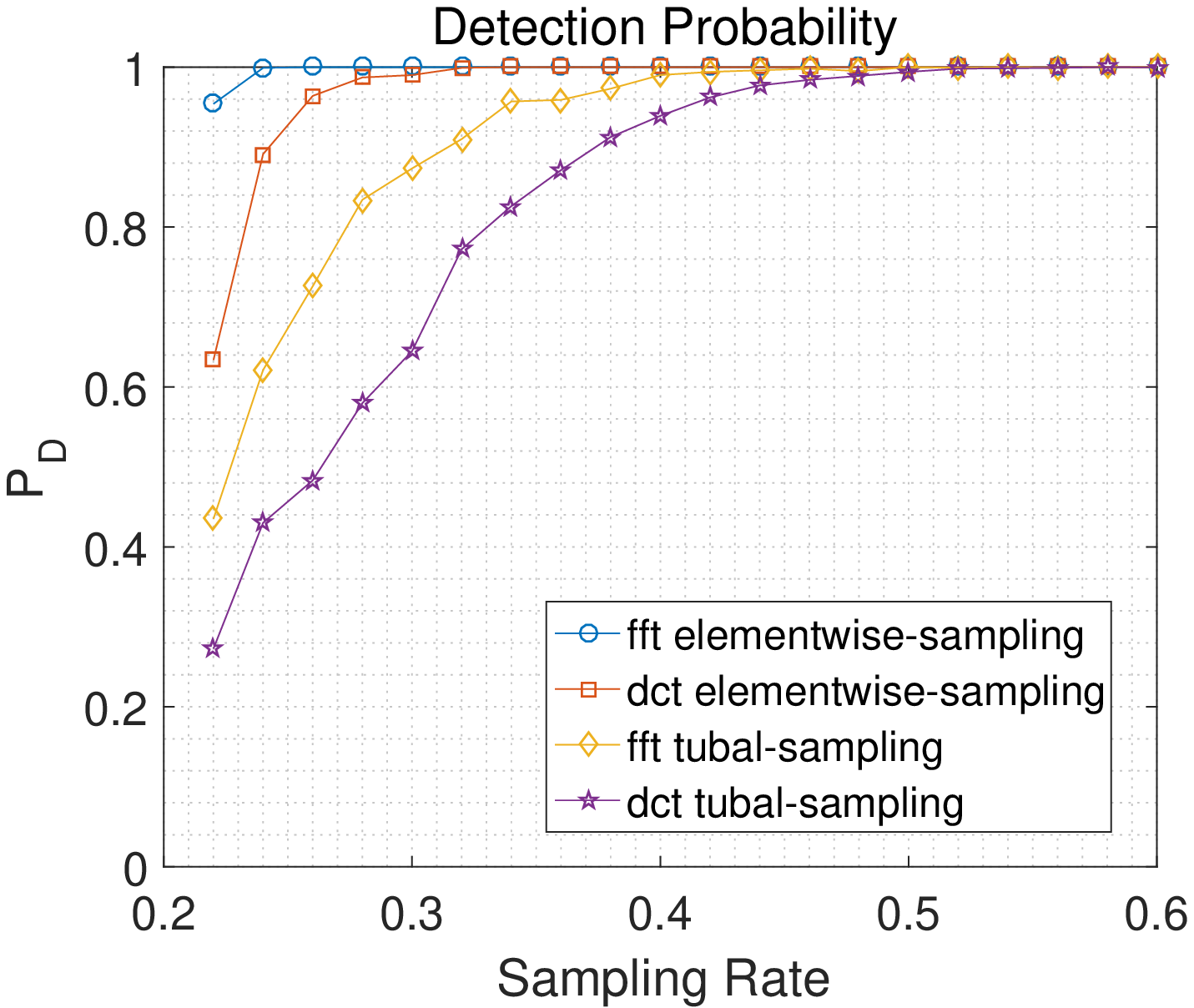}}
\end{minipage}
\caption{The probability of detection under different sampling rates with $\text{SNR}=0 $ and $P_{FA}=10^{-2} $.}
\label{fig:vs}
\end{figure}

Fig. \ref{fig:detfft}, \ref{fig:detdct}, and \ref{fig:vs} plots the detection probabilities under different conditions for fixed subspace $\mathcal{S}$ but different signal $\mathcal{T}$ based on DFT and DCT. With the sample sizes of $m=11$ for tubal-sampling and $m=11\times 50 $ for elementwise-sampling, Fig. \ref{fig:detfft} and \ref{fig:detdct} show the detection performance of our detector under different SNR and the different false alarm probabilities ($P_{FA} $), and Fig. \ref{fig:detfft} is based on DFT while Fig. \ref{fig:detdct} is based on DCT. From Fig. \ref{fig:detfft} and \ref{fig:detdct}, we find that the detection probability rises with the increase of the false alarm probability under the same SNR both for tubal-sampling and elementwise-sampling. Based on DFT,  the detection probability is approximate to $1$ when $\text{SNR}>8 $ for tubal-sampling and $\text{SNR}>5 $ for elementwise-sampling even $P_{FA}=10^{-4} $. Based on DCT,  the detection probability is approximate to $1$ when $\text{SNR}>10 $ for tubal-sampling and $\text{SNR}>6 $ for elementwise-sampling even $P_{FA}=10^{-4} $. Generally Speaking, under the same conditions, the performance of our detection based on DFT is superior to DCT, and elementwise-sampling is superior to tubal-sampling.

Fig. \ref{fig:vs} shows the comparison of the detection probability with $\text{SNR}= 0$ and $P_{FA}=10^{-2} $ under different sampling rate (the sampling rate is $\frac{m}{n_1}$ for tubal-sampling and $\frac{m}{n_1n_3}$ for elementwise-sampling) based on DFT and DCT. There are 4 detections: the detection with tubal-sampling based on DFT, the detection with elementwise-sampling based on DFT, the detection with tubal-sampling based on DCT, the detection with elementwise-sampling based on DCT. Among these detections, the detection with elementwise-sampling based on DFT has the best performance, of which the detection probability is approximate to $1$ as long as sampling rate $\frac{m}{n_1n_3}\geq 0.24 $.

\section{Conclusion}
\label{sec:conclusion}

 In this paper, we have proposed an approach for tensor matched subspace detection based on the transformed tensor model. Energy estimators under tubal-sampling and elementwise-sampling have been given, which estimator the energy of a signal outside a given subspace from sampling data. The bounds of energy estimators have been given, which have proved that it is possible  to detect whether a highly incomplete tensor belongs to a subspace when the number of samples is slightly greater than $r$ for tubal-sampling while $r\times n_3$ for elementwise-sampling. Matched subspace detections both for noiseless data and noisy data have been given. Moreover, simulations with synthetic data based on DFT and DCT have been given, which evaluate the performance of our estimators and detectors.

\bibliographystyle{IEEEbib}
\bibliography{tensor-subspace-detection}

\appendix[proof of lemmas]
The following three versions of Bernstein's inequalities are needed in the proofs of our Lemmas.

\begin{lemma}\label{lem:scar} [Scalar Version \cite{rf2016}] Let $x_1,\ldots,x_m$ be independent zero-mean scalar variables. Suppose $\rho_k=\mathbb{E}[x_k^2]$ and $|x_k|\leq M$ almost surely for all $k$. Then for any $\tau>0$,
\begin{equation}
  \mathbb{P}\left[\sum\limits_{k=1}^mx_k\geq\tau\right]\leq \exp\left(\frac{-\tau^2/2}{\sum\limits_{k=1}^m\rho_k^2+M\tau/3}\right).
\end{equation}
\end{lemma}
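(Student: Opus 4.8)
The plan is to prove this bound by the classical exponential-moment (Chernoff) method, with the free parameter optimized at the very end. First I would fix $\lambda>0$ and apply Markov's inequality to the nonnegative random variable $\exp(\lambda\sum_k x_k)$, which gives
\begin{equation*}
\mathbb{P}\left[\sum_{k=1}^m x_k\geq\tau\right]\leq e^{-\lambda\tau}\,\mathbb{E}\left[\exp\Bigl(\lambda\sum_{k=1}^m x_k\Bigr)\right].
\end{equation*}
Since the $x_k$ are independent, the expectation factorizes as $\prod_{k=1}^m \mathbb{E}[e^{\lambda x_k}]$, reducing the problem to controlling the moment generating function of each individual summand.

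The core step---and the one I expect to be the main obstacle---is bounding $\mathbb{E}[e^{\lambda x_k}]$ uniformly in terms of the variance proxy $\rho_k^2$ and the almost-sure bound $M$. I would expand the exponential in a Taylor series and use the zero-mean hypothesis to kill the linear term:
\begin{equation*}
\mathbb{E}[e^{\lambda x_k}]=1+\sum_{j\geq 2}\frac{\lambda^j\,\mathbb{E}[x_k^j]}{j!}.
\end{equation*}
For $j\geq 2$ the moments obey $\mathbb{E}[x_k^j]\leq M^{j-2}\mathbb{E}[x_k^2]=M^{j-2}\rho_k^2$ because $|x_k|\leq M$ almost surely. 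Combining this with the elementary factorial estimate $j!\geq 2\cdot 3^{j-2}$ and summing the resulting geometric series (convergent precisely when $M\lambda/3<1$) yields
\begin{equation*}
\mathbb{E}[e^{\lambda x_k}]\leq 1+\frac{\lambda^2\rho_k^2/2}{1-M\lambda/3}\leq \exp\!\left(\frac{\lambda^2\rho_k^2/2}{1-M\lambda/3}\right),
\end{equation*}
where the last step is just $1+u\leq e^u$.

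Multiplying these per-variable bounds and reinserting into the Chernoff estimate gives
\begin{equation*}
\mathbb{P}\left[\sum_{k=1}^m x_k\geq\tau\right]\leq\exp\!\left(-\lambda\tau+\frac{\lambda^2\sum_k\rho_k^2/2}{1-M\lambda/3}\right).
\end{equation*}
Finally I would select $\lambda=\tau/\bigl(\sum_k\rho_k^2+M\tau/3\bigr)$. A direct substitution shows $1-M\lambda/3=\sum_k\rho_k^2\big/\bigl(\sum_k\rho_k^2+M\tau/3\bigr)$, so the second term in the exponent reduces to $\lambda\tau/2$ and the whole exponent collapses to $-\frac{\tau^2/2}{\sum_k\rho_k^2+M\tau/3}$, which is exactly the claimed bound. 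The two points requiring a quick check are that this $\lambda$ indeed satisfies $M\lambda/3<1$ (immediate, since $\sum_k\rho_k^2>0$ forces $M\tau/3<\sum_k\rho_k^2+M\tau/3$) and the factorial inequality $j!\geq 2\cdot 3^{j-2}$; both are routine, so essentially all of the substance lies in the single-variable moment generating function estimate.
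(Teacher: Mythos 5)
Your proof is correct and complete, but there is nothing in the paper to compare it against: the paper does not prove this lemma at all --- it imports it from \cite{rf2016} as a known tool, and its appendix only proves the six lemmas that invoke it. What you give is the standard Chernoff / moment-generating-function proof of Bernstein's inequality for bounded variables, and every step checks out: Markov's inequality plus independence, the moment bound $\mathbb{E}[x_k^j]\le M^{j-2}\,\mathbb{E}[x_k^2]$, the factorial estimate $j!\ge 2\cdot 3^{j-2}$ (valid for all $j\ge 2$ by induction), the geometric summation under the constraint $M\lambda/3<1$, and the closing choice $\lambda=\tau/\bigl(\sum_k\rho_k^2+M\tau/3\bigr)$, which indeed collapses the exponent to $-\frac{\tau^2/2}{\sum_k\rho_k^2+M\tau/3}$. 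Two small remarks. First, the lemma as printed defines $\rho_k=\mathbb{E}[x_k^2]$ yet writes $\sum_k\rho_k^2$ in the denominator; read literally this makes the claim false (squaring a variance smaller than one would strengthen the bound beyond what Bernstein's inequality gives), and your proof silently adopts the intended convention $\rho_k^2=\mathbb{E}[x_k^2]$ --- which is also how the paper itself uses the lemma, e.g.\ in the proof of its Lemma 1, where $\rho_i^2$ is set equal to the second-moment bound. Second, your final check that $M\lambda/3<1$ assumes $\sum_k\rho_k^2>0$; in the degenerate case $\sum_k\rho_k^2=0$ every $x_k$ vanishes almost surely and the inequality holds trivially, so adding that one-line remark makes the argument airtight.
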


\begin{lemma}\label{lem:vec} [Vector Version \cite{rf2016}] Let $\bm{x}_1,\ldots,\bm{x}_m$ be independent zero-mean random vectors with $\sum_{k=1}^m\mathbb{E}\|\bm{x}_k\|_2^2\leq M $. Then for any $\tau\leq M\left(max_i\|\bm{x}_i\|_2\right)^{-1} $,
\begin{equation}
  \mathbb{P}\left[\left\|\sum\limits_{k=1}^m\bm{x}_k\right\|_2\geq\sqrt{ M}+\tau\right]\leq \exp\left(\frac{-\tau^2}{4 M}\right).
\end{equation}
\end{lemma}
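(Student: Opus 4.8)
The plan is to prove Lemma~\ref{lem:vec} by splitting the random variable $Z:=\left\|\sum_{k=1}^m \bm{x}_k\right\|_2$ into its mean and its fluctuation: the term $\sqrt{M}$ on the right-hand side will be supplied by an a priori bound $\mathbb{E}[Z]\leq\sqrt{M}$, while the additive $\tau$ together with the Gaussian-type tail $\exp(-\tau^2/(4M))$ will come from concentration of $Z$ about $\mathbb{E}[Z]$. Since $\{Z\geq\sqrt{M}+\tau\}\subseteq\{Z-\mathbb{E}[Z]\geq\tau\}$, it suffices to establish $\mathbb{E}[Z]\leq\sqrt{M}$ and $\mathbb{P}[\,Z-\mathbb{E}[Z]\geq\tau\,]\leq\exp(-\tau^2/(4M))$.

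First I would bound the expectation. Because the $\bm{x}_k$ are independent and zero-mean, distinct summands are uncorrelated, so
\begin{equation*}
  \mathbb{E}\left\|\sum_{k=1}^m \bm{x}_k\right\|_2^2 = \sum_{k=1}^m \mathbb{E}\|\bm{x}_k\|_2^2 + \sum_{j\neq k}\langle \mathbb{E}\bm{x}_j,\mathbb{E}\bm{x}_k\rangle = \sum_{k=1}^m \mathbb{E}\|\bm{x}_k\|_2^2 \leq M.
\end{equation*}
Jensen's inequality then gives $\mathbb{E}[Z]\leq\sqrt{\mathbb{E}[Z^2]}\leq\sqrt{M}$, which is the routine half of the argument.

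For the concentration half I would introduce the Doob martingale $Z_k=\mathbb{E}[Z\mid \bm{x}_1,\ldots,\bm{x}_k]$, so that $Z_0=\mathbb{E}[Z]$, $Z_m=Z$, with increments $d_k=Z_k-Z_{k-1}$ and filtration $\mathcal{F}_k=\sigma(\bm{x}_1,\ldots,\bm{x}_k)$. Coupling $\bm{x}_k$ with an independent copy $\tilde{\bm{x}}_k$ and using that $\bm{v}\mapsto\|\bm{v}\|_2$ is $1$-Lipschitz shows $|d_k|\leq \mathbb{E}[\|\bm{x}_k-\tilde{\bm{x}}_k\|_2\mid\mathcal{F}_k]\leq 2\max_i\|\bm{x}_i\|_2$ and $\sum_k\mathbb{E}[d_k^2\mid\mathcal{F}_{k-1}]\leq C\sum_k\mathbb{E}\|\bm{x}_k\|_2^2\leq CM$ for an absolute constant $C$. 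Applying the scalar Bernstein bound of Lemma~\ref{lem:scar} conditionally to each increment (that is, bounding the conditional moment generating function of $d_k$ given $\mathcal{F}_{k-1}$ and iterating, which is the martingale form of Bernstein's inequality) yields a tail of the form $\exp\!\left(-\tau^2/\big(2(CM+\tau\max_i\|\bm{x}_i\|_2/3)\big)\right)$. The hypothesis $\tau\leq M(\max_i\|\bm{x}_i\|_2)^{-1}$ is exactly what forces the linear term $\tau\max_i\|\bm{x}_i\|_2/3$ in the denominator to be dominated by $M$, collapsing the denominator to a constant multiple of $M$ and producing the exponent $-\tau^2/(4M)$.

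The main obstacle is this concentration step: one must make the increment estimate rigorous when the norms $\|\bm{x}_k\|_2$ are random rather than uniformly bounded, verifying that the coupling/Lipschitz argument simultaneously controls both $|d_k|$ and the conditional second moments, and then tracking the numerical constants through the martingale Bernstein computation carefully enough to land on the stated factor $4M$ under the given range of $\tau$ (the crude bookkeeping above in fact yields a slightly smaller exponent, so the claimed bound follows, but the constant $C$ must be pinned down). By contrast, the expectation bound and the reduction to a mean-deviation event are elementary; essentially all of the work sits in the conditional moment generating function estimate.
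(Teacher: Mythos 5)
The paper contains no proof of this lemma for you to be compared against: it is imported verbatim from \cite{rf2016} as a known result (a vector Bernstein inequality going back to Yurinsky, also stated by Gross), and the appendix only proves the paper's six tensor lemmas \emph{from} it. So your attempt has to stand on its own. Its architecture is the standard one and is sound in outline: $\mathbb{E}[Z^2]=\sum_k\mathbb{E}\|\bm{x}_k\|_2^2\le M$ by independence and zero mean, hence $\mathbb{E}[Z]\le\sqrt{M}$ by Jensen; the inclusion $\{Z\ge\sqrt{M}+\tau\}\subseteq\{Z-\mathbb{E}[Z]\ge\tau\}$; and then a Freedman-type (martingale Bernstein) bound on the Doob martingale of $Z$, reading $\max_i\|\bm{x}_i\|_2$ as an almost-sure bound $B$. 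The first two steps are correct as you state them.

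The genuine gap is exactly the constant-tracking step you flag and then wave away. Your coupling bounds give $|d_k|\le\|\bm{x}_k\|_2+\mathbb{E}\|\bm{x}_k\|_2\le 2B$ and, via Jensen on the coupled difference, $\mathbb{E}[d_k^2\mid\mathcal{F}_{k-1}]\le\mathbb{E}\|\bm{x}_k-\tilde{\bm{x}}_k\|_2^2=2\mathbb{E}\|\bm{x}_k\|_2^2$, i.e.\ $C=2$. Feeding $V\le 2M$ and $R\le 2B$ into Freedman's inequality and using $B\tau\le M$ yields
\begin{equation*}
\mathbb{P}\left[Z-\mathbb{E}[Z]\ge\tau\right]\le \exp\left(\frac{-\tau^2/2}{2M+\tfrac{2}{3}B\tau}\right)\le \exp\left(-\frac{3\tau^2}{16M}\right),
\end{equation*}
and since $3/16<1/4$ this is strictly \emph{weaker} than the claimed $\exp(-\tau^2/(4M))$. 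Your parenthetical claim that the crude bookkeeping ``yields a slightly smaller exponent, so the claimed bound follows'' is backwards: a smaller-magnitude exponent is a weaker tail estimate and implies nothing about the stated one. The repair stays within your framework but needs a sharper variance bound. Conditionally on $\mathcal{F}_{k-1}$, $d_k=h(\bm{x}_k)-\mathbb{E}_{\tilde{\bm{x}}_k}[h(\tilde{\bm{x}}_k)]$ for a $1$-Lipschitz function $h$, so $\mathbb{E}[d_k^2\mid\mathcal{F}_{k-1}]$ is a conditional variance and the identity $\mathrm{Var}(W)=\tfrac{1}{2}\mathbb{E}[(W-W')^2]$ (with $W'$ an independent copy) gives $\mathbb{E}[d_k^2\mid\mathcal{F}_{k-1}]=\tfrac{1}{2}\mathbb{E}\left[(h(\bm{x}_k)-h(\tilde{\bm{x}}_k))^2\mid\mathcal{F}_{k-1}\right]\le\tfrac{1}{2}\mathbb{E}\|\bm{x}_k-\tilde{\bm{x}}_k\|_2^2=\mathbb{E}\|\bm{x}_k\|_2^2$, i.e.\ $C=1$. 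With $V\le M$, $R\le 2B$, and $B\tau\le M$, Freedman then gives $\exp\left(\frac{-\tau^2/2}{M+\tfrac{2}{3}M}\right)=\exp\left(-\frac{3\tau^2}{10M}\right)\le\exp\left(-\frac{\tau^2}{4M}\right)$, which proves the lemma. As written, with $C$ left floating and your own estimate producing $C=2$, the argument does not establish the stated inequality.
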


\begin{lemma}\label{lem:mat} [Matrix Version \cite{Balzano2010}] Let $\bm{X}_1,\ldots,\bm{X}_m$ be independent zero-mean square $r\times r$ random matrices. Suppose $\rho_k=\max\{\|\mathbb{E}[\bm{X}_k\bm{X}_k^{H}]\|,\|\mathbb{E}[\bm{X}_k^{H} \bm{X}_k]\|\}$ and $\|\bm{X}_k\|\leq M$ almost surely for all $k$. Then for any $\tau>0$,
\begin{equation}
  \mathbb{P}\left[\left\|\sum\limits_{k=1}^m\bm{X}_k\right\|>\tau\right]\leq 2r\exp\left(\frac{-\tau^2/2}{\sum\limits_{k=1}^m\rho_k^2+M\tau/3}\right).
\end{equation}
\end{lemma}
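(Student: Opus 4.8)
The plan is to follow the matrix Laplace-transform (Chernoff) method: reduce the general, possibly non-self-adjoint case to a Hermitian one, and then control the matrix moment generating function term by term, the non-commutativity being absorbed by a trace inequality.

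First I would pass to the Hermitian dilation. Since the $\bm{X}_k$ need not be self-adjoint, define the $2r\times 2r$ Hermitian matrices $\bm{Y}_k=\begin{bmatrix}\bm{0} & \bm{X}_k \\ \bm{X}_k^H & \bm{0}\end{bmatrix}$. These are independent and zero-mean, and satisfy $\|\bm{Y}_k\|=\|\bm{X}_k\|\leq M$, $\lambda_{\max}(\sum_k \bm{Y}_k)=\|\sum_k \bm{X}_k\|$, and $\bm{Y}_k^2=\operatorname{diag}(\bm{X}_k\bm{X}_k^H,\,\bm{X}_k^H\bm{X}_k)$, so that $\|\mathbb{E}[\bm{Y}_k^2]\|=\rho_k$. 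This converts the operator-norm deviation of the rectangular sum into a largest-eigenvalue deviation of a self-adjoint sum, which is exactly the object the Laplace-transform method controls.

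Next, for any $\theta>0$ I would apply the matrix Chernoff bound $\mathbb{P}[\lambda_{\max}(\sum_k \bm{Y}_k)\geq\tau]\leq e^{-\theta\tau}\,\mathbb{E}\operatorname{trace}\exp(\theta\sum_k\bm{Y}_k)$, and then invoke the subadditivity of matrix cumulant generating functions (a consequence of Lieb's concavity theorem) to pull the sum out of the expectation: $\mathbb{E}\operatorname{trace}\exp(\theta\sum_k\bm{Y}_k)\leq\operatorname{trace}\exp(\sum_k\log\mathbb{E}\,e^{\theta\bm{Y}_k})$. For each term I would use the semidefinite estimate $\mathbb{E}\,e^{\theta\bm{Y}_k}\preceq\exp\left(\frac{\theta^2/2}{1-M\theta/3}\,\mathbb{E}[\bm{Y}_k^2]\right)$, valid for $0<\theta<3/M$, which follows from the scalar bound $e^z\leq 1+z+\frac{z^2/2}{1-|z|/3}$ transferred to the spectrum of $\bm{Y}_k$, together with $\log(\bm{I}+\bm{A})\preceq\bm{A}$ and $\mathbb{E}\bm{Y}_k=\bm{0}$. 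Combining these gives $\sum_k\log\mathbb{E}\,e^{\theta\bm{Y}_k}\preceq g(\theta)\sum_k\mathbb{E}[\bm{Y}_k^2]$ with $g(\theta)=\frac{\theta^2/2}{1-M\theta/3}$, so monotonicity of the trace-exponential yields $\operatorname{trace}\exp(\sum_k\log\mathbb{E}\,e^{\theta\bm{Y}_k})\leq 2r\,\exp(g(\theta)\,\|\sum_k\mathbb{E}[\bm{Y}_k^2]\|)$; the variance proxy is then bounded by $\|\sum_k\mathbb{E}[\bm{Y}_k^2]\|\leq\sum_k\rho_k$. The factor $2r$ is precisely the dimension of the dilation and is the source of the $2r$ prefactor in the claim. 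Optimizing $2r\,\exp(-\theta\tau+g(\theta)\sum_k\rho_k)$ over $\theta\in(0,3/M)$ — the optimal choice being $\theta=\tau/(\sum_k\rho_k+M\tau/3)$ — reproduces the stated exponent, and a union over the upper and lower tails gives the two-sided operator-norm statement.

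The main obstacle is the non-commutativity: unlike the scalar situation, $\mathbb{E}\,e^{\theta\sum_k\bm{Y}_k}$ does not factor across $k$, so the whole argument hinges on the trace inequality furnished by Lieb's concavity theorem to justify the cumulant-subadditivity step, and on establishing the one-matrix semidefinite mgf bound above. If one wished to sidestep Lieb's theorem, an alternative is a covering-net argument: express $\|\sum_k\bm{X}_k\|$ as a supremum of the scalar forms $\sum_k\bm{u}^H\bm{X}_k\bm{v}$ over unit vectors, apply the scalar Bernstein inequality (Lemma~\ref{lem:scar}) for each fixed pair on an $\epsilon$-net, and take a union bound. This is more elementary but inflates the prefactor to $C^{r}$ rather than the sharp $2r$, so I would prefer the Laplace-transform route in order to match the constant in the stated bound.
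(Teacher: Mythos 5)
The paper itself offers no proof of this lemma: it is imported as a black box, by citation to \cite{Balzano2010}, where it in turn is quoted from the matrix-completion literature (the noncommutative Bernstein inequality, classically proved Ahlswede--Winter style via the Golden--Thompson inequality). So there is nothing in the paper to compare against line by line; what you have written is a self-contained proof, and in substance it is correct. Your route is Tropp's modern refinement of that classical argument: Hermitian dilation to reduce the non-self-adjoint case to a largest-eigenvalue deviation, the Laplace-transform master bound, subadditivity of matrix cumulant generating functions via Lieb's concavity theorem, the mgf estimate $\mathbb{E}\,e^{\theta\bm{Y}_k}\preceq\exp\bigl(\tfrac{\theta^2/2}{1-M\theta/3}\,\mathbb{E}[\bm{Y}_k^2]\bigr)$, and the choice $\theta=\tau/(\sum_k\rho_k+M\tau/3)$, which indeed reproduces the stated exponent; the dilation identities you list ($\|\bm{Y}_k\|=\|\bm{X}_k\|$, $\bm{Y}_k^2=\mathrm{diag}(\bm{X}_k\bm{X}_k^H,\bm{X}_k^H\bm{X}_k)$, hence $\|\mathbb{E}[\bm{Y}_k^2]\|=\rho_k$) are all right. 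Two corrections. First, your closing remark that ``a union over the upper and lower tails'' is still needed is mistaken, and if carried out it would inflate the prefactor to $4r$ rather than the claimed $2r$: the spectrum of the dilation is symmetric about zero, so $\lambda_{\max}\bigl(\sum_k\bm{Y}_k\bigr)=\bigl\|\sum_k\bm{X}_k\bigr\|$ exactly, and the single one-sided eigenvalue tail already controls the operator norm --- the factor $2r$ comes solely from the trace over the $2r$-dimensional dilation space, as you yourself correctly state earlier in the same paragraph. Second, the quantity $\sum_k\rho_k^2$ in the statement should be read as $\sum_k\rho_k$ with $\rho_k$ as defined (the superscript is a notational artifact carried over from the cited source, where $\rho_k^2$ denotes the variance proxy itself); your derivation naturally produces the bound with $\bigl\|\sum_k\mathbb{E}[\bm{Y}_k^2]\bigr\|\leq\sum_k\rho_k$, which is the intended --- in fact slightly stronger --- form of the variance term.
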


Based on the above Bernstein's inequalities, we can gain the proofs of our six central Lemmas.

\begin{proof}[Proof of Lemma \ref{tubal:lemma1}]
We use Bernstein's inequality of Lemma \ref{lem:scar} to prove Lemma \ref{tubal:lemma1}. Recall that $\Omega\subset [n_1] $ for tubal-sampling, and we assume the tubal samples are taken uniformly with replacement. For $\mathbb{E}\left[\left\|\mathcal{Y}_{\Omega}(i,:,:)\right\|_2^2\right]= \frac{1}{n_1}\sum\limits_{j=1}^{n_1}\left\|\mathcal{Y}(j,:,:)\right\|_2^2 =\frac{1}{n_1}\left\|\mathcal{Y}\right\|_F^2 $, we set $ x_i=\left\|\mathcal{Y}_{\Omega}(i,:,:)\right\|_2^2- \frac{1}{n_1}\left\|\mathcal{Y}\right\|_F^2$, such that $\mathbb{E}\left[x_i\right]=0$. Define $\bm{1}_\Omega $ as the indicator function, and we have
\begin{eqnarray*}
  \mathbb{E}\left[x_i^2\right] &=& \mathbb{E}\left[\left(\left\|\mathcal{Y}_{\Omega}(i,:,:)\right\|_2^2- \frac{1}{n_1}\left\|\mathcal{Y}\right\|_F^2\right)^2\right] \\
   &=&\mathbb{E}\left[\sum\limits_{j=1}^{n_1}\left\|\mathcal{Y}(j,:,:)\right\|_2^4 \bm{1}_{\Omega(i)=j}\right] -\frac{1}{n_1^2}\left\|\mathcal{Y}\right\|_F^4 \\
   &=&\frac{1}{n_1}\sum\limits_{j=1}^{n_1}\left\|\mathcal{Y}(j,:,:)\right\|_2^4 -\frac{1}{n_1^2}\left\|\mathcal{Y}\right\|_F^4  \\
   &\leq&\frac{1}{n_1}\left\|\mathcal{Y}\right\|_{\infty^*}^2\left\|\mathcal{Y}\right\|_F^2 -\frac{1}{n_1^2}\left\|\mathcal{Y}\right\|_F^4
\end{eqnarray*}
and
\begin{equation*}
  |x_i|\leq\left\|\mathcal{Y}\right\|_{\infty^*}^2-\frac{1}{n_1}\left\|\mathcal{Y}\right\|_F^2,
\end{equation*}
we set $\rho_i^2=\frac{1}{n_1}\left\|\mathcal{Y}\right\|_{\infty^*}^2\left\|\mathcal{Y}\right\|_F^2 -(\frac{1}{n_1})^2\left\|\mathcal{Y}\right\|_F^4 $ and $M=\left\|\mathcal{Y}\right\|_{\infty^*}^2-\frac{1}{n_1}\left\|\mathcal{Y}\right\|_F^2 $. Now we apply Bernstein's inequality of Lemma \ref{lem:scar}:
\begin{equation*}
   \mathbb{P}\left[\left|\sum\limits_{i=1}^mx_{i} \right|>\tau\right]\leq 2\exp\left(\frac{-\tau^2/2}{\sum_{i=1}^m\rho_{i}^2+M\tau/3}\right)
 \end{equation*}
 Let $\tau=\alpha\frac{m}{n_1}\left\|\mathcal{Y}\right\|_F^2 $, where $\alpha $ is defined in Theorem \ref{theo:tubal}, then we have
 \begin{equation*}
   (1-\alpha)\frac{m}{n_1}\|\mathcal{Y}\|_F^2\leq \left\|\mathcal{Y}_{\Omega}\right\|_F^2\leq (1+\alpha)\frac{m}{n_1}\|\mathcal{Y}\|_F^2
 \end{equation*} with the probability at least $1-2\delta $.
\end{proof}

\begin{proof}[Proof of lemma \ref{tubal:lemma2}]
Let $\bm{x}_i=\mathcal{U}^\dag(:,\Omega(i),:)\bullet\mathcal{Y}(\Omega(i),1,:) $. Since $\mathcal{Y}\in \mathcal{S}^\bot$ and the samples are taken uniformly with replacement, we have the followings.
\begin{eqnarray*}
  \mathbb{E}\left[\bm{x}_i\right] &=& \mathbb{E}\left[\sum\limits_{j=1}^{n_1}\mathcal{U}^\dag(:,j,:)\bullet\mathcal{Y}(j,1,:)\bm{1}_{\Omega(i)=j}\right] \\
   &=& \frac{1}{n_1}\sum\limits_{j=1}^{n_1}\mathcal{U}^\dag(:,j,:)\bullet\mathcal{Y}(j,1,:)=\bm{0}_{n_2\times 1\times n_3},
\end{eqnarray*}
\begin{eqnarray*}
  \sum\limits_{i=1}^{m}\mathbb{E}\left\|\bm{x}_i\right\|_2^2 &=& \sum\limits_{i=1}^{m}\mathbb{E}\left[\sum\limits_{j=1}^{n_1}\mathcal{U}^\dag(:,j,:)\bullet\mathcal{Y}(j,1,:)\bm{1}_{\Omega(i)=j}\right] \\
  &=& \frac{m}{n_1}\sum\limits_{j=1}^{n_1}\left\|\mathcal{U}^\dag(:,j,:)\bullet\mathcal{Y}(j,1,:)\right\|_F^2 \\
  &=&\frac{m}{n_1}\sum\limits_{j=1}^{n_1} \frac{1}{c^2}\left\|\overline{\mathcal{U}(:,j,:)}^T\overline{\mathcal{Y}(j,1,:)}\right\|_F^2   \\
   &\leq& \frac{m}{n_1}\frac{n_2\mu(\mathcal{S})}{n_1}c^2\left\|\mathcal{Y}\right\|_F^2\\
   &=&\frac{mn_2c^2\mu(\mathcal{S})}{n_1^2}\left\|\mathcal{Y}\right\|_F^2.
\end{eqnarray*}
We set $ M=\frac{mn_2c^2\mu(\mathcal{S})}{n_1^2}\left\|\mathcal{Y}\right\|_F^2$ and $\tau=\sqrt{4 M\ln(1/\delta)} $. When $m\geq 4\frac{n_1\|\mathcal{Y}\|_{\infty^*}^2}{\|\mathcal{Y}\|_F^2}\ln\left(\frac{1}{\delta}\right) $, $\sqrt{4 M\ln(1/\delta)}\leq M\left(\max\limits_i\|\bm{x}_i\|_2\right)^{-1} $. Applying Bernstein's inequality of Lemma \ref{lem:vec}, we have
\begin{eqnarray*}
  \left\|\mathcal{U}_{\Omega}^\dag\bullet\mathcal{Y}_{\Omega}\right\|_F^2 &\leq& (\sqrt{ M}+\sqrt{4 M\ln(1/\delta)})^2 \\
   &=& \beta\frac{mn_2c^2\mu(\mathcal{S})}{n_1^2}\left\|\mathcal{Y}\right\|_F^2
\end{eqnarray*}
with the probability at least $1-\delta $, where $\beta $ is defined in Theorem \ref{theo:tubal}.
\end{proof}

\begin{proof}[Proof of Lemma \ref{tubal:lemma3}]
We apply Bernstein's inequality of Lemma \ref{lem:mat} to prove Lemma \ref{tubal:lemma3}. Let $ \bm{X}_k=\overline{\mathcal{U}_{{\Omega}(k)}}^{H}\overline{\mathcal{U}_{{\Omega}(k)}} -\frac{1}{n_1}\bm{I}_{n_2n_3}$, where the notation $\mathcal{U}_{{\Omega}(k)} $ is the ${\Omega}(k) $th row of $\mathcal{U} $ and $\bm{I}_{n_2n_3}$ is the identity matrix of size $n_2n_3\times n_2n_3 $. Then the random variable $\bm{X}_k$ is zero mean. For ease of notation, we will denote $\overline{\mathcal{U}_{{\Omega}(k)}}$ as $\overline{\mathcal{U}_k}$.
Using the fact that for positive semi-defined matrices $\bm{A}$ and $ \bm{B}$, $\|\bm{A}-\bm{B}\|\leq \max\{\|\bm{A}\|,~\|\bm{B}\|\} $, and  recalling that $\|\overline{\mathcal{U}_k}\|_F^2= \left\|\mathcal{L}(\mathcal{U})(\Omega(k),:,:)\right\|_F^2 =\left\|\mathcal{L}(\mathcal{P})(:,\Omega(k),:)\right\|_F^2\leq c^2n_2\mu(\mathcal{S})/n_1$, we have
\begin{equation*}
  \left\|\overline{\mathcal{U}_{k}}^{H}\overline{\mathcal{U}_{k}} -\frac{1}{n_1}\bm{I}_{n_2n_3}\right\|
\leq\max\left\{\frac{c^2n_2\mu(\mathcal{S})}{n_1},\frac{1}{n_1}\right\}.
\end{equation*}
Let $M=\frac{c^2n_2\mu(\mathcal{S})}{n_1}$.
Next, we calculate $\|\mathbb{E}[\bm{X}_k\bm{X}_k^{H} ]\|_2$ and $\|\mathbb{E}[\bm{X}_k^{H}\bm{X}_k]\|_2$.
\begin{eqnarray*}
  \left\|\mathbb{E}\left[\bm{X}_k\bm{X}_k^{H} \right]\right\| &=& \left\|\mathbb{E}\left[\bm{X}_k^{H} \bm{X}_k\right]\right\| \\
   &=& \left\|\mathbb{E}\left[\left(\overline{\mathcal{U}_{k}}^{H}\overline{\mathcal{U}_{k}} -\frac{1}{n_1}\bm{I}_{n_2n_3}\right)^2\right]\right\| \\
   &=& \left\|\mathbb{E}\left[\overline{\mathcal{U}_{k}}^{H}\overline{\mathcal{U}_{k}}~ \overline{\mathcal{U}_{k}}^{H}\overline{\mathcal{U}_{k}} -\frac{2}{n_1}\overline{\mathcal{U}_{k}}^{H}\overline{\mathcal{U}_{k}} +\frac{1}{n_1^2}\bm{I}_{n_2n_3}\right]\right\| \\
   &=& \left\|\mathbb{E}\left[\overline{\mathcal{U}_{k}}^{H}\overline{\mathcal{U}_{k}} ~\overline{\mathcal{U}_{k}}^{H}\overline{\mathcal{U}_{k}}\right] -\frac{1}{n_1^2}\bm{I}_{n_2n_3}\right\|  \\
   &\leq & \max\left\{\left\|\mathbb{E}\left[\overline{\mathcal{U}_{k}}^{H}\overline{\mathcal{U}_{k}} ~\overline{\mathcal{U}_{k}}^{H}\overline{\mathcal{U}_{k}} \right]\right\|,\frac{1}{n_1^2}\right\} \\
   &\leq& \max\left\{c^2\frac{n_2\mu(\mathcal{S})}{n_1}\left\|\mathbb{E}\left[\overline{\mathcal{U}_{k}}^{H}\overline{\mathcal{U}_{k}} \right]\right\|_2,\frac{1}{n_1^2}\right\} \\
  &=& \max\left\{c^2\frac{n_2\mu(\mathcal{S})}{n_1^2}\left\|\bm{I}_{n_2n_3}\right\|,\frac{1}{n_1^2}\right\}\\
&=&\frac{c^2n_2\mu(\mathcal{S})}{n_1^2}.
\end{eqnarray*}
Let $\rho^2=\frac{c^2n_2\mu(\mathcal{S})}{n_1^2} $, and
by Bernstein Inequality of Lemma \ref{lem:mat}, we have
\begin{equation*}
  \mathbb{P} \left[\left\|\sum\limits_{i=1}^m\left(\overline{\mathcal{U}_{k}}^{H}\overline{\mathcal{U}_{k}} -\frac{1}{n_1}\bm{I}_{n_2n_3}\right)\right\|>\tau\right]
   \leq 2n_2n_3\exp\left(\frac{-\tau^2/2}{m\rho^2+M\tau/3}\right).
\end{equation*}
 We restrict $\tau$ to be $M\tau\leq m\rho^2$, then the equation can be simplified as
\begin{equation*}
  \mathbb{P} \left[\left\|\sum\limits_{i=1}^m\left(\overline{\mathcal{U}_{k}}^{H}\overline{\mathcal{U}_{k}} -\frac{1}{n_1}\bm{I}_{n_2n_3}\right)\right\|>\tau\right] \leq 2n_2n_3\exp\left(\frac{-3n_1^2\tau^2}{8mc^2n_2\mu(\mathcal{S})}\right).
\end{equation*}
Now set $\tau=\gamma m/n_1$ with $\gamma $ defined in the statement of Theorem \ref{theo:tubal}. Since $\gamma<1$ by assumption, $M\tau\leq m\rho^2$ holds. Then we have
\begin{equation*}
  \mathbb{P} \left[\left\|\sum\limits_{i=1}^m\left(\overline{\mathcal{U}_{k}}^{H}\overline{\mathcal{U}_{k}} -\frac{1}{n_1}\bm{I}_{n_2n_3}\right)\right\|\leq \frac{m}{n_1}\gamma\right] \geq 1-\delta .
\end{equation*}
We note that $\left\|\sum_{i=1}^m\left(\overline{\mathcal{U}_{k}}^{H}\overline{\mathcal{U}_{k}} -\frac{1}{n_1}\bm{I}_{n_2n_3}\right)\right\|_2\leq \frac{m}{n_1}\gamma$ implies that the minimum singular value of $\sum_{i=1}^m\left(\overline{\mathcal{U}_{k}}^{H}\overline{\mathcal{U}_{k}}\right)$ is at least $(1-\gamma)\frac{m}{n_1}$. This in turn implies that
\begin{equation*}
  \left\|\left(\sum\limits_{i=1}^m\overline{\mathcal{U}_{k}}^{H}\overline{\mathcal{U}_{k}} \right)^{-1}\right\|\leq \frac{n_1}{(1-\gamma)m}.
\end{equation*}
That means that
\begin{equation*}
 \left\|\left(\overline{\mathcal{U}_{\Omega}}^{H}\overline{\mathcal{U}_{\Omega}} \right)^{-1}\right\|\leq \frac{n_1}{(1-\gamma)m}
\end{equation*}
holds with the probability at least $1-\delta$.
\end{proof}

\begin{proof}[Proof of Lemma \ref{element:lemma2}]
We use Bernstein's inequality of Lemma \ref{lem:vec} to prove Lemma \ref{element:lemma2}. For elementwise-sampling, $\Omega\subset [n_1]\times [n_3] $. Then we set $\bm{x}_{k}=\bm{U}^H\left(:,(j-1)n_1+i\right)\bm{y}_{(j-1)n_1+i}\bm{1}_{\Omega(k)=(i,j)} $. Since $\bm{y}\in S^\bot $ and the samples are taken with replacement, we have the followings.
\begin{eqnarray*}
  \mathbb{E}\left[\bm{x}_{k}\right] &=& \mathbb{E}\left[\sum\limits_{p=1}^{n_1}\sum\limits_{q=1}^{n_3}\bm{U}^H\left(:,(q-1)n_1+p\right)\bm{y}_{(q-1)n_1+p}\bm{1}_{\Omega(k)=(p,q)}\right] \\
   &=& \frac{1}{n_1n_3}\sum\limits_{p=1}^{n_1}\sum\limits_{q=1}^{n_3}\bm{U}^H\left(:,(q-1)n_1+p\right)\bm{y}_{(q-1)n_1+p}=\bm{0}_{n_2n_3\times 1},
\end{eqnarray*}
\begin{eqnarray*}
  \sum\limits_{k=1}^m\mathbb{E}\left\|\bm{x}_{k}\right\|_2^2 &=& \sum\limits_{k=1}^m\mathbb{E}\left[\sum\limits_{p=1}^{n_1}\sum\limits_{q=1}^{n_3}\bm{U}^H\left(:,(q-1)n_1+p\right)\bm{y}_{(q-1)n_1+p}\bm{1}_{\Omega(k)=(p,q)}\right]\\
  &=& \frac{m}{n_1n_3}\sum\limits_{p=1}^{n_1}\sum\limits_{q=1}^{n_3}\|\bm{U}^H\left(:,(q-1)n_1+p\right)\bm{y}_{(q-1)n_1+p}\|_2^2 \\
   &=& \frac{m}{n_1n_3}\sum\limits_{p=1}^{n_1}\sum\limits_{q=1}^{n_3}\|\bm{U}^H\left(:,(q-1)n_1+p\right)\|_2^2\bm{y}_{(q-1)n_1+p}^2 \\
   &\leq& \frac{m}{n_1n_3}\frac{n_2\mu(\bm{S})}{n_1}\|\bm{y}\|_2^2 \\
   &=& \frac{mn_2\mu(\bm{S})}{n_1^2n_3}\|\bm{y}\|_2^2,
\end{eqnarray*}
We set $ M=\frac{mn_2\mu(\bm{S})}{n_1^2n_3}\|\bm{y}\|_2^2$ and $\tau=\sqrt{4 M\ln(1/\delta)} $. When $m\geq 4\frac{n_1n_3\|\bm{y}\|_{\infty}^2}{\|\bm{y}\|_2^2}\ln\left(\frac{1}{\delta}\right) $, $\sqrt{4 M\ln(1/\delta)}\leq M\left(\max\limits_{k}\|\bm{x}_{k}\|_2\right)^{-1} $. Then we have
\begin{eqnarray*}
  \left\|\bm{U}_{\Omega}\bm{y}_{\Omega}\right\|_2^2 &\leq& \left(\sqrt{ M}+\sqrt{4 M\ln(1/\delta)}\right)^2 \\
   &=& \left(1+2\sqrt{ln(1/\delta)}\right)^2 \frac{mn_2\mu(\bm{S})}{n_1^2n_3}\|\bm{y}\|_2^2 \\
   &=& \beta\frac{mn_2\mu(\bm{S})}{n_1^2n_3}\|\bm{y}\|_2^2
\end{eqnarray*}
with the probability at least $1-\delta$, where $\beta$ is defined in Theorem \ref{theo:element}.
\end{proof}

\end{document}